\newcommand{\ifarticle}[2]{
    \csname@ifclassloaded\endcsname{beamer}{#2}{#1}
}
\newcommand{\ifbook}[2]{
    \csname@ifclassloaded\endcsname{amsbook}{#1}{#2}
}
        \setlist{topsep=2pt,itemsep=2pt,partopsep=2pt,parsep=2pt} 
        \xpretocmd{\@adminfootnotes}{\let\@makefntext\BHFN@OldMakefntext}{}{}
        \renewcommand\@makefntext[1]{%
        \@ifundefined{@makefnmark}
            {}
            {%
            \renewcommand\@makefnmark{%
            \mbox{%
                \textsuperscript{%
                \normalfont
                \hyperref[\BackrefFootnoteTag]{\@thefnmark}%
                }%
            }\,%
            }%
            \BHFN@OldMakefntext{#1}%
        }%
        }
        \LetLtxMacro{\BHFN@Old@footnotemark}{\@footnotemark}
        \renewcommand*{\@footnotemark}{%
            \refstepcounter{BackrefHyperFootnoteCounter}%
            \xdef\BackrefFootnoteTag{bhfn:\theBackrefHyperFootnoteCounter}%
            \label{\BackrefFootnoteTag}%
            \BHFN@Old@footnotemark
        }
        \def\paragraph{\@startsection{paragraph}{4}%
          \z@\z@{-\fontdimen2\font}%
          {\normalfont\bfseries}}
        \theoremstyle{plain}
            \newtheorem{theorem}{Theorem}[chapter]
            \newtheorem{theorem}{Theorem}[section]
        \newtheorem{proposition}[theorem]{Proposition}
        \newtheorem{lemma}[theorem]{Lemma}
        \newtheorem{corollary}[theorem]{Corollary}
        \newtheorem*{theorem*}{Theorem}
        \newtheorem*{corollary*}{Corollary}
        \theoremstyle{definition}
        \newtheorem{definition}[theorem]{Definition}
        \newtheorem{example}[theorem]{Example}
        \newtheorem{notation}[theorem]{Notation}
        \newtheorem{remark}[theorem]{Remark}
        \newenvironment{sketch}{\proof}{\endproof}
        \Crefname{theoremenumi}{Theorem}{Theorems}
            \setlist[enumerate,1]{
                ref={\csname thetheorem\endcsname.(\arabic*)}
            }%
            \setlist[enumerate,2]{
                ref={\thetheorem.(\arabic*).(\alph*)}
            }%
        \Crefname{propositionenumi}{Proposition}{Propositions}
            \setlist[enumerate,1]{
                ref={\csname theproposition\endcsname.(\arabic*)}
            }%
            \setlist[enumerate,2]{
                ref={\theproposition.(\arabic*).(\alph*)}
            }%
        \Crefname{lemmaenumi}{Lemma}{Lemmas}
            \setlist[enumerate,1]{
                ref={\csname thelemma\endcsname.(\arabic*)}
            }%
            \setlist[enumerate,2]{
                ref={\thelemma.(\arabic*).(\alph*)}
            }%
        \Crefname{corollaryenumi}{Corollary}{Corollaries}
            \setlist[enumerate,1]{
                ref={\csname thecorollary\endcsname.(\arabic*)}
            }%
            \setlist[enumerate,2]{
                ref={\thecorollary.(\arabic*).(\alph*)}
            }%
        \Crefname{definitionenumi}{Definition}{Definitions}
            \setlist[enumerate,1]{
                ref={\csname thedefinition\endcsname.(\arabic*)}
            }%
            \setlist[enumerate,2]{
                ref={\thedefinition.(\arabic*).(\alph*)}
            }%
        \Crefname{exampleenumi}{Example}{Examples}
            \setlist[enumerate,1]{
                ref={\csname theexample\endcsname.(\arabic*)}
            }%
            \setlist[enumerate,2]{
                ref={\theexample.(\arabic*).(\alph*)}
            }%
        \newcommand{\qedshift}{\vspace*{-\baselineskip}}
        \AtBeginEnvironment{\env}{%
          \pushQED{\qed}%
        }
        \AtEndEnvironment{\env}{\popQED\endexample}
    \NewDocumentCommand{\mathcommand}{mO{0}m}
     {
      \exp_args:Nc \NewCommandCopy {khue_kept_\cs_to_str:N #1} { #1 }
      \exp_args:Nc \newcommand {khue_new_\cs_to_str:N #1}[#2]{#3}
      \DeclareDocumentCommand {#1} {}
       {
        \mode_if_math:TF
         {
          \use:c {khue_new_\cs_to_str:N #1}
         }
         {
          \use:c {khue_kept_\cs_to_str:N #1}
         }
       }
     }
    \mathcommand{\h}{\textup{-}}
    \newcommand{\tx}{\mathrm}
    \mathcommand{\b}{\mathbf}
    \newcommand{\s}{\mathsf}
    \newcommand{\cl}{\mathcal}
    \mathcommand{\bb}{\mathbb}
    \DeclareMathAlphabet{\bbn}{U}{bbold}{m}{n}
    \newcommand{\dc}[1]{\TextOrMath{double category\xspace#1}{\b{\bb#1}}}
    \newcommand{\scr}{\mathscr}
    \mathcommand{\sf}{\mathsf}
    \mathcommand{\u}{\underline}
    \newcommand{\TODO}[1][TODO]{\textcolor{orange}{\textup{#1}}\xspace}
    \newcommand{\flip}[1]{\text{\rotatebox[origin=c]{-180}{$#1$}}}
    \newcommand{\datetoday}{\date{\cleanlookdateon\today}}
    \newcommand{\defeq}{\mathrel{:=}}
    \mathcommand{\d}{\mathbin{;}}
    \mathcommand{\c}{\circ}
    \newcommand{\ph}[1][]{{({-}_{#1})}}
    \newcommand{\iso}{\cong}
    \newcommand{\from}{\leftarrow}
    \newcommand{\xto}{\xrightarrow}
    \newcommand{\tto}{\Rightarrow}
    \newcommand{\ffto}{\hookrightarrow}
    \newcommand*\cocolon{%
            \nobreak
            \mskip6mu plus1mu
            \mathpunct{}%
            \nonscript
            \mkern-\thinmuskip
            {:}%
            \mskip2mu
            \relax
    }
    \def\slashedarrowfill@#1#2#3#4#5{%
    $\m@th\thickmuskip0mu\medmuskip\thickmuskip\thinmuskip\thickmuskip
    \relax#5#1\mkern-7mu%
    \cleaders\hbox{$#5\mkern-2mu#2\mkern-2mu$}\hfill
    \mathclap{#3}\mathclap{#2}%
    \cleaders\hbox{$#5\mkern-2mu#2\mkern-2mu$}\hfill
    \mkern-7mu#4$%
    }
    \def\rightslashedarrowfill@{%
    \slashedarrowfill@\relbar\relbar\mapstochar\rightarrow}
    \newcommand\xslashedrightarrow[2][]{%
    \ext@arrow 0055{\rightslashedarrowfill@}{#1}{#2}}
    \def\leftslashedarrowfill@{%
    \slashedarrowfill@\leftarrow\relbar\mapsfromchar\relbar}
    \newcommand\xslashedleftarrow[2][]{%
    \ext@arrow 0055{\leftslashedarrowfill@}{#1}{#2}}
    \newcommand{\lto}{\xlto{}}
    \newcommand{\inv}{^{-1}}
    \newcommand{\op}{{}^\tx{op}}
    \newcommand{\tp}[1]{\langle#1\rangle}
    \newcommand{\unit}{{\tp{}}}
    \newcommand{\ob}[1]{|#1|}
    \DeclareFontFamily{U}{min}{}
    \DeclareFontShape{U}{min}{m}{n}{<-> udmj30}{}
    \mathcommand{\comma}{\downarrow}
    \newcommand{\copi}{\flip\pi}
    \newsavebox{\whitecircstar}\sbox{\whitecircstar}{\kern.075em\tikz{\node[draw, circle,line width=.36pt, inner sep=0]{$*$};}\kern.075em}
    \newsavebox{\blackcircstar}\sbox{\blackcircstar}{\kern.075em\tikz{\node[fill, circle, line width=.36pt, inner sep=0, text=white]{$*$};}\kern.075em}
    \newcommand{\pow}{\pitchfork}
    \newcommand{\copow}{\cdot}
    \def\widebreve{\mathpalette\wide@breve}
    \def\wide@breve#1#2{\sbox\z@{$#1#2$}%
         \mathop{\vbox{\m@th\ialign{##\crcr
    \kern0.08em\brevefill#1{0.8\wd\z@}\crcr\noalign{\nointerlineskip}%
                        $\hss#1#2\hss$\crcr}}}\limits}
    \def\brevefill#1#2{$\m@th\sbox\tw@{$#1($}%
      \hss\resizebox{#2}{\wd\tw@}{\rotatebox[origin=c]{90}{\upshape(}}\hss$}
    \NewDocumentCommand{\jrule}{om}{%
        \IfNoValueTF{#1}
            {\textsc{#2}}
            {$#1$-\textsc{#2}}%
    }
    \newcommand{\N}{{\bb N}}
    \newcommand{\Set}{{\b{Set}}}
    \newcommand{\V}{{\bb V}} 
    \newcommand{\Cat}{\b{Cat}}
    \newcommand{\CAT}{\b{CAT}}
    \newcommand{\ff}{fully faithful}
    \newcommand{\ioo}{identity-on-objects}
    \newcommand{\eg}{e.g.\@\xspace}
    \newcommand{\ie}{i.e.\@\xspace}
    \newcommand{\cf}{cf.\@\xspace}
    \newcommand{\aka}{a.k.a.\@\xspace}
    \NewDocumentCommand{\etc}{t.}{etc.\@\xspace}
    \NewDocumentCommand{\ibid}{t.}{ibid.\@\xspace}
    \NewDocumentCommand{\loccit}{t.}{loc.\ cit.\@\xspace}
    \newcommand{\lfp}{locally finitely presentable}
\patchcmd{\beamer@sectionintoc}{\vfill}{\vskip\itemsep}{}{}
  \colorlet{colour-bg}{black!85} 
  \definecolor{colour-primary}{HTML}{cc80ff} 
  \colorlet{colour-text}{black!10} 
  \colorlet{colour-subtle}{black!40} 
  \colorlet{colour-block-bg}{black!80} 
  \definecolor{colour-warning-bg}{HTML}{ffea80} 
  \definecolor{colour-warning-primary}{HTML}{e08152} 
  \apptocmd{\frame}{}{\justifying}{}
  \newtheorem{proposition}[theorem]{\translate{Proposition}}
  \renewenvironment<>{block}[1]{%
      \begin{actionenv}#2%
        \par%
        \usebeamertemplate{block begin}}
      {\par%
        \usebeamertemplate{block end}%
      \end{actionenv}}
  \renewenvironment<>{exampleblock}[1]{%
      \begin{actionenv}#2%
          \par%
          \only<presentation>{
            \setbeamercolor{local structure}{parent=example text}}%
          \usebeamertemplate{block example begin}}
        {\par%
          \usebeamertemplate{block example end}%
        \end{actionenv}}
\def\and{\unskip{ }\@@and{ }\ignorespaces}
\def\author@andify{%
  \nxandlist {\unskip ,\penalty-1 \space\ignorespaces}%
    {\unskip {} \@@and~}%
    {\unskip \penalty-2 \space \@@and~}%
}
\renewcommand{\andify}{%
  \nxandlist{\unskip, }{\unskip{} \@@and~}{\unskip \space \@@and~}}
\newcommand{\ct}{\scr}
\newcommand{\tc}{\scr}
\newcommand{\fc}{\bb}
\newcommand{\sk}{\cl}
\newcommand{\chord}[1]{{#1}^{+}}
\newcommand{\inchord}[1]{{#1}^{-}}
\newcommand{\FCAT}{{\F\h\CAT}}
\newcommand{\twoCAT}{{2\h\CAT}}
\newcommand{\SK}{{\h\b{SK}}}
\newcommand{\VSK}{{\V\SK}}
\newcommand{\FSK}{{\F\SK}}
\newcommand{\twoSK}{{2\SK}}
\newcommand{\Mod}{{\b{Mod}}}
\newcommand{\SET}{{\b{SET}}}
\newcommand{\fcat}[1]{{\b{\fc#1}}}
\newcommand{\FMonCat}[2]{\fcat{MonCat}_{{#1},{#2}}}
\newcommand{\FCartCat}[2]{\fcat{CartCat}_{{#1},{#2}}}
\newcommand{\FDblCat}[2]{\fcat{DblCat}_{{#1},{#2}}}
\newcommand{\FFib}[2]{\fcat{Fib}_{{#1},{#2}}}
\newcommand{\FOpfib}[2]{\fcat{Opfib}_{{#1},{#2}}}
\newcommand{\FDFib}{\fcat{DFib}}
\newcommand{\FDOpfib}{\fcat{DOpfib}}
\newcommand{\FFun}{\fcat{Fun}}
\newcommand{\FMod}{\fcat{Mod}}
\newcommand{\F}{{\scr F}}
\newcommand{\SMod}{{\b{\sk Mod}}}
\renewcommand{\lto}{\leadsto}
\renewcommand{\V}{{\scr V}}
\newcommand{\W}{{\scr W}}
\newcommand{\wb}{{\overline w}}
\newcommand{\wbp}{{\overline{w'}}}
\renewcommand{\ob}{{\tx{ob}}}
\newcommand{\el}{{\tx{el}}}
\newcommand{\U}{{\mathfrak U}}
\DeclareMathOperator*{\mlim}{mlim}
\newcommand{\allweights}{{\tx{all}}}
\newcommand{\FPCat}{{\b{FP}}}
\newcommand{\coeq}{{\tx{coeq}}}
\newcommand{\kerp}{{\tx{ker\;pair}}}
\title[Enhanced 2-categorical structures]{Enhanced 2-categorical structures,\\two-dimensional limit sketches \\and the symmetry of internalisation}
\author{Nathanael Arkor}
\address{Department of Software Science, Tallinn University of Technology, Estonia}
\author{John Bourke}
\address{Department of Mathematics and Statistics, Faculty of Science, Masaryk University, Czech Republic}
\author{Joanna Ko}
\address{Department of Mathematics and Statistics, Faculty of Science, Masaryk University, Czech Republic}
\thanks{The second-named and third-named authors acknowledge the support of the Grant Agency of the Czech Republic under the grant 22-02964S}
\subjclass{18C10, 18C30, 18C40, 18D20, 18M65, 18N10}
\begin{document}

\begin{abstract}
	Many structures of interest in two-dimensional category theory have aspects that are inherently strict. This strictness is not a limitation, but rather plays a fundamental role in the theory of such structures. For instance, a monoidal fibration is -- crucially -- a \emph{strict} monoidal functor, rather than a pseudo or lax monoidal functor. Other examples include monoidal double categories, double fibrations, and intercategories. We provide an explanation for this phenomenon from the perspective of enhanced 2-categories, which are 2-categories having a distinguished subclass of 1-cells representing the strict morphisms. As part of our development, we introduce enhanced 2-categorical limit sketches and explain how this setting addresses shortcomings in the theory of 2-categorical limit sketches. In particular, we establish the symmetry of internalisation for such structures, entailing, for instance, that a monoidal double category is equivalently a pseudomonoid in an enhanced 2-category of double categories, or a pseudocategory in an enhanced 2-category of monoidal categories.
\end{abstract}

\maketitle

\tableofcontents

\section{Introduction}

Category theory and its applications involve a wide range of categorical structures, such as monads, monoidal categories, fibrations, and double categories, to name but a few. These two-dimensional structures may be viewed as residing within the 2-category of categories, but may also be interpreted in other 2-categories to produce useful variations.
For instance, one may consider monads in any 2-category~\cite{street1972formal}, pseudomonoids in a 2-category with finite products~\cite{day1997monoidal}, and fibrations and pseudocategories in a 2-category with sufficient limits~\cite{street1974fibrations,martins2006pseudo}. This permits the unification of many different concepts, and can lead to simplifications of their theory.

Our motivation in this paper is to identify and resolve a limitation of the classical, purely 2-categorical approach to two-dimensional structure, and to show that our solution clarifies several perplexities that have recently been observed in the literature, particularly in connection with the theories of double categories and of fibrations. We shall explain the limitation by means of an example in two parts, each illustrating a different facet of the same problem.

The first issue naturally emerges when considering double categorical structures of recent interest, such as monoidal double categories \cite{shulman2010constructing} and double fibrations \cite{cruttwell2022double}. Let us focus on the case of monoidal double categories. A double category is a pseudocategory\footnotemark{} internal to the 2-category $\Cat$ of categories, functors, and natural transformations~\cite{grandis1999limits}.  Explicitly, it comprises categories $C_0$ and $C_1$, together with source and target functors $s$ and $t$, and functors $c$ and $i$ specifying the action of composition and identities, associative and unital up to coherent isomorphism.\footnotetext{We use \emph{double category} to mean a pseudo double category, rather than a strict double category.}%
\[\begin{tikzcd}
	{C_1 \times_{C_0} C_1} && {C_1} && {C_0}
	\arrow["c"{description}, from=1-1, to=1-3]
	\arrow["{\pi_1}", shift left=3, from=1-1, to=1-3]
	\arrow["{\pi_2}"', shift right=3, from=1-1, to=1-3]
	\arrow["s", shift left=3, from=1-3, to=1-5]
	\arrow["t"', shift right=3, from=1-3, to=1-5]
	\arrow["i"{description}, from=1-5, to=1-3]
\end{tikzcd}\]
From this perspective, it would be natural to hope that a monoidal double category would be a pseudocategory internal to the 2-category $\b{MonCat}_p$ of monoidal categories, pseudo (\aka strong) monoidal functors, and monoidal natural transformations.
This is almost true, but not quite. To capture monoidal double categories, it is necessary to impose the additional requirement that the source and target functors $s$ and $t$ are \emph{strict} monoidal. (This then implies that the pullback projections $\pi_1$ and $\pi_2$ are also strict monoidal functors; in fact, the requisite pullback $C_1 \times_{C_0} C_1$ may not even exist if $s$ and $t$ are not strict monoidal.)

This leads us to replace the diagram above by the following diagram,
\[\begin{tikzcd}
	{C_1 \times_{C_0} C_1} && {C_1} && {C_0}
	\arrow["c"{description}, squiggly, from=1-1, to=1-3]
	\arrow["{\pi_1}", shift left=3, from=1-1, to=1-3]
	\arrow["{\pi_2}"', shift right=3, from=1-1, to=1-3]
	\arrow["s", shift left=3, from=1-3, to=1-5]
	\arrow["t"', shift right=3, from=1-3, to=1-5]
	\arrow["i"{description}, squiggly, from=1-5, to=1-3]
\end{tikzcd}\]
in which the straight arrows ($\to$) indicate the morphisms intended to be \emph{strict}, whereas the squiggly arrows ($\rightsquigarrow$) indicate arbitrary, \emph{weak} morphisms. This is no longer a diagram in a 2-category, since it involves two kinds of morphism.
Instead, it lives in a structure known as an \emph{enhanced 2-category}~\cite{lack2012enhanced}, which is a 2-category with two classes of morphism, called \emph{tight} and \emph{loose}, in which the tight morphisms are a subclass of the loose morphisms.

This example, along with several others that we will discuss subsequently, suggest that we ought to define structures such as pseudocategories not in the context of 2-categories, but rather in the context of enhanced 2-categories. As expected, in this context, a monoidal double category is precisely a pseudocategory internal to an enhanced 2-category $\FMonCat s p$ of monoidal categories, in which the tight and loose morphisms are the strict and pseudo monoidal functors respectively.

The second issue, slightly more subtle, occurs even in more basic settings. There are various ways to make the concept of ``structure on a category'' precise. A traditional approach is through the use of 2-monads~\cite{blackwell1989two}: for instance, there is a 2-monad on $\Cat$ whose (strict) algebras are precisely (non-strict) monoidal categories. A more general approach is through the use of 2-sketches: for instance, whilst there is no 2-monad on $\Cat$ whose algebras (strict or otherwise) are pseudocategories, there is a 2-sketch whose (strict) models are precisely pseudocategories.

One advantage of this abstract approach over simply defining two-dimensional structures by hand is that the definitions of strict morphisms for structures defined by a 2-sketch are obtained for free, namely as the strict morphisms of models. Unfortunately, this is not the case for \emph{weak} morphisms. To illustrate the problem, consider the following diagram for pseudomonoids, which underlies a 2-sketch $\sk M$ -- \ie a 2-category $\tc M$ equipped with chosen cones (in this case, simply the product cones).
\[\begin{tikzcd}
	{M^2} && M && 1
	\arrow["u"{description}, from=1-5, to=1-3]
	\arrow["m"{description}, from=1-1, to=1-3]
	\arrow["{\pi_1}", shift left=3, from=1-1, to=1-3]
	\arrow["{\pi_2}"', shift right=3, from=1-1, to=1-3]
\end{tikzcd}\]
Models of the 2-sketch $\sk M$, which are 2-functors $\tc M \to \Cat$ sending cones in $\sk M$ to limit cones in $\Cat$, are precisely monoidal categories.

Now, suppose we are given a pair of monoidal categories $A$ and $B$, viewed as 2-functors $A, B \colon \tc M \to \Cat$. We would like to capture the notion of a lax monoidal functor $\phi \colon A \to B$ in terms of the 2-sketch $\sk M$. Given that a strict monoidal functor from $A$ to $B$ corresponds precisely to a 2-natural transformation from $A$ to $B$, our first guess might be that a lax monoidal functor should correspond to a lax natural transformation. This is almost true, but not quite. In fact, whilst a lax monoidal functor is lax natural in the unit and multiplication 1-cells $u$ and $m$,
\[
\begin{tikzcd}
	1 & 1 \\
	A & B
	\arrow[Rightarrow, no head, from=1-1, to=1-2]
	\arrow["{u_A}"', from=1-1, to=2-1]
	\arrow["\phi"', from=2-1, to=2-2]
	\arrow["{u_B}", from=1-2, to=2-2]
	\arrow["{\phi_u}"', shorten <=6pt, shorten >=6pt, Rightarrow, from=1-2, to=2-1]
\end{tikzcd}
\hspace{6em}
\begin{tikzcd}
	{A^2} & {B^2} \\
	A & B
	\arrow["\phi"', from=2-1, to=2-2]
	\arrow["{m_A}"', from=1-1, to=2-1]
	\arrow["{\phi^2}", from=1-1, to=1-2]
	\arrow["{m_B}", from=1-2, to=2-2]
	\arrow["{\phi_m}"', shorten <=6pt, shorten >=6pt, Rightarrow, from=1-2, to=2-1]
\end{tikzcd}
\]
it is \emph{strictly} natural in the product projections $\pi_1$ and $\pi_2$.
\[
\begin{tikzcd}
	{A^2} & {B^2} \\
	A & B
	\arrow["{\phi^2}", from=1-1, to=1-2]
	\arrow[""{name=0, anchor=center, inner sep=0}, "{(\pi_1)_A}"', from=1-1, to=2-1]
	\arrow[""{name=1, anchor=center, inner sep=0}, "{(\pi_1)_B}", from=1-2, to=2-2]
	\arrow["\phi"', from=2-1, to=2-2]
	\arrow["{=}"{description}, draw=none, from=0, to=1]
\end{tikzcd}
\hspace{4em}
\begin{tikzcd}
	{A^2} & {B^2} \\
	A & B
	\arrow["{\phi^2}", from=1-1, to=1-2]
	\arrow[""{name=0, anchor=center, inner sep=0}, "{(\pi_2)_A}"', from=1-1, to=2-1]
	\arrow[""{name=1, anchor=center, inner sep=0}, "{(\pi_2)_B}", from=1-2, to=2-2]
	\arrow["\phi"', from=2-1, to=2-2]
	\arrow["{=}"{description}, draw=none, from=0, to=1]
\end{tikzcd}
\]
Thus, it is necessary to treat the product projections $\pi_1$ and $\pi_2$ as special \emph{strict} morphisms. (Note that precisely the same problem occurs with pseudo and colax monoidal functors.) Consequently, we are led to consider the following refinement of the 2-sketch $\sk M$.
\[\begin{tikzcd}
	{M^2} && M && 1
	\arrow["u"{description}, squiggly, from=1-5, to=1-3]
	\arrow["m"{description}, squiggly, from=1-1, to=1-3]
	\arrow["{\pi_1}", shift left=3, from=1-1, to=1-3]
	\arrow["{\pi_2}"', shift right=3, from=1-1, to=1-3]
\end{tikzcd}\]
This is no longer a 2-sketch, but rather an \emph{enhanced 2-sketch}, the appropriate notion of sketch for enhanced 2-categories. A lax morphism of models for an enhanced 2-sketch is a lax natural transformation that restricts on the tight morphisms to a 2-natural transformation, thus resolving the previous mismatch between lax monoidal functors and lax morphisms of models.

The solution to both of the problems illustrated above is therefore to pass from 2-categories to enhanced 2-categories. The main aim of this paper is to introduce and study enhanced 2-categorical structures, both in the form of concrete examples and, more generally, in the form of enhanced limit 2-sketches. For instance, in \cref{F-categorical-structures}, we show that pseudocategories, pseudomonoids, and fibrations may naturally be defined in enhanced 2-categories with sufficient limits. These enhanced 2-categorical structures allow us to capture a wide range of examples that lie slightly outside the traditional 2-categorical setting, including the double categorical structures mentioned above, as well as various examples of a fibrational flavour, such as monoidal fibrations, tangent fibrations, and fibrewise opfibrations~\cite{shulman2008framed,cigoli2020fibered,cockett2018differential}.

One of the merits of enhanced 2-categories is that, like 2-categories, they are simply enriched categories for a certain base of enrichment $\F$. Consequently, many aspects of enhanced 2-category theory arise by appropriate instantiations of enriched category theory.  In particular, we introduce enhanced limit 2-sketches as $\F$-enriched limit sketches, before showing that the previous examples of enhanced 2-categorical structures are the models for respective enhanced 2-sketches.

As an application of the theory of such sketches, we prove a general \emph{symmetry of internalisation} result for the models of an enhanced 2-sketch that obtains, for instance, the fact that monoidal double categories can be presented either as pseudomonoids in an enhanced 2-category of double categories or as pseudocategories in an enhanced 2-category of monoidal categories. In particular, \cref{Mod-skew-symmetry-refined} establishes that, for enhanced limit 2-sketches $\sk S$ and $\sk T$ and a suitably complete enhanced 2-category $\fc C$,  there is an isomorphism of enhanced 2-categories of models
\[\FMod_\wb(\sk S, \FMod_w(\sk T, \fc C)) \iso \FMod_w(\sk T, \FMod_\wb(\sk S, \fc C))\]
where $w \in \{ s, p, l, c \}$ specifies a notion of morphism (strict, pseudo, lax, and colax respectively), and $\wb$ is its dual. In \cref{applications}, we then show that this symmetry recovers many known situations in which a given two-dimensional structure admits several different presentations, as well as producing new examples.

\subsection{Outline of the paper}

We now give a more detailed overview of the paper.

In \cref{notationalconventions} we establish our notational and foundational conventions, before turning in \cref{enrichedcategories} to the necessary background on enriched category theory.
Our focus here is on weighted limits, and we give examples from 2-category theory that will be used later in the paper.

\Cref{enhanced-2-category-theory} concerns enhanced 2-categories, which are categories enriched in $\F$.  We start by recalling the basic aspects of $\F$-categories and $\F$-weighted limits from \cite{lack2012enhanced}, before defining our three motivating examples of enhanced 2-categorical structures -- pseudomonoids, pseudocategories, and fibrations -- and explaining what concepts they capture in a range of enhanced 2-categories.

In \cref{sketches} we begin by recalling the notion of enriched limit sketch~\cite{kelly1982basic}, before specialising to the case of $\F$-enriched sketches. We introduce the enhanced 2-categories of models for an enhanced limit 2-sketch (\ie $\F$-enriched limit sketch) and examine them in each of our examples.

\Cref{multicategory-of-sketches} is the technical heart of the paper. Therein, we construct several multicategory structures on the category of limit $\F$-sketches, parameterised by various flavours of weakness. We show that each of these multicategories is representable and closed, thereby inducing closed monoidal structures on the category of limit $\F$-sketches.
In \cref{multiple-perspectives} we make use of this closed structure to establish symmetry of internalisation in \cref{Mod-skew-symmetry-refined}, before showing how this sheds light on a range of enhanced 2-categorical structures, such as monoidal double categories, double fibrations, and intercategories.

Finally, in \cref{2-sketches}, we relate limit $\F$-sketches to the better known limit 2-sketches. We show that the motivating examples of enhanced limit 2-sketches may be constructed universally from their underlying 2-sketches. We then explain how every algebraic 2-theory~\cite{power1999enriched}, and more generally every flexible-limit 2-sketch, induces an enhanced limit 2-sketch with the same 2-categories of models, establishing that the enhanced 2-categorical perspective strictly generalises the 2-categorical perspective. We conclude in \cref{future-directions} by mentioning possible future directions.

\subsection{Related work}
\label{related-work}
\subsubsection{Two-dimensional limit sketches}

Two-dimensional limit sketches occupy a curious position in the literature. Perhaps the earliest allusion to two-dimensional sketches is the work of \textcite{street1980cosmoi}, who defines a \emph{pseudo model} for a (one-dimensional) limit sketch\footnotemark{} to be a pseudofunctor sending the specified cones to pseudo bilimit cones~\cite[(8.5)]{street1980cosmoi}.
\footnotetext{\citeauthor{street1980cosmoi} calls limit sketches \emph{Gabriel theories}.}%
Enriched limit sketches were introduced in \cite[\S6.3]{kelly1982basic} and further developed in \cite{kelly1982structures}. Whilst the notion of limit 2-sketch is simply that of a $\CAT$-enriched limit sketch, it was already observed at this time that enriched category theory is not sufficient for the study of two-dimensional sketches, due to its inability to adequately capture weakness. However, \citeauthor{kelly1982basic} promised, in these and later works (\cf~\cites{dubuc1983presentation}[Remark~7.5]{bird1989flexible}), that a proper treatment would be forthcoming. Such a treatment never appeared. Later, a definition of PIE-limit 2-theory\footnotemark{} was given by \textcite[\S4]{power1995tricategories}, but morphisms of models were taken to be arbitrary pseudonatural transformations, and hence suffered from the second problem described above. \citeauthor{power1995tricategories} also introduced a bicategorical notion of theory (namely, a small bicategory with finite bilimits~\cite[\S5]{power1995tricategories}), which motivated a coherence theorem for finite bilimit preserving 2-functors~\cite[Theorem~6.7]{power1995tricategories}. More recently, \citeauthor{makkai2010two} considered in unpublished work a 2-categorical notion of limit theory (namely, a small 2-category with finite conical pseudolimits and powers by the interval category)~\cite{makkai2010two}, and \citeauthor{di2022biaccessible} studied a bicategorical notion of limit theory (namely, small 2-categories with finite bilimits)~\cite{di2022biaccessible}; both sets of authors make the same choice of morphisms of models as \textcite{power1995tricategories}.

A refined notion of weak morphism between models was considered by \textcite{lack2007lawvere} in unpublished work on algebraic 2-theories, corresponding exactly to the weak morphisms of algebras for a 2-monad. Their definition, whilst of an enhanced 2-categorical flavour, preceded the introduction of enhanced 2-categories in \cite{lack2012enhanced}. Subsequently, PIE-limit 2-theories were considered in \cite[\S9]{bourke2021accessible}, who extended the refined notion of morphism of models to this setting and suggested a connection with enhanced 2-categories; PIE-limit 2-theories are further generalised by the cloven flexible-limit 2-sketches we introduce in \cref{2-sketches}.
\footnotetext{A \emph{limit theory} is a limit sketch whose underlying category is closed under a class of limits, and whose cones comprise the limit cones.}

\subsubsection{Lax morphisms of one-dimensional sketches}

\label{lax-morphisms-via-2-categories}
A general notion of lax morphism between models of a (one-dimensional) sketch, relative to a 2-category, was introduced by \textcite[69]{bastiani1974multiple}.\footnote{Note that, counterintuitively, this notion is \emph{not} a special case of the notion of lax morphism relative to a double category defined \ibid \cite[68]{bastiani1974multiple}.} Let $\tc C$ be a 2-category and let $\sk S$ be a sketch. One may consider models of $\sk S$ in the category $\dc{Sq}(\tc C)_1$ of 1-cells and lax squares in $\tc C$. For instance, taking $\tc C = \Cat$ and $\sk S$ to be the sketch for categories, a model of $\sk S$ in $\dc{Sq}(\Cat)_1$ comprises a span $(s_C, \phi_s, s_D) \colon {\phi_0 \from \phi_1 \to \phi_0} \cocolon (t_C, \phi_t, t_D)$ together with morphisms $(i_C, \phi_i, i_D) \colon \phi_0 \to \phi_1$ and $(m_C, \phi_m, m_D) \colon \phi_2 \defeq \phi_1 \times_{\phi_0} \phi_1 \to \phi_1$ satisfying unitality and associativity axioms. Explicitly, these comprise natural transformations as follows.
\[
\begin{tikzcd}
	{C_1} & {D_1} \\
	{C_0} & {D_0}
	\arrow["{\phi_1}", from=1-1, to=1-2]
	\arrow["{s_C}"', from=1-1, to=2-1]
	\arrow["{\phi_s}"', shorten <=4pt, shorten >=4pt, Rightarrow, from=1-2, to=2-1]
	\arrow["{s_D}", from=1-2, to=2-2]
	\arrow["{\phi_0}"', from=2-1, to=2-2]
\end{tikzcd}
\hspace{2em}
\begin{tikzcd}
	{C_1} & {D_1} \\
	{C_0} & {D_0}
	\arrow["{\phi_1}", from=1-1, to=1-2]
	\arrow["{t_C}"', from=1-1, to=2-1]
	\arrow["{\phi_t}"', shorten <=4pt, shorten >=4pt, Rightarrow, from=1-2, to=2-1]
	\arrow["{t_D}", from=1-2, to=2-2]
	\arrow["{\phi_0}"', from=2-1, to=2-2]
\end{tikzcd}
\hspace{2em}
\begin{tikzcd}
	{C_0} & {D_0} \\
	{C_1} & {D_1}
	\arrow["{\phi_0}", from=1-1, to=1-2]
	\arrow["{i_C}"', from=1-1, to=2-1]
	\arrow["{\phi_i}"', shorten <=4pt, shorten >=4pt, Rightarrow, from=1-2, to=2-1]
	\arrow["{i_D}", from=1-2, to=2-2]
	\arrow["{\phi_1}"', from=2-1, to=2-2]
\end{tikzcd}
\hspace{2em}
\begin{tikzcd}
	{C_2} & {D_2} \\
	{C_1} & {D_1}
	\arrow["{\phi_2}", from=1-1, to=1-2]
	\arrow["{m_C}"', from=1-1, to=2-1]
	\arrow["{\phi_m}"', shorten <=4pt, shorten >=4pt, Rightarrow, from=1-2, to=2-1]
	\arrow["{m_D}", from=1-2, to=2-2]
	\arrow["{\phi_1}"', from=2-1, to=2-2]
\end{tikzcd}
\]
\textcite{bastiani1974multiple} then require each such natural transformation to be the identity if it is induced by a morphism in $\ct S$ in the image of a diagram over which there exists a cone in $\sk S$: in the example above, these are precisely $\phi_s$ and $\phi_t$ (because the object of composable morphisms in $\sk S$ is a pullback over the source and target morphisms). Consequently, the lax morphisms of \cite{bastiani1974multiple} are a special case of the lax morphisms of models of a (one-dimensional) enhanced sketch in our sense, in which the tight morphisms are precisely those over which there exists a cone. We shall exhibit a precise connection to the work of \cite{bastiani1974multiple} in \cref{(co)free-F-sketches}.

\subsubsection{Enhanced 2-categories versus double categories}

The notion of enhanced 2-category is equivalent to that of a strict double category equipped with a functorial choice of companions in the sense of \cite{grandis2004adjoint}. It is therefore reasonable to imagine that our development might be carried out double categorically, rather than enhanced 2-categorically. However, it should be noted, as observed in \cite[\S1]{lack2012enhanced}, that the notion of double limit introduced in \cite{grandis1999limits} does not suffice to capture weighted limits in enhanced 2-category theory: whilst one does recover the appropriate universal property with respect to tight morphisms, a double limit does not give an appropriate universal property with respect to loose morphisms. It is likely that one could translate the enhanced 2-categorical universal property into the language of double categories with companions, and thereby reformulate our development in this language. However, we shall not explore this direction.

Our work should also be contrasted with the theory of cartesian double theories developed in \cite{lambert2024cartesian,patterson2024products} which concerns structures definable using double categorical finite product theories. In our setting, the two classes of morphisms in each example tend to comprise a strict and weak notion of homomorphism of structure respectively (\eg strict monoidal functors and lax monoidal functors). In contrast, in the double categorical setting, the two classes of morphisms in each example tend to comprise a notion of (strict or weak) homomorphism, together with a notion of bimodule of structure (\eg lax monoidal functors and monoidal distributors).

\subsubsection{Enhanced limit sketches for enhanced categories}
\label{enhanced-categories}

Whilst our interest lies primarily in two-dimensional structure, enhanced categories are also of interest in the one-dimensional setting (\cf the $\b{Subset}$-categories of \cite{power2002premonoidal}). It does not appear that enhanced sketches have been considered even in this setting. Consequently, our results are of interest even to those concerned solely with one-dimensional structures.

\subsection{Acknowledgements}

The authors thank Christina Vasilakopoulou who, at \emph{Category Theory 2023}, posed the question to N.~Arkor of how to relate the different perspectives on structured double categories. This led him to discuss this question with his colleagues J.~Bourke and J.~Ko at Masaryk University who had, serendipitously, already begun studying enhanced limit 2-sketches. Combining their efforts led to the present paper. The authors also thank John Power for providing historical context for the study of two-dimensional limit theories.

\section{Notational conventions}
\label{notationalconventions}

We begin by establishing conventions for notation and size.

\subsection{Notation}

We shall use $A, B$, \etc for objects; $\ct S, \ct T$, \etc for categories, $\V$-enriched categories, and 2-categories; $\fc S$, $\fc T$, \etc for $\F$-categories; and $\sk S, \sk T$, \etc{} for sketches. We use boldface to denote named 2-categories, such as $\Cat$.

\subsection{Size conventions}\label{sect:size}

For the sake of readability, we will not emphasise size concerns in the paper proper, and so explain our approach to size here. Some of our main results, for instance \cref{Mod-skew-symmetry-refined}, concern the structure of various closed multicategories of $\F$-enriched categories and sketches (for a certain monoidal category $\F$ discussed in \cref{enhanced-2-category-theory}). For these multicategories to be closed, it is necessary to restrict to \emph{small} $\F$-categories and sketches, since enriched functor categories $[\fc A, \fc B]$ generally do not exist unless $\fc A$ is small. This means that we require our main examples of $\F$-categories -- such as the $\F$-categories of categories, monoidal categories, double categories, and fibrations (\cref{F-categories}) -- to be small.

This is achieved by positing the existence of a Grothendieck universe $\U$. We denote by $\SET$ the large category of all sets, and by $\Set$ the small category of $\U$-small sets. Then $\CAT$ denotes the large (2-)category of small categories (\ie categories internal to $\SET$), and $\Cat$ denotes the small (2-)category of $\U$-small categories (\ie categories internal to $\Set$). Our aforementioned examples of $\F$-categories are then taken to comprise $\U$-small categories, $\U$-small monoidal categories, and so on.

\begin{remark}
	An alternative approach to size, not requiring the existence of a Grothendieck universe, would be to work only with \emph{partially} closed multicategories of locally small \mbox{$\F$-categories} and locally small $\F$-sketches. Whilst this approach would yield the same applications, it has a number of drawbacks: for instance, in addition to making theorem statements more convoluted (requiring additional assumptions constraining the size of certain categories), tensor products of locally small $\F$-categories (\cref{tensor-product}), and consequently also of locally small $\F$-sketches (\cref{tensor-of-sketches}), do not exist in general, as they fail to remain locally small.
\end{remark}

\section{Background on weighted limits}
\label{enrichedcategories}

In this section we recall the basics of weighted limits in enriched category theory, giving some examples from 2-category theory.
Following \textcite{kelly1982basic}, we take $\V$ to be a complete and cocomplete symmetric closed monoidal category. In this setting, we have access to all of the standard constructions of enriched category theory, such as the construction of enriched functor categories and of free $\V$-categories, and may view $\V$ itself as a $\V$-category. The two bases of enrichment of interest in this paper are $\V = \CAT$ and $\V = \F$, the latter of which will be the focus of \cref{enhanced-2-category-theory}.

A \emph{weight}\footnotemark{} is a $\V$-functor $W \colon \ct J \to \V$ with $\ct J$ small.
\footnotetext{Note that weighted limits are called \emph{indexed limits} by \textcite{kelly1982basic}, and weights are called \emph{indexing types}.}%
Given a $\V$-functor $D \colon \ct J \to \ct C$, a \emph{$W$-weighted cone over $D$} \footnotemark{} comprises an object $X \in \ct C$ together with a $\V$-natural transformation $\gamma \colon W \tto \ct C(X, D{-})$. It is a \emph{limit cone} when, for each object $C \in \ct C$, the induced morphism in $\V$
\[\ct C(C,X) \to [\ct J,\V](W,\ct C(C,D{-}))\]
is invertible, in which case it is said to exhibit $X$ as the \emph{$W$-weighted limit $\{W,D\}$ of $D$}.%
\footnotetext{Called a \emph{$(W, D)$-cylinder} by \textcite{kelly1982basic}.}%

\begin{example}[Conical limits]
	The conical limit of an (unenriched) functor $D \colon \ct J \to \ct C_0$ to the underlying category of a $\V$-category $\ct C$ comprises an (unenriched) cone $\gamma \colon \Delta(X) \tto D$ for which, for each object $C \in \ct C$, the induced morphism $\ct C(C, X) \to \lim \ct C(X, D{-})$ in $\V$ is invertible. This implies that $\gamma$ is a limit cone in $\ct C_0$, but is generally a stronger property: in particular, when we speak of \emph{products} or \emph{pullbacks} in a $\V$-category, we mean so in this stronger sense.

	Conical limits are a simple kind of weighted limit. Denoting by $\ct J^*$ the free $\V$-category on $\ct J$, the conical limit of $D$ is equivalent to the limit of the induced $\V$-functor $\ct J^* \to \ct C$, weighted by the constant weight $\Delta(I) \colon \ct J^* \to \V$, where $I$ is the unit of $\V$~\cite[\S3.8]{kelly1982basic}.
\end{example}

\begin{example}[Powers]
	Another simple class of weighted limits are powers. Given objects $V \in \V$ and $X \in \ct C$, the power $V \pow X$ is an object equipped with a morphism $V \to \ct C(V \pow X, X)$ in $\V$, such that, for each object $C \in \ct C$, the induced morphism $\ct C(C, V \pow X) \to \V(V, \ct C(C, X))$ in $\V$ is invertible.

	Powers are weighted limits whose weights have domain the unit $\V$-category (equivalently the free $\V$-category on the terminal category)~\cite[\S3.7]{kelly1982basic}.
\end{example}

\begin{example}[2-categorical limits]
	\label{2-categorical-limits}
	For $\V$ the cartesian closed category $\CAT$, a \mbox{$\V$-enriched} category is a 2-category. There are many interesting weighted limits (a.k.a. $2$-limits) in 2-category theory, a few of which we describe below.  Before doing so, we make a few general observations specific to the 2-categorical case.

	First, the requirement that the morphism $\tc C(C,X) \to [\tc J, \CAT](W, \tc C(C,D{-}))$ is invertible -- \ie is an isomorphism of categories -- endows each weighted limit with both a one-dimensional universal property (corresponding to the bijectivity of the isomorphism on objects) and a two-dimensional universal property (corresponding to the bijectivity of the isomorphism on morphisms).

	Second, a weighted cone $\gamma \colon W \tto \tc C(X, D{-})$ induces, for each $J \in \tc J$ and $Y \in W(J)$, a 1-cell $\gamma_{J, Y} \colon X \to D(J)$ in $\tc C$ which we refer to as a \emph{cone projection}.

	Conical limits that will feature in the examples of $\CAT$-enriched sketches that we give later include products and pullbacks. An important example of a non-conical weighted limit is the comma object, whose weight is described in \cite[Example 3), p.\ 167]{street1974elementary} and which we describe in elementary terms. Given a cospan $f \colon X \to Y \from Z \cocolon g$, the comma object $f \comma g$ comes equipped with a weighted cone shaped as below left.
	\[
	\begin{tikzcd}
		{f \comma g} & X \\
		Z & Y
		\arrow["{{\pi_1}}", from=1-1, to=1-2]
		\arrow["{{\pi_2}}"', from=1-1, to=2-1]
		\arrow["\varpi"', shorten <=9pt, shorten >=9pt, Rightarrow, from=1-2, to=2-1]
		\arrow["f", from=1-2, to=2-2]
		\arrow["g"', from=2-1, to=2-2]
	\end{tikzcd}
	\hspace{2cm}
	\begin{tikzcd}
		C & X \\
		Z & Y
		\arrow["f", from=1-2, to=2-2]
		\arrow["g"', from=2-1, to=2-2]
		\arrow["{p_1}", from=1-1, to=1-2]
		\arrow["{p_2}"', from=1-1, to=2-1]
		\arrow["\theta"', shorten <=6pt, shorten >=6pt, Rightarrow, from=1-2, to=2-1]
	\end{tikzcd}\]
	The 1-dimensional universal property says that, given a weighted cone as above right,
	there exists a unique 1-cell $\tp{p_1, \theta, p_2} \colon C \to f \comma g$ such that $\pi_1 \tp{p_1, \theta, p_2} = p_1$, $\pi_2 \tp{p_1, \theta, p_2} = p_2$, and $\varpi \tp{p_1, \theta, p_2} = \theta$. Its 2-dimensional universal property says that, given a further weighted cone as below left, and 2-cells $\gamma_1$ and $\gamma_2$ satisfying the equation below right,
	\[\begin{tikzcd}
		C & X \\
		Z & Y
		\arrow["{q_1}", from=1-1, to=1-2]
		\arrow["{q_2}"', from=1-1, to=2-1]
		\arrow["g"', from=2-1, to=2-2]
		\arrow["f", from=1-2, to=2-2]
		\arrow["\phi"', shorten <=6pt, shorten >=6pt, Rightarrow, from=1-2, to=2-1]
	\end{tikzcd}
	\hspace{1.5cm}
	\begin{tikzcd}
		C & X \\
		Z & Y
		\arrow["{p_1}", from=1-1, to=1-2]
		\arrow[""{name=0, anchor=center, inner sep=0}, "{p_2}"{description}, from=1-1, to=2-1]
		\arrow[""{name=1, anchor=center, inner sep=0}, "{q_2}"', curve={height=24pt}, from=1-1, to=2-1]
		\arrow["\theta"', shorten <=6pt, shorten >=6pt, Rightarrow, from=1-2, to=2-1]
		\arrow["f", from=1-2, to=2-2]
		\arrow["g"', from=2-1, to=2-2]
		\arrow["{\gamma_2}"', shorten <=5pt, shorten >=5pt, Rightarrow, from=0, to=1]
	\end{tikzcd}
	\quad = \quad
	\begin{tikzcd}
		C & X \\
		Z & Y
		\arrow[""{name=0, anchor=center, inner sep=0}, "{p_1}", curve={height=-18pt}, from=1-1, to=1-2]
		\arrow[""{name=1, anchor=center, inner sep=0}, "{q_1}"{description}, from=1-1, to=1-2]
		\arrow["{q_2}"', from=1-1, to=2-1]
		\arrow["\phi"', shorten <=6pt, shorten >=6pt, Rightarrow, from=1-2, to=2-1]
		\arrow["f", from=1-2, to=2-2]
		\arrow["g"', from=2-1, to=2-2]
		\arrow["{\gamma_1}", shorten <=2pt, shorten >=2pt, Rightarrow, from=0, to=1]
	\end{tikzcd}
	\]
	there exists a unique 2-cell
	\[\begin{tikzcd}
		C && {f \comma g}
		\arrow[""{name=0, anchor=center, inner sep=0}, "{\tp{p_1, \theta, p_2}}", curve={height=-24pt}, from=1-1, to=1-3]
		\arrow[""{name=0p, anchor=center, inner sep=0}, phantom, from=1-1, to=1-3, start anchor=center, end anchor=center, curve={height=-24pt}]
		\arrow[""{name=1, anchor=center, inner sep=0}, "{\tp{q_1, \phi, q_2}}"', curve={height=24pt}, from=1-1, to=1-3]
		\arrow[""{name=1p, anchor=center, inner sep=0}, phantom, from=1-1, to=1-3, start anchor=center, end anchor=center, curve={height=24pt}]
		\arrow["{\tp{\gamma_1, \gamma_2}}"{description}, shorten <=6pt, shorten >=6pt, Rightarrow, from=0p, to=1p]
	\end{tikzcd}\]
	such that $\pi_1 \tp{\gamma_1, \gamma_2} = \gamma_1$ and $\pi_2 \tp{\gamma_1, \gamma_2} = \gamma_2$.

	Special cases of comma objects are the lax limit $f \comma Y \defeq f \comma 1_Y$ of a 1-cell $f \colon X \to Y$ and the colax (or oplax) limit $Y \comma g \defeq 1_Y \comma g$ of a 1-cell $g \colon Z \to Y$. Being special cases, these come equipped with universal weighted cones of the shapes depicted below,
	\[
	\begin{tikzcd}
		{f \comma Y} & X \\
		& Y
		\arrow["{\pi_1}", from=1-1, to=1-2]
		\arrow[""{name=0, anchor=center, inner sep=0}, "{\pi_2}"', from=1-1, to=2-2]
		\arrow["f", from=1-2, to=2-2]
		\arrow["\varpi"', shorten >=3pt, Rightarrow, from=1-2, to=0]
	\end{tikzcd}
	\hspace{2cm}
	\begin{tikzcd}
		{Y \comma g} \\
		Z & Y
		\arrow["{\pi_2}"', from=1-1, to=2-1]
		\arrow[""{name=0, anchor=center, inner sep=0}, "{\pi_1}", from=1-1, to=2-2]
		\arrow["g"', from=2-1, to=2-2]
		\arrow["\varpi"', shorten <=3pt, Rightarrow, from=0, to=2-1]
	\end{tikzcd}
	\]
	though it is often useful to view them not merely as special cases of comma objects, but as separate kinds of weighted limits, since many 2-categories admit either lax or colax morphisms of 1-cells, but not both, and, in particular, not general comma objects (\cf~\cite{lack2005limits}).

	Another important special case is the power $\b 2 \pow X$ of an object $X$ by the interval category $\b 2 \defeq \{ 0 \to 1 \}$, which is equivalently the comma object $1_X \comma 1_X$. Its universal weighted cone is specified simply by a 2-cell as below.
	\[\begin{tikzcd}
		{\b 2 \pow X} && X
		\arrow[""{name=0, anchor=center, inner sep=0}, "{\pi_1}", curve={height=-18pt}, from=1-1, to=1-3]
		\arrow[""{name=0p, anchor=center, inner sep=0}, phantom, from=1-1, to=1-3, start anchor=center, end anchor=center, curve={height=-18pt}]
		\arrow[""{name=1, anchor=center, inner sep=0}, "{\pi_2}"', curve={height=18pt}, from=1-1, to=1-3]
		\arrow[""{name=1p, anchor=center, inner sep=0}, phantom, from=1-1, to=1-3, start anchor=center, end anchor=center, curve={height=18pt}]
		\arrow["\varpi"', shorten <=7pt, shorten >=7pt, Rightarrow, from=0p, to=1p]
	\end{tikzcd}\qedshift\]
\end{example}

\section{Enhanced 2-categories and enhanced 2-categorical structures}
\label{enhanced-2-category-theory}

Enhanced 2-categories, \aka $\F$-categories, were introduced by \citeauthor{lack2012enhanced} in \cite{lack2012enhanced}. In this section we describe the basics of $\F$-enriched category theory and recall a characterisation of $\F$-weighted limits. We then describe a range of structures definable in the enhanced 2-categorical context.

\subsection{\texorpdfstring{$\F$}{F}-categories}
\label{F-categories}

\begin{definition}[{\cite[\S3.1]{lack2012enhanced}}]
	$\F$ is the full subcategory of the arrow category $\CAT^\to$ spanned by the full embeddings, \ie the \ff{} and injective-on-object functors.
\end{definition}

In other words, an object of $\F$ is a full embedding
\begin{equation*}
	A \colon A_\tau \ffto A_\lambda
\end{equation*}
and a morphism $f \colon A \to B$ in $\F$ comprises a pair of functors $f_\tau \colon A_\tau \to B_\tau$ and ${f_\lambda \colon A_\lambda \to B_\lambda}$ rendering the following square commutative.
\begin{equation}
	\label{tightness-preserving-functor}
	\begin{tikzcd}
		{A_\tau} & {A_\lambda} \\
		{B_\tau} & {B_\lambda}
		\arrow["A", hook, from=1-1, to=1-2]
		\arrow["{f_\tau}"', from=1-1, to=2-1]
		\arrow["{f_\lambda}", from=1-2, to=2-2]
		\arrow["B"', hook, from=2-1, to=2-2]
	\end{tikzcd}
\end{equation}
We call $A_\tau$ the \emph{tight} part of $A$, and $A_\lambda$ the \emph{loose} part of $A$, and similarly for $f$.

It is established in \cite[\S3.1]{lack2012enhanced} that $\F$ is cartesian closed, complete and cocomplete and so permits the use of the general theory of enriched categories recalled in the previous section.

An \emph{enhanced 2-category} is an $\F$-category, \ie a category enriched in $\F$. Explicitly, an enhanced 2-category $\fc A$ comprises an \ioo{}, faithful, and locally \ff{} 2-functor $\fc A \colon \fc A_\tau \to \fc A_\lambda$. We may view this data as comprising a 2-category $\fc A_\lambda$ whose 1-cells we call \emph{loose morphisms}, together with a wide and locally full sub-2-category $\fc A_\tau$ whose 1-cells we call \emph{tight morphisms}. It is this latter perspective that motivates the terminology ``enhanced 2-category''.

Under this interpretation, an \emph{enhanced 2-functor} (that is, an $\F$-functor) $F \colon \fc A \to \fc B$ is a $2$-functor $F_{\lambda} \colon \fc A_{\lambda} \to \fc B_{\lambda}$ that preserves tightness; similarly, an \emph{enhanced 2-natural transformation} (that is, an $\F$-natural transformation) is a 2-natural transformation between the 2-categories of loose morphisms, whose components are tight. Henceforth, we shall typically use the prefix \emph{$\F$-} rather than \emph{enhanced 2-} for conciseness.

\begin{notation}
	We shall write $A \to B$ for a tight morphism from $A$ to $B$; and $A \leadsto B$ for a loose morphism from $A$ to $B$.
\end{notation}

\begin{example}[2-categories]
	\label{chord-and-inchord}
	Every 2-category $\tc K$ gives rise to two $\F$-categories: a \emph{chordate} $\F$-category $\chord{\tc K}$, in which every morphism is tight; and an \emph{inchordate} $\F$-category $\inchord{\tc K}$, in which only the identities are tight.  These assignments extend to the right and left adjoints of an adjoint triple
	\[\begin{tikzcd}
		{\FCAT_0} && {\twoCAT_0}
		\arrow[""{name=0, anchor=center, inner sep=0}, "{(-)_\lambda}"{description}, from=1-1, to=1-3]
		\arrow[""{name=1, anchor=center, inner sep=0}, "{\inchord\ph}"', shift right=3, curve={height=6pt}, from=1-3, to=1-1]
		\arrow[""{name=2, anchor=center, inner sep=0}, "{\chord\ph}", shift left=3, curve={height=-6pt}, from=1-3, to=1-1]
		\arrow["\dashv"{anchor=center, rotate=-90}, draw=none, from=0, to=2]
		\arrow["\dashv"{anchor=center, rotate=-90}, draw=none, from=1, to=0]
	\end{tikzcd}\]
	where $\ph_\lambda \colon \FCAT_0 \to \twoCAT_0$ is the functor sending an $\F$-category to its underlying 2-category of loose morphisms~\cite[Example~3.2]{lack2012enhanced}. Furthermore, in contrast to the leftmost adjunction, the rightmost adjunction extends to a 2-adjunction between $\FCAT$ and $\twoCAT$.
\end{example}

\begin{example}[$\bbn 1$]
	The terminal $\F$-category, denoted $\bbn 1$, is given by $\chord{\b 1} \iso \inchord{\b 1}$, where $\b 1$ is the terminal 2-category. It has a single object, a single morphism (which is necessarily tight), and a single 2-cell.
\end{example}

\begin{example}[$\fc F$]
	\label{eg:F}
	Since $\F$ is closed, it is itself canonically equipped with the structure of an $\F$-category, which we denote by $\fc F$. Explicitly, $\fc F$ is the $\F$-category whose objects are the full embeddings $A_\tau \hookrightarrow A_\lambda$; whose loose morphisms are functors $A_\lambda \to B_\lambda$; and whose 2-cells are natural transformations therebetween. The tight morphisms are the morphisms of $\F$ described in \eqref{tightness-preserving-functor}.
\end{example}

We will be interested in several examples of $\F$-categories whose objects are structured categories, whose tight morphisms preserve the structure strictly, and whose loose morphisms preserve the structure weakly (\eg up to isomorphism, or (co)laxly). We introduce the following notation accordingly.

\begin{notation}
	Define $\W \defeq \{ s, p, l, c \}$ (standing for \emph{strict}, \emph{pseudo}, \emph{lax}, and \emph{colax} respectively) to be the set of \emph{weaknesses}. By convention, an \emph{$l$-cell} is a 2-cell of the following shape:
	\[\begin{tikzcd}
		\cdot & \cdot \\
		\cdot & \cdot
		\arrow[from=1-1, to=1-2]
		\arrow[from=1-1, to=2-1]
		\arrow[from=1-2, to=2-2]
		\arrow[from=2-1, to=2-2]
		\arrow[shorten <=6pt, shorten >=6pt, Rightarrow, from=1-2, to=2-1]
	\end{tikzcd}\]
	Conversely, a \emph{$c$-cell} is a 2-cell of the following shape:
	\[\begin{tikzcd}
		\cdot & \cdot \\
		\cdot & \cdot
		\arrow[from=1-1, to=1-2]
		\arrow[from=1-1, to=2-1]
		\arrow[from=1-2, to=2-2]
		\arrow[from=2-1, to=2-2]
		\arrow[shorten <=6pt, shorten >=6pt, Rightarrow, from=2-1, to=1-2]
	\end{tikzcd}\]
	A \emph{$p$-cell} is an invertible $l$-cell. A \emph{$s$-cell} is an identity 2-cell. Define an involution:
	\[\overline\ph \defeq \{ s \mapsto s,
	p \mapsto p, l \mapsto c, c \mapsto l \} \colon \W \to \W\]
	We shall occasionally use that $\W$ is equipped with a partial order in which $s \leq p$, $p \leq l$, $p \leq c$.
\end{notation}

Note that we could have alternatively defined a $p$-cell to be an invertible $c$-cell, or defined two variants of invertible 2-cell depending on the primary direction. However, there is a canonical bijection between the two notions, so we shall simply take care when considering $\overline p$ to take inverses where appropriate.

\begin{example}[Categories]
	\label{ex:Cat+}
	A basic but central example is the (small) chordate $\F$-category $\chord{\Cat}$ of categories, functors and natural transformations. (We remind the reader that, according to our size conventions discussed in \cref{sect:size}, the objects of $\chord{\Cat}$ are \mbox{\emph{$\U$-small}} categories, and that the same is true of the categorical structures in the following examples.)
\end{example}

\begin{example}[Monoidal categories]
	\label{ex:FMonCat}
	For each pair $w' \leq w \in \W$ of weaknesses, there is an $\F$-category $\FMonCat {w'} w$ whose objects are monoidal categories, whose loose morphisms are $w$-weak monoidal functors, in which a morphism is tight if it is $w'$-weak, and whose 2-cells are monoidal natural transformations. For instance, when $w' = s$ and $w = p$, $\FMonCat s p$ is the $\F$-category whose loose morphisms are pseudo monoidal functors and whose tight morphisms are strict monoidal functors.

	By restricting to cartesian monoidal categories, we obtain $\F$-categories $\FCartCat {w'} w$.
\end{example}

\begin{example}[Double categories]
	\label{ex:FDblCat}
	For each pair $w' \leq w \in \W$ of weaknesses, there is an $\F$-category $\FDblCat {w'} w$ whose objects are pseudo double categories, whose loose morphisms are $w$-weak double functors, in which a morphism is tight if it is $w'$-weak, and whose 2-cells are natural transformations \cite[\S1.4]{grandis1999limits} (not to be confused with internal natural transformations in $\Cat$).
\end{example}

\begin{example}[Fibrations]
	\label{ex:FFib}
	For each pair $w' \leq w \in \W$ of weaknesses, there is an \mbox{$\F$-category} $\FFib {w'} w$ whose objects are (cloven) fibrations over arbitrary bases, whose loose morphisms are $w$-morphisms of fibrations and whose tight morphisms are $w'$-morphisms. Here, the pseudo morphisms of fibrations are the usual morphisms of fibrations: that is, commutative squares
	\[\begin{tikzcd}
		E & {E'} \\
		B & {B'}
		\arrow["{p'}", from=1-2, to=2-2]
		\arrow["e", from=1-1, to=1-2]
		\arrow["p"', from=1-1, to=2-1]
		\arrow["b"', from=2-1, to=2-2]
	\end{tikzcd}\]
	for which the top functor preserves cartesian morphisms. Such a pseudo morphism is \emph{strict} when the top functor preserves the cleavage. The \emph{lax} morphisms of fibrations coincide with the pseudo morphisms, whilst a \emph{colax} morphism of fibrations is simply a commutative square satisfying no additional compatibility condition.%
	\footnote{The reason for this choice of the lax and colax morphisms corresponds to the fact that fibrations are pseudo algebras for a colax-idempotent 2-monad~\cite{street1974fibrations}. For such a 2-monad, the lax and pseudo morphisms coincide.}
	In each $\F$-category $\FFib {w'} w$, the 2-cells are pairs of natural transformations $e \tto e'$ and $b \tto b'$ satisfying the evident compatibility condition. Dually, there are analogously defined $\F$-categories $\FOpfib {w'} w$ of opfibrations. Restricting to the discrete fibrations and discrete opfibrations, we respectively obtain full sub-$\F$-categories $\FDFib$ and $\FDOpfib$ -- note that each $w$-morphism of (op)fibrations between discrete (op)fibrations is strict, so these $\F$-categories are chordate.

	Each of the above $\F$-categories admits a sub-$\F$-category obtained by fixing the base category $B$. In particular, the $\F$-category $\FFib {w'} w(B)$ has objects the fibrations over $B$, morphisms (both tight and loose) those in $\FFib {w'} w$ having $b = 1_B$ and 2-cells those of $\FFib {w'} w$ having $b \Rightarrow b'$ the identity. Applying the same restrictions in the other examples yields $\F$-categories $\FOpfib {w'} w(B)$, $\FDFib(B)$, and $\FDOpfib(B)$.
\end{example}

\subsection{Enhanced 2-categorical limits}
\label{F-limits}

We now turn to weighted limits in $\F$-categories. We begin by looking at the simple case of tight limits, since these are the only limits that are required in our main examples.

\begin{example}[Tight limits]
	\label{tight-limits}
	Tight limits are simply 2-limits of diagrams of tight morphisms which also satisfy a universal property with respect to the loose morphisms. To be precise, let $\fc C$ be an $\F$-category, let $W \colon \tc J \to \CAT$ be a $\CAT$-weight, and let $D \colon \tc J \to \fc C_\tau$ be a 2-functor. We say that a 2-limit $\{W,D\}$ is a \emph{tight limit} if it is preserved by the inclusion 2-functor $\fc C_\tau \to \fc C_\lambda$. These are precisely $\chord W$-weighted limits where $\chord W \colon \chord{\tc J} \to \fc F$ is the $\F$-weight whose value at $J \in \fc J$ is the identity on $W(J)$.

	Particular cases of interest for us will be tight products, tight pullbacks, and tight comma objects. For example, the case of tight pullbacks is depicted below.
	\[\begin{tikzcd}
		C \\
		& {X \times_Y Z} & X \\
		& Z & Y
		\arrow["{\tp{p_1, p_2}}"{description, pos=0.4}, squiggly, from=1-1, to=2-2]
		\arrow["{p_1}", curve={height=-12pt}, squiggly, from=1-1, to=2-3]
		\arrow["{p_2}"', curve={height=12pt}, squiggly, from=1-1, to=3-2]
		\arrow["{\pi_1}", from=2-2, to=2-3]
		\arrow["{\pi_2}"', from=2-2, to=3-2]
		\arrow["f", from=2-3, to=3-3]
		\arrow["g"', from=3-2, to=3-3]
	\end{tikzcd}\]
	This is a commutative square of tight morphisms, satisfying the usual universal property of the 2-categorical pullback in $\fc C_\lambda$, and such that the induced map $\tp{p_1, p_2}$ to the pullback is tight if and only if both $p_1$ and $p_2$ are tight.
\end{example}

\begin{remark}[Enhanced 2-limits as 2-limits with properties]
	\label{enhanced-2-limits-as-2-limits}
	Tight limits are simply 2-limits satisfying two additional properties. The same is true of general $\F$-weighted limits, as we now explain (though we note that such limits will not be required until \cref{2-sketches}).

	First, observe that an $\F$-weight $W \colon \fc J \to \fc F$ has an underlying $\CAT$-weight ${W_\lambda \colon \fc J_{\lambda} \to \CAT}$, which sends $J \in \fc J$ to the loose part $W(J)_\lambda$ of the full embedding $W(J)_\tau \to W(J)_\lambda$. Furthermore, each $W$-weighted cone $\gamma \colon W \tto \fc J(X, D{-})$ has an underlying $W_{\lambda}$-weighted cone $\gamma_{\lambda} \colon W_{\lambda} \to \fc J_{\lambda}(X, D_{\lambda}-)$ with the property that its cone projections are tight at those $Y \in W(J)_{\tau} \subseteq W(J)_{\lambda}$; $W$-weighted cones amount to $W_{\lambda}$-weighted cones with this property.

	Accordingly, \cite[Proposition~3.6]{lack2012enhanced} establishes that the $W$-weighted limit of an \mbox{$\F$-functor} $D \colon \fc J \to \fc C$ is simply the $W_\lambda$-weighted 2-limit of $D_\lambda \colon \fc J_{\lambda} \to \fc C_{\lambda}$ with the additional properties that
	\begin{enumerate}
		\item for each $J \in \fc J$ and $Y \in W_\tau(J)$, the cone projection $\gamma_{J, Y} \colon \{ W_\lambda, D_\lambda \} \lto DJ$ is tight;
		\item a loose morphism $f \colon A \lto \{ W_\lambda, D_\lambda \}$ is tight if and only if $\gamma_{J, Y} \circ f \colon A \lto DJ$ is tight for each $Y \in W_\tau(J)$.
	\end{enumerate}
	In this case, one says that the cone projections $\gamma_{J, Y}$ \emph{jointly detect tightness.}
\end{remark}

\subsection{Enhanced 2-categorical structures}
\label{F-categorical-structures}

Many kinds of categorical structures -- for instance, pseudomonoids, pseudocategories, and fibrations -- can be defined internally to a sufficiently complete 2-category $\tc C$, with the classical notions recovered when $\tc C = \CAT$. In this section, we \emph{enhance} these structures to $\F$-categorical structures. These $\F$-categorical definitions subsume the 2-categorical ones, whilst also capturing many structures that lie outside the scope of 2-category theory. In \cref{F-sketches}, we will see how each of these structures is captured by the notion of an \emph{enhanced 2-sketch}, which will also automatically provide a notion of morphism for such $\F$-categorical structures.

\begin{definition}[Pseudomonoids]
	\label{skew-monoid}
	A \emph{pseudomonoid} in an $\F$-category $\fc C$ is a pseudomonoid in the 2-category $\fc C_\lambda$ in the sense of \textcite[\S3]{day1997monoidal} for which the specified products are tight products. Explicitly, a pseudomonoid comprises an object $M$ equipped with loose morphisms $\otimes \colon M^2 \lto M$ and $I \colon 1 \lto M$, and invertible 2-cells
	\[\begin{tikzcd}
		{M^3} & {M^2} \\
		{M^2} & M
		\arrow["{\otimes M}", squiggly, from=1-1, to=1-2]
		\arrow["{M\otimes}"', squiggly, from=1-1, to=2-1]
		\arrow["\alpha"{description}, shorten <=6pt, shorten >=6pt, Rightarrow, from=1-2, to=2-1]
		\arrow["\otimes", squiggly, from=1-2, to=2-2]
		\arrow["\otimes"', squiggly, from=2-1, to=2-2]
	\end{tikzcd}
	\hspace{4em}
	\begin{tikzcd}
		M & {M^2} & M \\
		& M
		\arrow["IM", squiggly, from=1-1, to=1-2]
		\arrow[""{name=0, anchor=center, inner sep=0}, "M"', from=1-1, to=2-2]
		\arrow[""{name=0p, anchor=center, inner sep=0}, phantom, from=1-1, to=2-2, start anchor=center, end anchor=center]
		\arrow[""{name=1, anchor=center, inner sep=0}, "\otimes"{description}, squiggly, from=1-2, to=2-2]
		\arrow[""{name=1p, anchor=center, inner sep=0}, phantom, from=1-2, to=2-2, start anchor=center, end anchor=center]
		\arrow[""{name=1p, anchor=center, inner sep=0}, phantom, from=1-2, to=2-2, start anchor=center, end anchor=center]
		\arrow["MI"', squiggly, from=1-3, to=1-2]
		\arrow[""{name=2, anchor=center, inner sep=0}, "M", from=1-3, to=2-2]
		\arrow[""{name=2p, anchor=center, inner sep=0}, phantom, from=1-3, to=2-2, start anchor=center, end anchor=center]
		\arrow["\lambda"', shift right=2, shorten <=5pt, shorten >=5pt, Rightarrow, from=1p, to=0p]
		\arrow["\rho"', shift right=2, shorten <=5pt, shorten >=5pt, Rightarrow, from=2p, to=1p]
	\end{tikzcd}\]
	satisfying the following two equations.
	\begin{equation}\label{eq:pseudomon1}
	\begin{tikzcd}[column sep=small]
		& {M^3} && {M^2} \\
		{M^4} && {M^2} && M \\
		& {M^3} && {M^2}
		\arrow["mM", squiggly, from=1-2, to=1-4]
		\arrow["Mm"{description}, squiggly, from=1-2, to=2-3]
		\arrow["{=}"{description}, draw=none, from=1-2, to=3-2]
		\arrow["\alpha", shorten <=6pt, shorten >=6pt, Rightarrow, from=1-4, to=2-3]
		\arrow["m", squiggly, from=1-4, to=2-5]
		\arrow["{mM^2}", squiggly, from=2-1, to=1-2]
		\arrow["{M^2m}"', squiggly, from=2-1, to=3-2]
		\arrow["m"{description}, squiggly, from=2-3, to=2-5]
		\arrow["\alpha", shorten <=6pt, shorten >=6pt, Rightarrow, from=2-3, to=3-4]
		\arrow["mM"{description}, squiggly, from=3-2, to=2-3]
		\arrow["Mm"', squiggly, from=3-2, to=3-4]
		\arrow["m"', squiggly, from=3-4, to=2-5]
	\end{tikzcd}
	\quad = \quad
	\begin{tikzcd}[column sep=small]
		& {M^3} && {M^2} \\
		{M^4} && {M^3} && M \\
		& {M^3} && {M^2}
		\arrow["mM", squiggly, from=1-2, to=1-4]
		\arrow["{\alpha M}", shorten <=6pt, shorten >=6pt, Rightarrow, from=1-2, to=2-3]
		\arrow["m", squiggly, from=1-4, to=2-5]
		\arrow["\alpha", shorten <=16pt, shorten >=16pt, Rightarrow, from=1-4, to=3-4]
		\arrow["{mM^2}", squiggly, from=2-1, to=1-2]
		\arrow["MmM"{description}, squiggly, from=2-1, to=2-3]
		\arrow["{M^2m}"', squiggly, from=2-1, to=3-2]
		\arrow["mM"{description}, squiggly, from=2-3, to=1-4]
		\arrow["{\alpha M}", shorten <=6pt, shorten >=6pt, Rightarrow, from=2-3, to=3-2]
		\arrow["Mm"{description}, squiggly, from=2-3, to=3-4]
		\arrow["Mm"', squiggly, from=3-2, to=3-4]
		\arrow["m"', squiggly, from=3-4, to=2-5]
	\end{tikzcd}
	\end{equation}
	\begin{equation}\label{eq:pseudomon2}
	\begin{tikzcd}
		&&& {M^2} \\
		{M^2} && {M^3} && M \\
		&&& {M^2}
		\arrow["m", squiggly, from=1-4, to=2-5]
		\arrow["\alpha", shorten <=13pt, shorten >=13pt, Rightarrow, from=1-4, to=3-4]
		\arrow[""{name=0, anchor=center, inner sep=0}, "{1_{M^2}}", curve={height=-24pt}, from=2-1, to=1-4]
		\arrow["MiM"{description}, squiggly, from=2-1, to=2-3]
		\arrow[""{name=1, anchor=center, inner sep=0}, "{1_{M^2}}"', curve={height=24pt}, from=2-1, to=3-4]
		\arrow["mM"{description}, squiggly, from=2-3, to=1-4]
		\arrow["Mm"{description}, squiggly, from=2-3, to=3-4]
		\arrow["m"', squiggly, from=3-4, to=2-5]
		\arrow["{\rho M}"', shorten <=8pt, shorten >=8pt, Rightarrow, from=0, to=2-3]
		\arrow["{M\lambda}"', shorten <=8pt, shorten >=8pt, Rightarrow, from=2-3, to=1]
	\end{tikzcd}
	\quad = \quad 1_m
	\end{equation}
	A pseudomonoid is \emph{cartesian} if ${\otimes} \colon M \times M \lto M$ is right adjoint to the diagonal and $I \colon 1 \lto M$ is right adjoint to the unique morphism to the terminal object.
\end{definition}

\begin{example}
	\label{pseudomonoid-examples}
	Specialising the definition of pseudomonoid (\cref{skew-monoid}) to different $\F$-categories recovers various concepts of interest in the literature.
	\begin{center}
	\begin{tabu}{ccc}
		$\F$-category & Pseudomonoid & Reference \\
		\hline
		$\chord\Cat$ &  Monoidal cat. & \cite[\S3]{day1997monoidal} \\
		$\FMonCat s p$ & Braided monoidal cat. & \cite[\S3]{joyal1986braided} \\
		$\FMonCat s l$ & Duoidal cat. & \cite[Definition~6.1]{aguiar2010monoidal} \\
		$\FMonCat s c$ & Duoidal cat. & \cite[Proposition~6.73]{aguiar2010monoidal} \\
		$\FDblCat s c$ & Colax double cat. & \cites[\S5.5]{grandis2004adjoint}[Definition~4.1]{gambino2024monoidal} \\
		$\FFib s p$ & Monoidal fibration & \cites[Definition~12.1]{shulman2008framed}[Proposition~3.1]{moeller2020monoidal}
	\end{tabu}
	\end{center}
	Similarly, specialising the definition of cartesian pseudomonoid recovers other concepts of interest.
	\begin{center}
	\begin{tabu}{ccc}
		$\F$-category & Cartesian pseudomonoid & Reference \\
		\hline
		$\FDblCat s l$ & Precartesian double cat. & \cite[Definition~4.1.1]{aleiferi2018cartesian} \\
		$\FDblCat s p$ & Cartesian double cat. & \cite[Definition~4.2.1]{aleiferi2018cartesian}
	\end{tabu}
	\end{center}
\end{example}

\begin{remark}
	\label{skew-examples}
	There are many variations on the notion of pseudomonoid that one may consider. For instance, a \emph{left-skew monoid} in an $\F$-category is defined in the same fashion as a pseudomonoid except that the 2-cells $\alpha$, $\lambda$ and $\rho$ are not required to be invertible, and there are three additional equations; in a \emph{right-skew monoid}, these 2-cells have their orientations reversed (see \cites{szlachanyi2012skew}[\S4]{lack2012skew}). For instance, right-skew monoids in the \mbox{$\F$-category} $\FFib p {c}(B)$ are closely related to the lax monoidal fibrations of \textcite{zawadowski2009lax} (though the former are subject to the three additional equations aforementioned, whilst the latter satisfies a further normality condition on the unit).
\end{remark}

\begin{definition}[Pseudocategories]
	\label{pseudocategory}
	A \emph{pseudocategory} in an $\F$-category $\fc C$ is a pseudocategory in the 2-category $\fc C_\lambda$ in the sense of \textcite[\S1]{martins2006pseudo} for which the source and target morphisms are tight, and for which the specified pullbacks are tight pullbacks.

	To begin with, a pseudocategory comprises objects $C_0$ and $C_1$ equipped with tight morphisms $s, t \colon C_1 \to C_0$.  We require that the tight pullback $C_2 \defeq C_1 \times_{C_0} C_1$ depicted below exists,
	\[\begin{tikzcd}
		{C_2} & {C_1} \\
		{C_1} & {C_0}
		\arrow["{\pi^1_1}", from=1-1, to=1-2]
		\arrow["{\pi^1_2}"', from=1-1, to=2-1]
		\arrow["\lrcorner"{anchor=center, pos=0.125}, draw=none, from=1-1, to=2-2]
		\arrow["t", from=1-2, to=2-2]
		\arrow["s"', from=2-1, to=2-2]
	\end{tikzcd}\]
	and also that the iterated tight pullbacks \[C_3 \defeq C_1 \times_{C_{0}} C_1 \times_{C_{0}} C_1 \hspace{1.5cm} C_4 \defeq C_1 \times_{C_{0}} C_1 \times_{C_{0}} C_1 \times_{C_{0}} C_1\] of the source and target morphisms exist.

	Furthermore, a pseudocategory comprises a loose morphism $i \colon C_0 \lto C_1$ satisfying the reflexivity equations $s \circ i = 1_{C_{0}} = t \circ i$, and a loose morphism
	$c \colon C_2 \lto C_1$ satisfying the compatibility equations $s \circ c = s \circ \pi^1_1$ and $t \circ c = t \circ \pi^1_2$ together with
	invertible 2-cells
	\[\begin{tikzcd}
		{C_3} & {C_2} \\
		{C_2} & {C_1}
		\arrow["{\langle c,1_{C_1}\rangle}", squiggly, from=1-1, to=1-2]
		\arrow["{\langle 1_{C_1},c\rangle}"', squiggly, from=1-1, to=2-1]
		\arrow["\alpha"{description}, shorten <=6pt, shorten >=6pt, Rightarrow, from=1-2, to=2-1]
		\arrow["c", squiggly, from=1-2, to=2-2]
		\arrow["c"', squiggly, from=2-1, to=2-2]
	\end{tikzcd}
	\hspace{3cm}
	\begin{tikzcd}
		{C_1} & {C_2} & {C_1} \\
		& {C_1}
		\arrow["{\langle i,1_{C_1}\rangle}", squiggly, from=1-1, to=1-2]
		\arrow[""{name=0, anchor=center, inner sep=0}, "{1_{C_1}}"', from=1-1, to=2-2]
		\arrow[""{name=0p, anchor=center, inner sep=0}, phantom, from=1-1, to=2-2, start anchor=center, end anchor=center]
		\arrow[""{name=1, anchor=center, inner sep=0}, "c"{description}, squiggly, from=1-2, to=2-2]
		\arrow[""{name=1p, anchor=center, inner sep=0}, phantom, from=1-2, to=2-2, start anchor=center, end anchor=center]
		\arrow[""{name=1p, anchor=center, inner sep=0}, phantom, from=1-2, to=2-2, start anchor=center, end anchor=center]
		\arrow["{\langle 1_{C_1},i\rangle}"', squiggly, from=1-3, to=1-2]
		\arrow[""{name=2, anchor=center, inner sep=0}, "{1_{C_1}}", from=1-3, to=2-2]
		\arrow[""{name=2p, anchor=center, inner sep=0}, phantom, from=1-3, to=2-2, start anchor=center, end anchor=center]
		\arrow["\lambda"', shift right=2, shorten <=5pt, shorten >=5pt, Rightarrow, from=1p, to=0p]
		\arrow["\rho"', shift right=2, shorten <=5pt, shorten >=5pt, Rightarrow, from=2p, to=1p]
	\end{tikzcd}\]
	satisfying the following two equations.\footnotemark{}
	\footnotetext{We have reversed the direction of $\alpha$ and $\rho$ in \cref{pseudocategory} in relation to \cite{martins2006pseudo} for consistency with the definition of left-skew monoid (\cref{skew-monoid}).}
	\begin{equation}\label{eq:pseudocat1}
	\begin{tikzcd}[column sep=small,row sep=large]
		& {C_3} && {C_2} \\
		{C_4} && {C_2} && {C_1} \\
		& {C_3} && {C_2}
		\arrow["{\langle c,1_{C_1}\rangle}", squiggly, from=1-2, to=1-4]
		\arrow["{\langle 1_{C_1},c\rangle}"{description}, squiggly, from=1-2, to=2-3]
		\arrow["{=}"{description}, draw=none, from=1-2, to=3-2]
		\arrow["\alpha", shorten <=6pt, shorten >=6pt, Rightarrow, from=1-4, to=2-3]
		\arrow["c", squiggly, from=1-4, to=2-5]
		\arrow["{\langle c,1_{C_1},1_{C_1}\rangle}", squiggly, from=2-1, to=1-2]
		\arrow["{\langle 1_{C_1},1_{C_1},c\rangle}"', squiggly, from=2-1, to=3-2]
		\arrow["c"{description}, squiggly, from=2-3, to=2-5]
		\arrow["\alpha", shorten <=6pt, shorten >=6pt, Rightarrow, from=2-3, to=3-4]
		\arrow["{\langle c,1_{C_1}\rangle}"{description}, squiggly, from=3-2, to=2-3]
		\arrow["{\langle 1_{C_1},c\rangle}"', squiggly, from=3-2, to=3-4]
		\arrow["c"', squiggly, from=3-4, to=2-5]
	\end{tikzcd}
	=\hspace{-1em}
	\begin{tikzcd}[row sep=large]
		& {C_3} && {C_2} \\
		{C_4} && {C_3} && {C_1} \\
		& {C_3} && {C_2}
		\arrow["{\langle c,1_{C_1}\rangle}", squiggly, from=1-2, to=1-4]
		\arrow["{\alpha 1_{C_1}}", shorten <=6pt, shorten >=6pt, Rightarrow, from=1-2, to=2-3]
		\arrow["c", squiggly, from=1-4, to=2-5]
		\arrow["\alpha", shorten <=13pt, shorten >=13pt, Rightarrow, from=1-4, to=3-4]
		\arrow["{\langle c,1_{C_1},1_{C_1}\rangle}", squiggly, from=2-1, to=1-2]
		\arrow["{\langle 1_{C_1},c,1_{C_1}\rangle}"{description}, squiggly, from=2-1, to=2-3]
		\arrow["{\langle 1_{C_1},1_{C_1},c\rangle}"', squiggly, from=2-1, to=3-2]
		\arrow["{\langle c,1_{C_1}\rangle}"{description}, squiggly, from=2-3, to=1-4]
		\arrow["{1_{C_1} \alpha}", shorten <=6pt, shorten >=6pt, Rightarrow, from=2-3, to=3-2]
		\arrow["{\langle 1_{C_1},c\rangle}"{description}, squiggly, from=2-3, to=3-4]
		\arrow["{\langle 1_{C_1},c\rangle}"', squiggly, from=3-2, to=3-4]
		\arrow["c"', squiggly, from=3-4, to=2-5]
	\end{tikzcd}
	\end{equation}
	\begin{equation}\label{eq:pseudocat2}
	\begin{tikzcd}
		&&& {C_2} \\
		{C_2} && {C_3} && {C_1} \\
		&&& {C_2}
		\arrow["c", squiggly, from=1-4, to=2-5]
		\arrow["\alpha", shorten <=13pt, shorten >=13pt, Rightarrow, from=1-4, to=3-4]
		\arrow[""{name=0, anchor=center, inner sep=0}, "{1_{C_2}}", curve={height=-24pt}, from=2-1, to=1-4]
		\arrow["{\langle 1_{C_1},c,1_{C_1} \rangle}"{description}, squiggly, from=2-1, to=2-3]
		\arrow[""{name=1, anchor=center, inner sep=0}, "{1_{C_2}}"', curve={height=24pt}, from=2-1, to=3-4]
		\arrow["{\langle c,1_{C_1} \rangle}"{description}, squiggly, from=2-3, to=1-4]
		\arrow["{\langle 1_{C_1},c \rangle}"{description}, squiggly, from=2-3, to=3-4]
		\arrow["c"', squiggly, from=3-4, to=2-5]
		\arrow["{\langle \rho, 1_{C_1}\rangle}"', shorten <=8pt, shorten >=8pt, Rightarrow, from=0, to=2-3]
		\arrow["{\langle 1_{C_1}, \lambda \rangle}"', shorten <=8pt, shorten >=8pt, Rightarrow, from=2-3, to=1]
	\end{tikzcd}
	\quad = \quad 1_c
	\qedshift
	\end{equation}
\end{definition}

\begin{example}
	\label{pseudocategory-examples}
	Specialising the definition of pseudocategory (\cref{pseudocategory}) to different $\F$-categories recovers various concepts of interest in the literature.
	\begin{center}
		\begin{tabu}{ccX[c]}
			$\F$-category & Pseudocategory & Reference \\
			\hline
			$\chord\Cat$ & Pseudo double cat. & \cite[\S7.1]{grandis1999limits} \\
			$\FMonCat s p$ & Monoidal double cat. & \cite[Remark~2.12]{shulman2010constructing} \\
			$\FDblCat s l$ & Horizontal intercategory & \cite[Definition~3.2]{grandis2015intercategories} \\
			$\FDblCat s c$ & Vertical intercategory & \cite[Definition~3.3]{grandis2015intercategories} \\
			$\FFib s p$ & Double fibration & \cite[Definition~2.1]{cruttwell2022double} \\
			$\FOpfib s p$ & Double opfibration & \cite[Remark~2.12]{cruttwell2022double} \\
			$\FDFib$ & Discrete double fibration & \cites[Definition~3.43]{cruttwell2022double}[Proposition~2.2.5]{lambert2021discrete} \\
			$T\h\fcat{Alg}_{s, c}$ & Colaxly $T$-structured pseudocategory & \cite[Definition~5.14]{capucci2024contextads}
		\end{tabu}
	\end{center}
	In the final example, $T$ is a 2-monad on a 2-category, and $T\h\fcat{Alg}_{s, c}$ is the $\F$-category whose objects are strict $T$-algebras, whose loose morphisms are the colax morphisms of $T$-algebras, in which a morphism is tight if it is strict, and whose 2-cells are $T$-algebra transformations~\cite[Example~3.3]{lack2012enhanced}.
\end{example}

Before defining fibrations in $\F$-categories, let us recall from \cite[\S3.1]{loregian2020categorical} that a morphism ${p \colon A \to B}$ in a 2-category $\tc C$ is said to be a fibration when, for all $X \in \tc C$, the functor $\tc C(X,p) \colon \tc C(X,A) \to \tc C(X,B)$ is a fibration in $\Cat$, and, for all morphisms $f \colon Y \to X$, the commutative square
\[\begin{tikzcd}
	{\tc C(X,A)} & {\tc C(Y,A)} \\
	{\tc C(X,B)} & {\tc C(Y,B)}
	\arrow["{\tc C(X,p)}"', from=1-1, to=2-1]
	\arrow["{\tc C(f,A)}", from=1-1, to=1-2]
	\arrow["{\tc C(Y,p)}", from=1-2, to=2-2]
	\arrow["{\tc C(f,B)}"', from=2-1, to=2-2]
\end{tikzcd}\]
is a pseudo morphism of fibrations. A more restrictive notion is that of a \emph{discrete fibration} in a 2-category: $p$ is said to be a discrete fibration just when each $\tc C(X,p) \colon \tc C(X,A) \to \tc C(X,B)$ is a discrete fibration of categories. Opfibrations and discrete opfibrations in $\tc C$ are defined analogously.

\begin{definition}
	\label{fibration}
	A \emph{fibration}/\emph{discrete fibration} in an $\F$-category $\fc C$ is a tight morphism $p \colon A \to B$ which is a fibration/discrete fibration in the 2-category $\fc C_\lambda$. Likewise, an \emph{opfibration}/\emph{discrete opfibration} in $\fc C$ is a tight morphism which is an opfibration/discrete opfibration in the 2-category $\fc C_\lambda$.
\end{definition}

\begin{example}
	\label{fibration-examples}
	Specialising the definition of (op)fibration (\cref{fibration}) to different $\F$-categories recovers several known flavours of (op)fibration.
	\begin{center}
	\begin{tabu}{cccX[c]}
		$\F$-category & (Op)fibration & Reference \\
		\hline
		$\chord\Cat$ & (Op)fibration & \cite[Proposition~3.6]{gray1966fibred} \\
		$\FMonCat s p$ & Monoidal (op)fibration & See (1) below. \\
		$\FDblCat s p$ & Double (op)fibration & \cite[Corollary~4.7]{cruttwell2022double} \\
		$\fcat{TanCat}_{s, p}$ & Tangent (op)fibration & See (2) below. \\
		$\FFib p c(B)$ & Fiberwise (op)fibration in $\b{Fib}(B)$ & See (3) below.
	\end{tabu}
	\end{center}
	\begin{enumerate}
		\item The fact that fibrations in $\FMonCat s p$ are precisely monoidal fibrations does not seem to have been noted before, but will be an instance of \cref{Mod-skew-symmetry-tight}, given that monoidal fibrations are pseudomonoids in $\FFib s p$.
		\item Here, $\fcat{TanCat}_{s, p}$ is the $\F$-category of tangent categories arising from the 2-category denoted by $\b{TNGCAT}_{\iso, \tx{pp}}$ in \cite[Lemma~3.25]{lanfranchi2024grothendieck} by equipping as the tight morphisms the strict morphisms of tangent categories~\cite{cockett2014differential}. \cite[Theorem~3.26]{lanfranchi2024grothendieck} then states that fibrations in $\fcat{TanCat}_{s, p}$ are precisely tangent fibrations in the sense of \cite[Definition~5.2]{cockett2018differential}.
		\item A fibrewise opfibration over a category $B$ is simply a commutative triangle
		\[\begin{tikzcd}
			E && {E'} \\
			& B
			\arrow["e", from=1-1, to=1-3]
			\arrow["p"', from=1-1, to=2-2]
			\arrow["{p'}", from=1-3, to=2-2]
		\end{tikzcd}\]
		in which, for each $b \in B$, the restriction $e_b \colon E_b \to E'_b$ of $e$ to the fibre over $b$ is an opfibration~\cite[Definition~2.1]{cigoli2020fibered}. The main case of interest is when $p$ and $p'$ are fibrations and $e$ is a pseudo morphism of fibrations: in \cite{cigoli2020fibered}, these are simply called \emph{fibrewise opfibrations in $\b{Fib}(B)$}. Due to the lack of any conditions relating the opcartesian liftings and the fibrations, they are not necessarily opfibrations in the 2-category $\b{Fib}(B)$ of fibrations and pseudo morphisms over $B$. Instead, it follows from \cite[Propositions~2.5 \& 2.7]{cigoli2020fibered} that they are precisely opfibrations in the $\F$-category $\FFib p c(B)$.
		\qedhere
	\end{enumerate}
\end{example}

\begin{remark}
	The reader may note that several of the previous examples may be presented in two ways: either as a $X$ in an $\F$-category of $Y$s, or as a $Y$ in an $\F$-category of $X$s. This is not a coincidence: that this is a general phenomenon will be shown in \cref{multiple-perspectives}.
\end{remark}

\section{Enhanced limit 2-sketches}
\label{sketches}

In the previous section, we examined several $\F$-categorical structures. In this section, we will show that they each arise as instances of the general theory of \emph{enhanced limit 2-sketches}. To begin, we recall the notion of $\V$-enriched limit sketch, following \textcite{kelly1982basic}, before specialising to the case of $\F$-enriched sketches. We then introduce the $\F$-categories of models of an $\F$-sketch, and investigate them in the cases of the $\F$-sketches for pseudomonoids, pseudocategories, and fibrations.

\subsection{Enriched limit sketches}

Limit sketches, introduced by \textcite{ehresmann1968esquisses}, provide a simple category-theoretic method for describing essentially algebraic structures. A limit sketch $\sk S$ comprises a small category $\ct S$ together with a collection of cones that specify the shapes of limits. Enriched limit sketches, introduced in \cite[Chapter~6]{kelly1982basic}\footnotemark{} and recalled below, naturally extend the concept of limit sketch to enriched category theory, the only significant change being the use of weighted cones rather than ordinary cones.
\footnotetext{In introducing sketches, \citeauthor{kelly1982basic} additionally assumes that $\V$ is locally bounded, but this is not necessary for the basic definitions presented here. However, we note that $\F$ is \lfp{}, hence locally bounded, so the complete theory \ibid indeed applies.}%

\begin{definition}[{\cite[\S6.3]{kelly1982basic}}]
	\label{sketch}
	A \emph{limit $\V$-sketch} $\sk S$ comprises a small $\V$-category $\ct S$ equipped with a class $\Gamma$ of weighted cones:
	\begin{equation}
		\label{cone}
		\Big\{ \big(W_i \colon \ct J_i \to \V , \quad D_i \colon \ct J_i \to \ct S , \quad X_i \in \ct S , \quad \gamma_i \colon W_i \tto \ct S(X_i, D_i{-}) \big) \Big\}_{i \in \Gamma}
	\end{equation}
	A morphism $F \colon \sk S \to \sk T$ of $\V$-sketches is a $\V$-functor $M \colon \ct S \to \ct T$ for which, given each weighted cone $(W_i, D_i, X_i, \gamma_i)$ in $\Gamma_{\sk S}$, the weighted cone $(W_i, M \c D_i, M(X_i), M_{X_i, D_i{-}} \c \gamma_i)$ belongs to $\Gamma_{\sk T}$. Limit $\V$-sketches, their morphisms, and arbitrary $\V$-natural transformations form a 2-category $\VSK$.

	For a class $\Psi$ of weights, a \emph{$\Psi$-limit $\V$-sketch} is one for which each weight $W_i$ belongs to $\Psi$. We denote by $\Psi\SK$ the full sub-2-category of $\VSK$ spanned by the $\Psi$-limit $\V$-sketches.
\end{definition}

\begin{remark}
	One may also consider the notion of \emph{mixed $\V$-sketch}, which permits the specification of weighted cocones as well as weighted cones. However, herein, we restrict our attention to limit $\V$-sketches. Consequently, we shall often drop the prefix \emph{limit} and simply refer to limit $\V$-sketches as $\V$-\emph{sketches}.
\end{remark}

\begin{definition}
	\label{model}
	Let $\sk S$ and $\sk C$ be $\V$-sketches. A \emph{model} for $\sk S$ in $\sk C$ is a sketch morphism $M \colon \sk S \to \sk C$. We denote by $\Mod(\sk S, \sk C)$ the full sub-$\V$-category of the $\V$-functor $\V$-category $[\ct S, \ct C]$ spanned by the models of $\sk S$ in $\sk C$.
\end{definition}

Each small $\V$-category $\ct C$ can be viewed in a canonical way as a $\V$-sketch, whose weighted cones comprise all the weighted limit cones in $\ct C$. It is consequently common to consider small $\V$-categories implicitly as $\V$-sketches and, for instance, to talk of a \emph{model of a $\V$-sketch in a $\V$-category}.

\begin{definition}[{\cite[\S6.3]{kelly1982basic}}]
	Let $\Psi$ be a class of $\V$-weights. A \emph{$\Psi$-limit theory} is the $\Psi$-limit sketch associated to a small $\Psi$-complete $\V$-category.
\end{definition}

We will use the same notation for a $\Psi$-complete $\V$-category and its associated $\Psi$-limit sketch. We shall need a basic observation for \cref{2-sketches} regarding the 2-functoriality of forming 2-categories of $\V$-sketches.

\begin{definition}[{\cite[Theorem~2.2]{eilenberg1966closed}}]
	Denote by $\b{CLMONCAT}_l$ the 2-category of closed monoidal categories, lax monoidal functors, and monoidal natural transformations\footnote{Note that lax monoidal functors and monoidal natural transformations automatically cohere appropriately with the closed structure~\cite[Theorem~5.1]{eilenberg1966closed}.}.
\end{definition}

The assignment, for each closed monoidal category $\V$, to the 2-category of $\V$-categories extends to a 2-functor $\ph\h\CAT \colon \b{CLMONCAT}_l \to 2\h\CAT$~\cite[Proposition~6.3]{eilenberg1966closed}.

\begin{proposition}[Change of base]
	\label{change-of-base}
	The 2-functor $\ph\h\CAT \colon \b{CLMONCAT}_l \to 2\h\CAT$ lifts to a 2-functor $\ph\SK \colon \b{CLMONCAT}_l \to 2\h\CAT$ assigning to each $\V$ the 2-category of $\V$-sketches.
\end{proposition}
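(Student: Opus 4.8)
The plan is to transport weighted cones along change of base. Recall that $\ph\h\CAT$ sends a lax monoidal functor $T \colon \V \to \V'$ to the change-of-base $2$-functor $T\h\CAT \colon \V\h\CAT \to \V'\h\CAT$, which carries a $\V$-category $\ct A$ to the $\V'$-category with the same objects and with hom-objects $T(\ct A(a,b))$, and recall further that $T$ acquires a canonical closed structure \cite[Theorem~5.1]{eilenberg1966closed}, giving an identity-on-objects $\V'$-functor $\widehat T \colon T\h\CAT(\V) \to \V'$ whose action on hom-objects is the comparison $T[A,B] \to [TA, TB]$. Given a limit $\V$-sketch $\sk S = (\ct S, \Gamma)$ and a cone $(W_i \colon \ct J_i \to \V,\; D_i,\; X_i,\; \gamma_i) \in \Gamma$, I would form its \emph{transport}: the $\V'$-weighted cone on $T\h\CAT(\ct S)$ with weight $\widehat T \c (T\h\CAT)(W_i) \colon T\h\CAT(\ct J_i) \to \V'$, with diagram $(T\h\CAT)(D_i)$, with object $X_i$, and with $\V'$-natural transformation $(T\h\CAT)(\gamma_i)$ whiskered by $\widehat T$; this is well-typed thanks to the standard coherence $\widehat T \c (T\h\CAT)\big(\ct A(a, F{-})\big) = \big(T\h\CAT(\ct A)\big)\big(a, (T\h\CAT)(F){-}\big)$ of change of base with the closed structure. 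Then $T\SK(\sk S)$ is defined to be $T\h\CAT(\ct S)$ equipped with a suitable class of cones generated by these transports (see below), and $T\SK$ acts on sketch morphisms and on $\V$-natural transformations by applying $T\h\CAT$.

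The routine part of the verification is as follows. First, that the transported data is genuinely a $\V'$-weighted cone is a coherence check with the closed structure on $T$. Second, that $T\SK$ carries a sketch morphism $M \colon \sk S \to \sk T$ to a morphism of $\V'$-sketches: here the point is that a sketch morphism leaves the shape $(\ct J_i, W_i)$ of each cone unchanged, so $(T\h\CAT)(M)$ sends the transport of the $i$-th cone of $\sk S$ to the transport of its image cone $(W_i,\, M \c D_i,\, M(X_i),\, M_{X_i, D_i{-}} \c \gamma_i)$ in $\sk T$. Third, on $2$-cells there is no cone condition to check, so $T\h\CAT$ of a $\V$-natural transformation suffices. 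Finally, forgetting cone classes recovers $T\h\CAT$, which is the sense in which $\ph\SK$ lifts $\ph\h\CAT$.

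I expect the main obstacle to be the $2$-dimensional part of functoriality. Given a monoidal natural transformation $\sigma \colon T \tto T'$, one must produce a $2$-cell $\sigma\SK \colon T\SK \tto T'\SK$ in $2\h\CAT$, whose component at $\sk S$ should have underlying $\V'$-functor the $2$-naturality component $\sigma_{\ct S} \colon T\h\CAT(\ct S) \to T'\h\CAT(\ct S)$ of $\sigma\h\CAT$. The difficulty is that change of base does \emph{not} fix cone shapes: both the shape category $\ct J_i$ and the weight $W_i$ are transported, and $T\h\CAT(\ct J_i)$ and $T'\h\CAT(\ct J_i)$ have distinct hom-objects $T(\ct J_i(a,b))$ and $T'(\ct J_i(a,b))$, so the $\sigma_{\ct S}$-image of a $T$-transported cone is not a $T'$-transported cone on the nose --- it is related to one only through reindexing along $\sigma_{\ct J_i}$ together with the modification of weights determined by the components of $\sigma$. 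Consequently the cone class of $T\SK(\sk S)$ must be saturated under exactly the operations induced by change-of-base functors and natural transformations: large enough that every $\sigma_{\ct S}$ is a morphism of $\V'$-sketches and every $\sigma\SK$ is $2$-natural in $\sk S$, yet controlled enough that the laws $(\mathrm{id}_\V)\SK = \mathrm{id}$ and $(T' \c T)\SK = T'\SK \c T\SK$ still hold --- the latter relying on the functoriality $\widehat{\mathrm{id}_\V} = \mathrm{id}$ and $\widehat{T' \c T} = \widehat{T'} \c (T'\h\CAT)(\widehat T)$ of the canonical closed structure, which identifies $(T' \c T)$-transports with iterated transports. Pinning down this saturation and checking that it behaves well is where the real work lies; once it is in place, the remaining coherence equations for $\sigma\SK$ reduce to the $2$-functoriality of $\ph\h\CAT$ that is already in hand.
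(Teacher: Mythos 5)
Your construction is, for the one\hyphen dimensional part, exactly the paper's: your $\widehat T$ is the $\W$-functor the paper calls $F^\W$, the transported cone $\big(F^\W \c F\h\CAT(W_i),\, F\h\CAT(D_i),\, X_i,\, F^\W \c F\h\CAT(\gamma_i)\big)$ is precisely the one the paper writes down, and your argument that sketch morphisms are preserved (because they fix the shape and weight of each cone) is the paper's argument. The divergence is in the two-dimensional part. The paper takes the cone class of $F\SK(\sk S)$ to be exactly the set of transported cones -- no saturation -- and dismisses the monoidal-transformation case in one sentence (``it is straightforward to check that $\phi\h\CAT$ restricts to a $2$-natural transformation $F\SK \tto G\SK$''). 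Your diagnosis of why this step is delicate is sound: with the strict notion of sketch morphism of \cref{sketch}, the $\phi_{\ct S}$-image of an $F$-transported cone has weight $F^\W \c F\h\CAT(W_i)$ over the shape $F\h\CAT(\ct J_i)$, whereas the specified cones of $G\SK(\sk S)$ have weight $G^\W \c G\h\CAT(W_i)$ over $G\h\CAT(\ct J_i)$, and the two are related only by reindexing along $\phi_{\ct J_i}$ together with the weight morphism with components $\phi_{W_i(J)}$. The paper does not engage with this point.

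That said, your proposed remedy -- enlarging the cone class of $T\SK(\sk S)$ to a saturation under all such reindexings -- is not the paper's construction and is left entirely unexecuted in your write-up (``pinning down this saturation \ldots is where the real work lies''), so as a proof it stops exactly at the step you identify as the crux. It would also sit badly with the rest of the paper: \cref{restricted-change-of-base} defines $\Psi_F$ to be exactly $\{ F^\W \c F\h\CAT(W) \mid W \in \Psi \}$, not a saturation of it, and the explicit free/cofree $\F$-sketches of \cref{2-sketches} (\eg in \cref{cofree-F-sketch}, which uses the $2$-cell part of this proposition to lift an adjunction) are computed from the bare transports. To match the paper you should keep $\Gamma_{F\SK(\sk S)}$ as the plain set of transports and then either justify the restriction claim directly (making precise in what sense a reindexed specified cone still counts as specified) or verify it for the particular monoidal transformations the paper actually feeds into this proposition. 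In short: the construction is right and identical to the paper's; the $2$-cell verification is genuinely the hard point, and neither your proposal nor the paper's proof completes it -- but your fix changes the object being constructed, which the paper's later results do not permit.
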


\begin{proof}
	Let $F \colon \V \to \W$ be a lax monoidal functor between monoidal categories. Observe that there is a $\W$-functor $F^\W \colon F\h\CAT(\V) \to \W$ given on objects by $F$ and on hom-objects by the canonical morphism $F(\V(X, Y)) \to \W(FX, FY)$ induced by the lax monoidal structure of $F$. Consequently, each $\V$-weight $W \colon \ct J \to \V$ induces a $\W$-weight $F^\W \c F\h\CAT(W) \colon F\h\CAT(\ct J) \to \W$. Now let $\sk S$ be a $\V$-sketch. The $\W$-category $F\h\CAT(\ct S)$ may be equipped with the structure of a $\W$-sketch: for each $\V$-weighted cone $(W_i, D_i, X_i, \gamma_i)$ as in \eqref{cone} in $\sk S$, we define a $\W$-weighted cone as follows.
	\[\big( F^\W \c F\h\CAT(W_i), F\h\CAT(D_i), X_i, F^\W \c F\h\CAT(\gamma_i) \big)\]
	It is then clear that each morphism of $\V$-sketches induces a morphism of $\W$-sketches. This assignment defines a 2-functor $F\SK \colon \V\SK \to \W\SK$. Finally, given a monoidal transformation $\phi \colon F \tto G \colon \V \to \W$, it is straightforward to check that the 2-natural transformation $\phi\h\CAT \colon F\h\CAT \tto G\h\CAT$ restricts to a 2-natural transformation $F\SK \tto G\SK$.
\end{proof}

\begin{corollary}
	\label{restricted-change-of-base}
	Let $F \colon \V \to \W$ be a lax monoidal functor between closed monoidal categories, and let $\Psi$ be a class of $\V$-weights. There is an induced 2-functor ${\Psi\SK \to \Psi_F\SK}$, where $\Psi_F \defeq \{ F^\W \c F\h\CAT(W) \mid W \in \Psi \}$.
\end{corollary}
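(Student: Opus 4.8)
The plan is to obtain the claimed 2-functor by restricting the change-of-base 2-functor $F\SK \colon \V\SK \to \W\SK$ produced by \cref{change-of-base} applied to $F$ (which is legitimate, since $F$ is a lax monoidal functor between closed monoidal categories, hence a 1-cell of $\b{CLMONCAT}_l$). The essential point is already visible in the construction given in the proof of \cref{change-of-base}: $F\SK$ sends a $\V$-sketch $\sk S$ with class of cones $\{(W_i, D_i, X_i, \gamma_i)\}_{i \in \Gamma}$ to the $\W$-sketch whose underlying $\W$-category is $F\h\CAT(\ct S)$ and whose class of cones is $\big\{\big(F^\W \c F\h\CAT(W_i),\, F\h\CAT(D_i),\, X_i,\, F^\W \c F\h\CAT(\gamma_i)\big)\big\}_{i \in \Gamma}$; in particular, the weight of the $i$-th cone of $F\SK(\sk S)$ is exactly $F^\W \c F\h\CAT(W_i)$.

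First I would recall that, by \cref{sketch}, $\sk S$ being a $\Psi$-limit $\V$-sketch means precisely that $W_i \in \Psi$ for every $i \in \Gamma$. Combining this with the formula above, every weight occurring in $F\SK(\sk S)$ is of the form $F^\W \c F\h\CAT(W_i)$ with $W_i \in \Psi$, hence lies in $\Psi_F$ by the definition $\Psi_F \defeq \{ F^\W \c F\h\CAT(W) \mid W \in \Psi \}$. Thus $F\SK$ sends each object of $\Psi\SK$ to an object of $\Psi_F\SK$. Since $\Psi\SK$ and $\Psi_F\SK$ are by definition the \emph{full} sub-2-categories of $\V\SK$ and $\W\SK$ spanned by the respective $\Psi$-limit and $\Psi_F$-limit sketches, $F\SK$ automatically restricts on 1-cells and 2-cells, and the restriction is a 2-functor because $F\SK$ is.

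There is no substantive obstacle here: the corollary is a direct consequence of \cref{change-of-base} together with unwinding the definitions of $\Psi_F$ and of $\Psi$-limit $\V$-sketch. The only point requiring a modicum of care is to read the definition of ``$\Psi_F$-limit $\W$-sketch'' strictly as in \cref{sketch}, namely that \emph{every} weight of the cone-class belongs to $\Psi_F$; once one does so, the verification above is exactly what is needed.
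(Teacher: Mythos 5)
Your proposal is correct and follows exactly the route the paper takes: the paper's proof is simply ``Immediate from the construction of \cref{change-of-base},'' and your argument unwinds precisely that construction, observing that the weights of the cones of $F\SK(\sk S)$ are $F^\W \c F\h\CAT(W_i)$ with $W_i \in \Psi$, hence lie in $\Psi_F$, and that fullness of the sub-2-categories makes the restriction automatic on 1-cells and 2-cells.
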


\begin{proof}
	Immediate from the construction of \cref{change-of-base}.
\end{proof}

\subsection{\texorpdfstring{$\F$}{F}-sketches and their \texorpdfstring{$\F$}{F}-categories of models}
\label{F-sketches}

\begin{definition}
	An \emph{enhanced limit 2-sketch} (or simply an \emph{enhanced 2-sketch}) is a limit $\F$-sketch in the sense of \cref{sketch}. We shall denote the underlying $\F$-category of an $\F$-sketch $\sk S$ by $\fc S$.
\end{definition}

We now turn to the $\F$-categories of models of $\F$-sketches. These will be defined as full sub-$\F$-categories of certain functor $\F$-categories, which we therefore introduce first.

\begin{definition}[{\cf{}~\cite[\S4.1]{lack2012enhanced}}]
	\label{functor-F-category}
	Let $w' \leq w \in \W$ be a pair of weaknesses, and let $\fc S$ and $\fc C$ be small $\F$-categories. Denote by $\FFun_{w', w}(\fc S, \fc C)$ the small $\F$-category of $\F$-functors and \emph{loose $(w', w)$-natural transformations}, defined as follows.
	\begin{enumerate}
		\item Objects are $\F$-functors $\fc S \to \fc C$.
		\item Loose morphisms $\phi \colon M \lto N$ are $w$-natural
		transformations $\phi \colon M \tto N$, which comprise $w$-natural families
		$\{ \phi_S \colon M(S) \lto N(S) \}_{S \in \fc S}$, such that the
		restricted family
		\[\begin{tikzcd}
			{\fc S_\tau} & {\fc S_\lambda} & {\fc C_\lambda}
			\arrow[""{name=0, anchor=center, inner sep=0}, "{M_\lambda}",
			shift left, curve={height=-6pt}, from=1-2, to=1-3]
			\arrow[""{name=1, anchor=center, inner sep=0}, "{N_\lambda}"',
			shift right, curve={height=6pt}, from=1-2, to=1-3]
			\arrow["\fc S", from=1-1, to=1-2]
			\arrow["\phi", shorten <=2pt, shorten >=2pt, Rightarrow,
			from=0, to=1]
		\end{tikzcd}\]
		is $w'$-natural. Explicitly, these comprise families of $w$-cells,
		\[\begin{tikzcd}
			{M(S)} & {N(S)} \\
			{M(S')} & {N(S')}
			\arrow["{\phi_S}", squiggly, from=1-1, to=1-2]
			\arrow["{\phi_{S'}}"', squiggly, from=2-1, to=2-2]
			\arrow[""{name=0, anchor=center, inner sep=0}, "{M(s)}"',
			squiggly, from=1-1, to=2-1]
			\arrow[""{name=1, anchor=center, inner sep=0}, "{N(s)}",
			squiggly, from=1-2, to=2-2]
			\arrow["{\phi_s}"{description}, draw=none, from=0, to=1]
		\end{tikzcd}\]
		such that $\phi_s$ is a $w'$-cell if $s \colon S \lto S'$ is tight.
		\item A morphism $\phi \colon M \lto N$ is tight when each component $\phi_S
		\colon M(S) \lto N(S)$ is tight and each $\phi_s$ is a $w'$-cell.
		\item 2-cells are modifications.
	\end{enumerate}
	Denote by $\FFun_w(\fc S, \fc C) \defeq \FFun_{s, w}(\fc S, \fc C)$ the $\F$-category of $\F$-functors and \emph{loose \mbox{$w$-natural} transformations}.
\end{definition}

In particular, $\FFun_s(\fc S, \fc C)$ is the usual $\F$-category of $\F$-functors in the sense of enriched category theory~\cite[Chapter~2]{kelly1982basic}. However, it is the $\F$-categories $\FFun_p(\fc S, \fc C)$, $\FFun_l(\fc S, \fc C)$, and $\FFun_c(\fc S, \fc C)$ of loose pseudo, lax, and colax natural transformations that are of primary interest here.

\begin{definition}
	\label{F-category-of-models}
	Let $\sk S$ and $\sk C$ be $\F$-sketches and let $w' \leq w \in \W$ be a pair of weaknesses. Define $\FMod_{w', w}(\sk S, \sk C) \hookrightarrow \FFun_{w', w}(\fc S, \fc C)$ to be the full sub-$\F$-category spanned by the models of $\sk S$ in $\sk C$. Define $\FMod_w(\sk S, \sk C) \defeq \FMod_{s, w}(\sk S, \sk C)$.
\end{definition}

Note that, under this definition, $\FMod_s(\sk S, \sk C) = \FMod(\sk S, \sk C)$ as defined in \cref{model} for general $\V$. It is, however, the $\F$-categories $\FMod_p(\sk S, \sk C)$, $\FMod_l(\sk S, \sk C)$ and $\FMod_c(\sk S, \sk C)$ of pseudo, lax, and colax morphisms that are of primary interest here.

\subsection{Examples of $\F$-sketches}
\label{examples-of-F-sketches}

We shall exhibit each of the $\F$-categorical structures in \cref{F-categorical-structures} as models of $\F$-sketches. This automatically provides us with appropriate notions of morphism for such structures, as well as facilitating the application of the general theory of $\F$-sketches that we will develop in the subsequent sections. We recall that, since $\F$ is cocomplete, the category of $\F$-categories is monadic over the category of $\F$-graphs~\cite[Theorem~2.13]{wolff1974v}: this permits us to speak of $\F$-categories freely generated by $\F$-graphs.

\begin{remark}
	In many examples of $\F$-sketches, the choice of tight morphisms is, in a certain sense, canonical. A categorical justification for this will be given in \cref{(co)free-F-sketches}, where we will show that each of following examples may be constructed freely from a limit 2-sketch.

	In the meantime, it may prove helpful to give a syntactic intuition for the choice of tight morphisms. Many sketches admit natural presentations as dependently typed algebraic theories~\cite{cartmell1986generalised}. From this perspective, the tight morphisms are precisely those encoding the type dependencies (\ie the \emph{display maps}~\cite{taylor1987recursive}). For instance, in the definition of a (pseudo)category in an $\F$-category (\cref{pseudocategory}), it is the source and target morphisms that are tight. This corresponds to the fact that, in the usual presentation of a category in terms of \eqref{Ob} a collection $\s{Ob}$ of objects and, \eqref{Hom} for each pair of objects $A$ and $B$, a collection $\s{Hom}(A, B)$ of morphisms, the source and target morphisms describe the type dependency of $\s{Hom}$.
	\begin{align}
		\label{Ob} \vdash \s{Ob}\text{ type} \\
		\label{Hom} A : \s{Ob}, B : \s{Ob} & \vdash \s{Hom}(A, B)\text{ type}
	\end{align}

	This intuition matches the choice of tight morphisms in \citeauthor{bastiani1974multiple}'s~\cite{bastiani1974multiple} approach to lax morphisms of sketches mentioned in \cref{lax-morphisms-via-2-categories}, since, in a model of dependent type theory, it is precisely the display maps over which one may take limits.
\end{remark}

\begin{example}[Pseudomonoid sketch]
	\label{sketch-pseudomonoid}
	The $\F$-sketch $\sk M$ for pseudomonoids (\cref{skew-monoid}) is a tight product sketch.  It contains objects $1$, $M$, $M^2$, $M^3$, $M^4$ and, as an $\F$-category, is freely generated by these objects together with
	\begin{itemize}
		\item tight morphisms $\pi^i_j \colon M^i \to M$ for $1 < i \leq 4$ and $1 \leq j \leq i$;
		\item a tight morphism $\unit \colon M \to 1$;
		\item loose morphisms ${\otimes} \colon M^2 \lto M$, $I \colon 1 \lto M$;
		\item further loose morphisms and a pair of invertible 2-cells as in the following diagrams
		\[
		\begin{tikzcd}[sep=large]
			{M^3} & {M^2} \\
			{M^2} & M
			\arrow["{M {\otimes}}"', squiggly, from=1-1, to=2-1]
			\arrow["\otimes"', squiggly, from=2-1, to=2-2]
			\arrow["\otimes", squiggly, from=1-2, to=2-2]
			\arrow["{{\otimes} M}", squiggly, from=1-1, to=1-2]
			\arrow["\alpha"{description}, shorten <=6pt, shorten >=6pt, Rightarrow, from=1-2, to=2-1]
		\end{tikzcd}
		\hspace{4em}
		\begin{tikzcd}[sep=large]
			M & {M^2} & M \\
			& M
			\arrow[""{name=0, anchor=center, inner sep=0}, "\otimes"{description}, squiggly, from=1-2, to=2-2]
			\arrow[""{name=0p, anchor=center, inner sep=0}, phantom, from=1-2, to=2-2, start anchor=center, end anchor=center]
			\arrow[""{name=0p, anchor=center, inner sep=0}, phantom, from=1-2, to=2-2, start anchor=center, end anchor=center]
			\arrow[""{name=1, anchor=center, inner sep=0}, "{1_M}", from=1-3, to=2-2]
			\arrow[""{name=1p, anchor=center, inner sep=0}, phantom, from=1-3, to=2-2, start anchor=center, end anchor=center]
			\arrow[""{name=2, anchor=center, inner sep=0}, "{1_M}"', from=1-1, to=2-2]
			\arrow[""{name=2p, anchor=center, inner sep=0}, phantom, from=1-1, to=2-2, start anchor=center, end anchor=center]
			\arrow["{M I}"', squiggly, from=1-3, to=1-2]
			\arrow["{I M}", squiggly, from=1-1, to=1-2]
			\arrow["\rho"', shift right=2, shorten <=5pt, shorten >=5pt, Rightarrow, from=1p, to=0p]
			\arrow["\lambda"', shift right=2, shorten <=5pt, shorten >=5pt, Rightarrow, from=0p, to=2p]
		\end{tikzcd}
		\]
		together with all of the further morphisms and 2-cells in \eqref{eq:pseudomon1} and \eqref{eq:pseudomon2} subject to those equations holding;
		\item the equations asserting that the morphisms and 2-cells with codomain $M^i$ for $1 < i \leq 4$ behave as expected with respect to the product projections $\pi^i_j$ for $1 \leq j \leq i$ (these are left to the reader).
	\end{itemize}
	The four specified cones are the tight product projections $\pi^i_j \colon M^i \to M$ for $1 < i \leq 4$ and $1 \leq j \in \leq i$ and an empty tight terminal object cone for $1$.

	For each weakness $w \in \W$, we have $\FMod_w(\sk M, \chord{\Cat}) \iso \FMonCat s w$ (\cref{ex:FMonCat}).
\end{example}

\begin{example}[Pseudocategory sketch]
	\label{sketch-pseudocategory}
	The $\F$-sketch $\sk C$ for pseudocategories (\cref{pseudocategory}) is a tight pullback sketch.  It contains objects $C_0$, $C_1$, $C_2$, $C_3$, $C_4$ and, as an $\F$-category, is freely generated by these objects together with
	\begin{itemize}
		\item tight morphisms $s,t \colon C_1 \to C_0$;
		\item tight morphisms $\pi^i_1, \pi^i_2 \colon C_{i+1} \to C_i$ for $1 \leq i \leq 3$ rendering commutative the following diagrams;
		\begin{equation}\label{eq:pullback}
		\begin{tikzcd}
			{C_{i+2}} & {C_{i+1}} \\
			{C_{i+1}} & {C_{i}}
			\arrow["{\pi^{i+1}_2}", from=1-1, to=1-2]
			\arrow["{\pi^{i+1}_1}"', from=1-1, to=2-1]
			\arrow["{\pi^i_1}", from=1-2, to=2-2]
			\arrow["{\pi^i_2}"', from=2-1, to=2-2]
		\end{tikzcd}
		\end{equation}
		\item loose morphisms $c \colon C_2 \lto C_1$ and $i \colon C_0 \lto C_1$ satisfying compatibility conditions with composition $s \circ c = s \circ \pi^1_1$ and $t \circ c = t \circ \pi^1_2$, and with identities $s \circ i = 1_{C_{0}} = t \circ i$;
		\item further morphisms and a pair of invertible 2-cells as in the following diagrams
		\[
		\begin{tikzcd}
			{C_3} & {C_2} \\
			{C_2} & {C_1}
			\arrow["{\langle c,1_{C_1}\rangle}", squiggly, from=1-1, to=1-2]
			\arrow["{\langle 1_{C_1},c\rangle}"', squiggly, from=1-1, to=2-1]
			\arrow["\alpha"', shorten <=6pt, shorten >=6pt, Rightarrow, from=1-2, to=2-1]
			\arrow["c", squiggly, from=1-2, to=2-2]
			\arrow["c"', squiggly, from=2-1, to=2-2]
		\end{tikzcd}
		\hspace{4em}
		\begin{tikzcd}
			{C_1} & {C_2} & {C_1} \\
			& {C_1}
			\arrow["{\langle i,1_{C_1}\rangle}", squiggly, from=1-1, to=1-2]
			\arrow[""{name=0, anchor=center, inner sep=0}, "{1_{C_1}}"', from=1-1, to=2-2]
			\arrow[""{name=0p, anchor=center, inner sep=0}, phantom, from=1-1, to=2-2, start anchor=center, end anchor=center]
			\arrow[""{name=1, anchor=center, inner sep=0}, "c"{description}, squiggly, from=1-2, to=2-2]
			\arrow[""{name=1p, anchor=center, inner sep=0}, phantom, from=1-2, to=2-2, start anchor=center, end anchor=center]
			\arrow[""{name=1p, anchor=center, inner sep=0}, phantom, from=1-2, to=2-2, start anchor=center, end anchor=center]
			\arrow["{\langle 1_{C_1},i\rangle}"', squiggly, from=1-3, to=1-2]
			\arrow[""{name=2, anchor=center, inner sep=0}, "{1_{C_1}}", from=1-3, to=2-2]
			\arrow[""{name=2p, anchor=center, inner sep=0}, phantom, from=1-3, to=2-2, start anchor=center, end anchor=center]
			\arrow["\lambda"', shift right=2, shorten <=5pt, shorten >=5pt, Rightarrow, from=1p, to=0p]
			\arrow["\rho"', shift right=2, shorten <=5pt, shorten >=5pt, Rightarrow, from=2p, to=1p]
		\end{tikzcd}\]
		together with all of the further morphisms and 2-cells in \eqref{eq:pseudocat1} and \eqref{eq:pseudocat2} subject to those equations holding;
		\item the equations asserting that the morphisms and $2$-cells with codomain $C_i$ for $2 \leq i \leq 3$ behave as expected under postcomposition by the pullback projections $\pi^i_1$ and $\pi^i_2$ (these are left to the reader).
	\end{itemize}
	There are three tight pullback cones, depicted in the commutative squares \eqref{eq:pullback}.

	For each weakness $w \in \W$, we have $\FMod_w(\sk C, \chord{\Cat}) \iso \FDblCat s w$ (\cref{ex:FDblCat}). Furthermore, for $w \ge p$, we have that $\FMod_{p, w}(\sk C, \chord{\Cat})$ is the $\F$-category of double categories, \emph{$w$-weak wobbly double functors} in the sense of \cite[Definition~2.1]{pare2015wobbly}, and natural transformations.
\end{example}

\begin{example}[Models in sketches]
	Every $\F$-category $\fc D$ admitting tight pullbacks may be viewed as a tight pullback $\F$-sketch $\sk D$. Models of $\sk C$ (\cref{sketch-pseudocategory}) in the $\F$-sketch $\sk D$ are then precisely pseudocategories in $\fc D$ in the sense of \cref{pseudocategory}. However, more generally, we can consider models of $\sk C$ in an $\F$-sketch $\sk D$ in which the cones are not limiting. We give two examples to illustrate the value in considering this more general notion. In the following, an \emph{enhanced category} is an enhanced 2-category whose underlying 2-category of loose morphisms is locally discrete; and the $\F$-sketch for \emph{categories} is obtained from the sketch for pseudocategories (\cref{pseudocategory}) by requiring $\alpha$, $\lambda$, $\rho$ to be identities.
	\begin{itemize}
		\item Denote by $\fcat{Set_{\tx{par}}}$ the enhanced category whose objects are sets, whose loose morphisms are partial functions, and whose tight morphisms are total functions. $\fcat{Set_{\tx{par}}}$ does not admit tight pullbacks. However, we may equip it with the structure of a tight pullback sketch $\sk P$ in which the cones are the (typically non-limiting) cones for pullbacks of functions.  A \emph{precategory} in the sense of \cite[Definition~3.1]{mateus1999precategories} is then precisely a category in $\sk P$ for which $i \colon C_0 \lto C_1$ is tight. Similarly, an unbiased precategory (\ie with $n$-ary, rather than binary, composition operations) in $\sk P$ is a \emph{paracategory} in the sense of \cite[\S2]{hermida2003paracategories}.
		\item Denote by $\fcat{Man}$ the enhanced category whose objects are smooth manifolds, whose loose morphisms are smooth functions, and whose tight morphisms are surjective submersions. We may equip $\fcat{Man}$ with the structure of a tight pullback sketch $\sk S$ in which the cones are the cones for pullbacks of surjective submersions. A category in $\sk S$ is then a \emph{differentiable category} in the sense of \cite[Definition~III.1.1]{mackenzie1987lie}; a model for the evident $\F$-sketch for groupoids is a \emph{differentiable groupoid}, \aka{} a \emph{Lie groupoid}.
		\qedhere
	\end{itemize}
\end{example}

\begin{example}[Fibration sketch]
	\label{sketch-fibration}
	Before describing the $\F$-sketch for fibrations,  let us first recall an algebraic description of fibrations in a sufficently complete 2-category $\tc C$. Consider a 1-cell $p \colon E \to B$ in $\tc C$ and suppose that both the power $\b 2 \pow E$ and colax limit $B \comma p$ exist. Then the composite 2-cell depicted below left induces a unique 1-cell $t \colon \b 2 \pow E \to B \comma p$ for which the equality below holds.
	\begin{equation}
	\label{fibration-equation}
	\begin{tikzcd}[row sep=2.6em]
		& {\b 2 \pow E} \\
		\\
		E && B
		\arrow[""{name=0, anchor=center, inner sep=0}, "{\pi_1}"', curve={height=18pt}, from=1-2, to=3-1]
		\arrow[""{name=1, anchor=center, inner sep=0}, "{\pi_2}"{description}, curve={height=-18pt}, from=1-2, to=3-1]
		\arrow[""{name=2, anchor=center, inner sep=0}, "{\pi_2p}", from=1-2, to=3-3]
		\arrow[""{name=3, anchor=center, inner sep=0}, "p"', from=3-1, to=3-3]
		\arrow["\varpi", shorten <=7pt, shorten >=7pt, Rightarrow, from=0, to=1]
		\arrow["{=}"{description}, draw=none, from=3, to=2]
	\end{tikzcd}
	\hspace{.6cm}
	=
	\hspace{.6cm}
	\begin{tikzcd}
		& {\b 2 \pow E} \\
		& {B \comma p} \\
		E && B
		\arrow["t"{description}, from=1-2, to=2-2]
		\arrow["{\pi^p_1}"', from=2-2, to=3-1]
		\arrow["{\pi^p_2}", from=2-2, to=3-3]
		\arrow[""{name=0, anchor=center, inner sep=0}, "p"', from=3-1, to=3-3]
		\arrow["{\varpi_p}"', shorten <=3pt, shorten >=5pt, Rightarrow, from=2-2, to=0]
	\end{tikzcd}
	\end{equation}
	By \cite[Theorem~3.1.3]{loregian2020categorical}, $p$ is a fibration precisely when $t$ admits a right adjoint ${r \colon B \comma p \to \b 2 \pow E}$ with identity counit $t r = 1$.%
	\footnote{This is a minor variation of \cite[Proposition~9]{street1974fibrations}.}
	The intuition is that, when $\tc C = \Cat$, the morphism $r(f \colon b \to pa) \colon b' \to a$ is the cartesian lifting of $f$: that it is a lifting of $f$ is captured by the fact that the counit is the identity, whilst the universality property of the cartesian lifting is captured by adjointness. Discrete fibrations in $\tc C$ can be characterised as those morphisms for which $t$ is invertible.

	Now suppose we are given a tight morphism $p \colon E \to B$ in an $\F$-category $\fc C$ admitting colax limits of tight arrows. We can form $\b 2 \pow E$ and $B \comma p$ and, by the characterisation of fibrations above in the 2-category $\fc C_\lambda$, the morphism $p$ is a fibration precisely when $t$ admits a right adjoint $r \colon B \comma p \lto \b 2 \pow E$ in $\fc C_\lambda$ with identity counit. Accordingly, the $\F$-sketch $\sk F$ for fibrations is a tight comma object sketch. As an $\F$-category, it is freely generated by objects, morphisms and 2-cells as below,
	\[
	\begin{tikzcd}[column sep=large,row sep=3.7em]
		{\b 2 \pow E} \\
		{B\comma p} & {\b 2 \pow E}
		\arrow["t"', squiggly, from=1-1, to=2-1]
		\arrow[""{name=0, anchor=center, inner sep=0}, "{1_{\b 2 \pow E}}", from=1-1, to=2-2]
		\arrow["r"', squiggly, from=2-1, to=2-2]
		\arrow["\eta"{description}, shorten <=4pt, Rightarrow, from=0, to=2-1]
	\end{tikzcd}
	\hspace{.8cm}
	\begin{tikzcd}
		& {\b 2 \pow E} \\
		\\
		E && B
		\arrow[""{name=0, anchor=center, inner sep=0}, "{\pi_1}"', curve={height=18pt}, from=1-2, to=3-1]
		\arrow[""{name=1, anchor=center, inner sep=0}, "{\pi_2}"{description}, curve={height=-18pt}, from=1-2, to=3-1]
		\arrow[""{name=2, anchor=center, inner sep=0}, "{\pi_2p}", from=1-2, to=3-3]
		\arrow[""{name=3, anchor=center, inner sep=0}, "p"', from=3-1, to=3-3]
		\arrow["\varpi", shorten <=7pt, shorten >=7pt, Rightarrow, from=0, to=1]
		\arrow["{=}"{description}, draw=none, from=3, to=2]
	\end{tikzcd}
	\hspace{.8cm}
	\begin{tikzcd}[row sep=3.4em]
		& {B \comma p} \\
		E && B
		\arrow["{\pi^p_1}"', from=1-2, to=2-1]
		\arrow["{\pi^p_2}", from=1-2, to=2-3]
		\arrow[""{name=0, anchor=center, inner sep=0}, "p"', from=2-1, to=2-3]
		\arrow["{\varpi_p}"{description}, shorten <=6pt, shorten >=6pt, Rightarrow, from=1-2, to=0]
	\end{tikzcd}
	\]
	satisfying the equation \eqref{fibration-equation}, $t r = 1$, and the triangle identities $t \eta = 1$ and $\eta t = 1$. The two specified weighted cones are $(\pi_1,\varpi,\pi_2)$ for the tight power by $\b 2$, and $(\pi^p_1,\varpi_p,\pi^p_2)$ for the colax limit of $p$.%
	\footnote{Note that in an $\F$-category with colax limits of tight morphisms, the induced morphism $t$ is necessarily tight (since $p$ is), but in the $\F$-sketch $\sk F$ we do not require $t$ to be tight. The reason for this will be justified in \cref{(co)free-F-sketches}, where we will show that this choice is determined by a universal construction. See \cref{regular-category-sketch} for a detailed discussion of a related situation.}

	For each $w \in \W$, we have $\FMod_w(\sk F, \chord{\Cat}) \iso \FFib s w$ (\cref{ex:FFib}). There is an analogous $\F$-sketch for discrete fibrations, in which we take $\eta$ to be an identity 2-cell. $\F$-sketches for opfibrations and discrete opfibrations are defined in the same way, but replacing the colax limit $B \comma p$ by the lax limit $p \comma B$.
\end{example}

\section{Multicategories of enhanced 2-sketches}
\label{multicategory-of-sketches}

In this section, we define multicategories $\FSK_{w', w}$ of $\F$-sketches and investigate their properties. In particular, we show that, for each pair $w' \leq w \in \W$ of weaknesses, the multicategory $\FSK_{w', w}$ is both left- and right-closed and representable, and so induces a left- and right-closed monoidal structure $\otimes_{w', w}$ on the category $\FSK$ of $\F$-sketches. Closure of $\FSK_{w', w}$ leads to our main applications of the theory of $\F$-sketches in \cref{multiple-perspectives}, namely in establishing the symmetry of internalisation.

Before proceeding with the definitions and their study, let us provide some context for our motivation in studying multicategories of $\F$-sketches. Recall that a morphism of $\F$-sketches $\sk S \to \sk C$ provides an interpretation of the structure of the $\F$-sketch $\sk S$ in the $\F$-sketch $\sk C$. More generally, a \emph{multi}morphism of $\F$-sketches $(\sk S_1, \ldots, \sk S_n) \to \sk C$ provides a interpretation in $\sk C$ of each of the structures of the sketches $\sk S_1$ through $\sk S_n$ simultaneously, which is coherent in the sense that the operations of each sketch are homomorphisms for each of the others. Closure means that such multimorphisms are in natural bijection with multimorphisms
$(\sk S_2, \ldots, \sk S_n) \to [\sk S_1, \sk C]_l$ and with multimorphisms
$(\sk S_1, \ldots, \sk S_{n - 1}) \to [\sk S_n, \sk C]_r$,
where $[{-}, {-}]_l$ and $[{-}, {-}]_r$ denote internal hom objects. The existence of such closed structure therefore allows us to view models of an $\F$-sketch $\sk S$ in the $\F$-category of models of another $\F$-sketch $\sk T$ as coherent models of both $\sk S$ and $\sk T$. We shall elaborate on this perspective in \cref{multiple-perspectives}.

\subsection{Background on multicategories}

We assume familiarity with the definition of multicategory~\cite[103]{lambek1969deductive}, but recall some important properties. First, multicategories may possess internal homs: in the absence of symmetry, there are two variants.

\begin{definition}[{\cite[\S4]{lambek1989multicategories}}]
	A multicategory $\ct M$ is \emph{left-closed} if, for each pair of objects $X, Z \in \ct M$, there is an object $[X, Z]_l \in \ct M$ and, for each $Y_1, \ldots, Y_n \in \ct M$, an isomorphism
	\[\ct M(X, Y_1, \ldots, Y_n; Z) \iso \ct M(Y_1, \ldots, Y_n; [X, Z]_l)\]
	natural in $X, Y_1, \ldots, Y_n, Z$. $\ct M$ is \emph{right-closed} if, for each pair of objects $Y, Z \in \ct M$, there is an object $[Y, Z]_r \in \ct M$ and, for each $X_1, \ldots, X_n \in \ct M$, an isomorphism
	\[\ct M(X_1, \ldots X_n, Y; Z) \iso \ct M(X_1, \ldots, X_n; [Y, Z]_r)\]
	natural in $X_1, \ldots, X_n, Y, Z$. $\ct M$ is \emph{closed}\footnotemark{} if it is left- and right-closed.
	\footnotetext{\citeauthor{lambek1989multicategories} uses \emph{biclosed} where we used \emph{closed}. However, the prefix \emph{bi-} is redundant; in the absence of symmetry, our usage of \emph{closed} is unambiguous. In the presence of symmetry, the left and right internal homs are canonically isomorphic (see below).}
\end{definition}

Every monoidal category induces a multicategory whose multimorphisms ${X_1, \ldots, X_n \to Y}$ are given by morphisms $X_1 \otimes \cdots \otimes X_n \to Y$. Conversely, it is possible to identify those multicategories that arise from monoidal categories in this way, via a universal property.

\begin{definition}[{\cites[\S4]{lambek1989multicategories}{hermida2000representable}}]
	\label{representable}
	A multicategory $\ct M$ is \emph{representable} if it admits
	\begin{itemize}
		\item a \emph{unit} object $I$ and nullary multimorphism ${} \to I$ for which the following function induced by precomposition is invertible for each $X_1, \ldots, X_n, Y \in \ct M$;
		\[\ct M(X_1, \ldots, X_i, I, X_{i + 1}, \ldots, X_n; Y) \to \ct M(X_1, \ldots, X_i, X_{i + 1}, \ldots, X_n; Y)\]
		\item for each pair of objects $A, B \in \ct M$, a \emph{tensor product} object $A \otimes B$ and a binary multimorphism $A, B \to A \otimes B$ for which the following function induced by precomposition is invertible for each $X_1, \ldots, X_n, Y \in \ct M$.
		\[\ct M(X_1, \ldots, X_i, A \otimes B, X_{i + 1}, \ldots, X_n; Y) \to \ct M(X_1, \ldots, X_i, A, B, X_{i + 1}, \ldots, X_n; Y) \qedhere\]
	\end{itemize}
\end{definition}

If a (closed) multicategory is representable, its underlying category of unary multimorphisms obtains a (closed) monoidal structure; conversely every (closed) monoidal category is canonically equipped with the structure of a (closed) representable multicategory. In fact, the concepts of (closed) monoidal category and of representable (closed) multicategory are essentially equivalent~\cite[\S9]{hermida2000representable}.

\begin{remark}
	\label{representability-simplified}
	Note that our chosen presentation of representability in \cref{representable} is a \emph{biased} form of representability (following \citeauthor{lambek1989multicategories}'s formulation), rather than \citeauthor{hermida2000representable}'s unbiased formulation, in which one asks for $n$-ary tensor products for each $n \in \N$. That these formulations are equivalent follows essentially as in \cite[Proposition~4.5]{bourke2018skew}. Furthermore, for \emph{closed} multicategories, it suffices in \cref{representable} to take $n = 0$~\cite[Lemma~3.7]{weber2013free}.
\end{remark}

Finally, there is a notion of symmetry for multicategories, reflecting that for monoidal categories.

\begin{definition}[{\cite[\S2.3]{baez1998higher}}]
	A \emph{symmetric multicategory} is a multicategory equipped with, for each list of objects $X_1, \ldots, X_n, Y \in \ct M$ and each permutation $\varrho$ of $\{ 1, \ldots, n \}$, a (necessarily invertible) function
	\[\varrho\ph \colon \ct M(X_1, \ldots, X_n; Y) \to \ct M(X_{\varrho 1}, \ldots, X_{\varrho n}; Y)\]
	such that $\varrho\ph$ is natural and compatible with pre- and postcomposition.
\end{definition}

As for the nonsymmetric case, the notions of symmetric (closed) monoidal categories and of representable symmetric (closed) multicategories are essentially equivalent~\cite[\S3.4]{weber2013free}. If $X$ and $Y$ are objects in a symmetric multicategory $\ct M$, then a left-hom $[X, Y]_l$ is automatically also a right-hom $[X, Y]_r$, and vice versa. Accordingly, in a closed symmetric multicategory, we denote the inner homs simply by $[X, Y]$.

In a closed \emph{symmetric} multicategory, we have a natural isomorphism $[X, [Y, Z]] \iso [Y, [X, Z]]$, corresponding to the process of uncurrying, applying a symmetry, then currying. Without symmetry, we cannot permute variables exactly in this manner, because we have two notions of inner hom. However, we are able to permute \emph{up to a twist}.

\begin{proposition}
	\label{closed-multicategory-skew-symmetry}
	Let $\ct M$ be a closed multicategory. For each $X, Y, Z \in \ct M$, there is an isomorphism $[X, [Y, Z]_\ell]_r \iso [Y, [X, Z]_r]_\ell$, natural in $X, Y, Z$.
\end{proposition}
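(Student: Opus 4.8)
The plan is to deduce the isomorphism from the Yoneda lemma, by exhibiting both $[X,[Y,Z]_\ell]_r$ and $[Y,[X,Z]_r]_\ell$ as representing objects for one and the same presheaf on the underlying ordinary category $\ct M_1$ of objects and unary multimorphisms of $\ct M$ --- namely for $W \mapsto \ct M(Y, W, X; Z)$ --- with the representing bijections moreover natural in $X$, $Y$ and $Z$. As a preliminary, I would record that in any closed multicategory the internal homs are functorial, $[-,-]_\ell, [-,-]_r \colon \ct M_1^{\mathrm{op}} \times \ct M_1 \to \ct M_1$: for fixed $Y, Z$ the object $[Y,Z]_r$ represents the presheaf $W \mapsto \ct M(W, Y; Z)$ on $\ct M_1$, this presheaf varies functorially (contravariantly in $Y$, covariantly in $Z$), and a pointwise choice of representing object for a functorially varying presheaf assembles canonically into a functor; the case of $[-,-]_\ell$ is symmetric. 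In particular both composites $[X,[Y,Z]_\ell]_r$ and $[Y,[X,Z]_r]_\ell$ are functors $\ct M_1^{\mathrm{op}} \times \ct M_1^{\mathrm{op}} \times \ct M_1 \to \ct M_1$ of $(X,Y,Z)$.

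The core of the argument is then to assemble, for each object $W$ of $\ct M_1$, two chains of bijections built solely from the defining isomorphisms of left- and right-closedness, with every insertion occurring at an \emph{end} of the multimorphism list (so that no symmetry is invoked). Right-closedness applied with the singleton list $W$, followed by left-closedness applied with the list $W, X$, gives
\[\ct M(W;\, [X,[Y,Z]_\ell]_r) \;\iso\; \ct M(W, X;\, [Y,Z]_\ell) \;\iso\; \ct M(Y, W, X;\, Z),\]
while left-closedness applied with the singleton list $W$, followed by right-closedness applied with the list $Y, W$, gives
\[\ct M(W;\, [Y,[X,Z]_r]_\ell) \;\iso\; \ct M(Y, W;\, [X,Z]_r) \;\iso\; \ct M(Y, W, X;\, Z).\]
Since, by the definition of closedness, each of these isomorphisms is natural in \emph{all} of its arguments, composing the two chains produces a bijection $\ct M_1(W, [X,[Y,Z]_\ell]_r) \iso \ct M_1(W, [Y,[X,Z]_r]_\ell)$ natural in $W$, $X$, $Y$ and $Z$. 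Applying the Yoneda lemma for $\ct M_1$ (in its parametrised form, to the family of representable presheaves indexed by $(X,Y,Z)$) then yields the required isomorphism $[X,[Y,Z]_\ell]_r \iso [Y,[X,Z]_r]_\ell$, natural in $X$, $Y$ and $Z$.

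I expect the only genuine work to lie in the bookkeeping of the previous step: one must check that every use of left-closedness binds its variable at the extreme left of the list and every use of right-closedness at the extreme right --- which is exactly the feature that forces the ``twist'' exchanging $[-,-]_\ell$ and $[-,-]_r$, since $Y$ necessarily ends up on the far left and $X$ on the far right of $\ct M(Y, W, X; Z)$ --- and that the naturality clauses in the definition of a closed multicategory make the composite bijection natural in $W$, $X$, $Y$, $Z$ simultaneously, rather than merely in the variable list of objects. No symmetry of $\ct M$ is used anywhere; this is the point, the statement being the best available in the absence of a symmetry, and degenerating to the familiar $[X,[Y,Z]] \iso [Y,[X,Z]]$ when $\ct M$ is symmetric.
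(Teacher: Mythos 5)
Your proof is correct and takes essentially the same route as the paper: both exhibit $[X,[Y,Z]_\ell]_r$ and $[Y,[X,Z]_r]_\ell$ as representing the presheaf $W \mapsto \ct M(Y, W, X; Z)$ via the identical two chains of closedness isomorphisms, and conclude by Yoneda. The paper leaves the functoriality of the internal homs and the Yoneda step implicit, but your explicit bookkeeping is exactly what is happening under the hood.
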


\begin{proof}
	We have the following chain of isomorphisms, natural in $W, X, Y, Z$.
	\begin{align*}
		\ct M(W; [X, [Y, Z]_\ell]_r) & \iso \ct M(W, X; [Y, Z]_\ell) \\
		& \iso \ct M(Y, W, X; Z) \\
		& \iso \ct M(Y, W; [X, Z]_r) \\
		& \iso \ct M(W; [Y, [X, Z]_r]_\ell)
		\qedhere
	\end{align*}
\end{proof}

Similarly, we may internalise the tensor--hom adjunction, which takes two forms: one for the left-hom, and one for the right-hom.

\begin{proposition}
	\label{closed-multicategory-tensor}
	Let $\ct M$ be a multicategory. If $\ct M$ is right-closed then, for each ${X, Y, Z \in \ct M}$ for which a tensor product $X \otimes Y$ exists, there is an isomorphism $[X \otimes Y, Z]_r \iso [X, [Y, Z]_r]_r$, natural in $X, Y, Z$. Dually, if $\ct M$ is left-closed, then, for each such $X, Y, Z \in \ct M$, there is an isomorphism $[X \otimes Y, Z]_l \iso [Y, [X, Z]_l]_l$, natural in $X, Y, Z$.
\end{proposition}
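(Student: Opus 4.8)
The plan is to prove both statements by the Yoneda lemma, exactly as in the proof of \cref{closed-multicategory-skew-symmetry} just above: since the objects being compared live in the same multicategory $\ct M$, it suffices to exhibit an isomorphism between the functors they represent, natural in the probing object $W$ as well as in $X, Y, Z$. So for the right-closed statement I would produce a natural isomorphism $\ct M(W; [X \otimes Y, Z]_r) \iso \ct M(W; [X, [Y, Z]_r]_r)$, and for the left-closed statement a natural isomorphism $\ct M(W; [X \otimes Y, Z]_l) \iso \ct M(W; [Y, [X, Z]_l]_l)$, then invoke Yoneda.

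For the right-closed case I would assemble the chain
\begin{align*}
	\ct M(W; [X \otimes Y, Z]_r) & \iso \ct M(W, X \otimes Y; Z) \\
	& \iso \ct M(W, X, Y; Z) \\
	& \iso \ct M(W, X; [Y, Z]_r) \\
	& \iso \ct M(W; [X, [Y, Z]_r]_r),
\end{align*}
in which the first, third and fourth isomorphisms are instances of the defining isomorphism of the right-hom (each time with the single ``closed off'' argument in final position), and the second is an instance of the tensor-product isomorphism of \cref{representable}. The latter is available precisely because we have assumed the particular tensor product $X \otimes Y$ to exist, even though $\ct M$ itself need not be representable. Each of the four isomorphisms is natural in $W$ and in $X, Y, Z$, hence so is the composite, and Yoneda then yields the isomorphism of objects, natural in $X, Y, Z$.

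For the left-closed case I would run the mirror-image argument, the only point of care being that the left-hom $[A, B]_l$ represents $\ct M(A, -; B)$, so the ``closed off'' argument now sits in \emph{first} position; correspondingly the tensor must be placed first when it is expanded:
\begin{align*}
	\ct M(W; [X \otimes Y, Z]_l) & \iso \ct M(X \otimes Y, W; Z) \\
	& \iso \ct M(X, Y, W; Z) \\
	& \iso \ct M(Y, W; [X, Z]_l) \\
	& \iso \ct M(W; [Y, [X, Z]_l]_l).
\end{align*}
Again every step is natural in all variables, and Yoneda concludes. There is no real obstacle here; the only thing requiring attention is the bookkeeping of argument positions — matching the side on which each inner hom expects its distinguished variable (left for $[-,-]_l$, right for $[-,-]_r$) against the position in which \cref{representable} permits expanding $X \otimes Y$ into $X, Y$ — together with the routine verification that all the isomorphisms in play are natural in every variable.
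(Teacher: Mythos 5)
Your proposal is correct and follows essentially the same route as the paper: the right-closed case is proved by exactly the same chain of four natural isomorphisms (tensor--hom adjunction, representability of $X \otimes Y$, then closing off twice), followed by Yoneda. The paper merely dispatches the left-closed case with ``follows dually'', whereas you spell out the mirror-image chain; your bookkeeping of argument positions there is accurate.
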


\begin{proof}
	If $\ct M$ is right-closed, then we have the following chain of isomorphisms, natural in $W, X, Y, Z$. The case in which $\ct M$ is left-closed follows dually.
	\begin{align*}
		\ct M(W; [X \otimes Y, Z]_r) & \iso \ct M(W, X \otimes Y; Z) \\
		& \iso \ct M(W, X, Y; Z) \\
		& \iso \ct M(W, X; [Y, Z]_r) \\
		& \iso \ct M(W; [X, [Y, Z]_r]_r)
		\qedhere
	\end{align*}
\end{proof}

\Cref{closed-multicategory-tensor} reveals a shortcoming of nonsymmetric tensor products: we are unable to relate the tensor product to the twisted symmetry of \cref{closed-multicategory-skew-symmetry}. However, in \cref{applications}, we will see an example of a situation in which this mismatch leads to interesting behaviour.

\subsection{The multicategory of \texorpdfstring{$\F$}{F}-categories}

We construct our multicategories of \mbox{$\F$-sketches} in two stages, starting by constructing multicategories $\FCAT_{w', w}$ of $\F$-categories. These enhance the classical multicategories of 2-categories corresponding to the different flavours of the tensor product of 2-categories, as first studied by \textcite[Theorem~1,4.14]{gray1974formal}. Whilst in 2-category theory, there is a $w$-tensor product for each weakness $w \in \W$, in $\F$-category theory we obtain a $(w', w)$-tensor product for each pair $w' \leq w \in \W$, corresponding to the choice of tight morphisms as a subclass of the loose morphisms. Rather than constructing these closed monoidal structures directly as \citeauthor{gray1974formal} does, we follow \textcite[\S1.3]{verity1992enriched} (\cf~\cite[Theorem~{I,4.7}]{gray1974formal}) in working multicategorically from the start, which avoids the complexity of defining and proving the coherence of tensor products of 2-categories. In fact, the closed multicategories we construct are refinements of the one constructed by \citeauthor{verity1992enriched}, which means properties like unitality and associativity of composition of multimorphisms follow automatically.

We begin by defining a notion of multimorphism of $\F$-categories, parametrised by $w' \leq w \in \W$.

\begin{definition}
	\label{F-multifunctor}
	Let $w' \leq w \in \W$ be weaknesses and let $\fc S_1, \ldots, \fc S_n, \fc C$ be small \mbox{$\F$-categories}. A \emph{$(w', w)$-natural $\F$-multifunctor} $M$ from $\fc S_1, \ldots, \fc S_n$ to $\fc C$, for $n > 0$, comprises the following data.
	\begin{enumerate}
		\item \label{separate-functoriality} For each $S_1 \in \fc S_1, \ldots, S_n \in \fc S_n$, an object $M(S_1, \ldots, S_n) \in \fc C$, such that, for each $1 \leq i \leq n$, the assignment $M(S_1, \ldots, {-}_i, \ldots, S_n)$ is equipped with the structure of an $\F$-functor $\fc S_i \to \fc C$.
		\item \label{w-cell-data} For each $S_1 \in \fc S_1, \ldots, S_n \in \fc S_n$ and pair of loose morphisms $s_i \colon S_i \lto S_i'$ in $\fc S_i$ and $s_j \colon S_j \lto S_j'$ in $\fc S_j$ for $1 \leq i < j \leq n$, a $w$-cell in $\fc C$
		\[\begin{tikzcd}[column sep=8em,row sep=large]
			{M(S_1, \ldots, S_i, \ldots, S_j, \ldots, S_n)} & {M(S_1, \ldots, S_i', \ldots, S_j, \ldots, S_n)} \\
			{M(S_1, \ldots, S_i, \ldots, S_j', \ldots, S_n)} & {M(S_1, \ldots, S_i', \ldots, S_j', \ldots, S_n)}
			\arrow["{M(S_1, \ldots, s_i, \ldots, S_j, \ldots, S_n)}", squiggly, from=1-1, to=1-2]
			\arrow[""{name=0, anchor=center, inner sep=0}, "{M(S_1, \ldots, S_i, \ldots, s_j, \ldots, S_n)}"', squiggly, from=1-1, to=2-1]
			\arrow["{M(S_1, \ldots, s_i, \ldots, S_j', \ldots, S_n)}"', squiggly, from=2-1, to=2-2]
			\arrow[""{name=1, anchor=center, inner sep=0}, "{M(S_1, \ldots, S_i', \ldots, s_j, \ldots, S_n)}", squiggly, from=1-2, to=2-2]
			\arrow["{M(S_1, \ldots, s_i, \ldots, s_j, \ldots, S_n)}"{description}, draw=none, from=1, to=0]
		\end{tikzcd}\]
		such that
		\begin{enumerate}
			\item $M(S_1, \ldots, s_i, \ldots, s_j, \ldots, S_n)$ is a $w'$-cell if either $s_i$ or $s_j$ is tight;
			\item the assignment $M(S_1, \ldots, s_i, \ldots, {-}_j, \ldots, S_n)$ is equipped with the structure of a loose $w$-natural transformation
			\[M(S_1, \ldots, S_i, \ldots, {-}_j, \ldots, S_n) \tto M(S_1, \ldots, S_i', \ldots, {-}_j, \ldots, S_n)\]
			\item the assignment $M(S_1, \ldots, {-}_i, \ldots, s_j, \ldots, S_n)$ is equipped with the structure of a loose $\wb$-natural transformation
			\[M(S_1, \ldots, {-}_i, \ldots, S_j, \ldots, S_n) \tto M(S_1, \ldots, {-}_i, \ldots, S_j', \ldots, S_n)\]
			\item for each loose morphism $s_i \colon S_i \lto S_i'$ in $\fc S_i$, $s_j \colon S_j \lto S_j'$ in $\fc S_j$, and $s_k \colon S_k \lto S_k'$ in $\fc S_k$ for $1 \leq i < j < k \leq n$, the evident cubical compatibility condition holds~\cites[Definition~1,4.6]{gray1974formal}[Definition~1.3.3]{verity1992enriched}.
		\end{enumerate}
	\end{enumerate}
	A nullary $(w', w)$-natural $\F$-multifunctor to $\fc C$ is simply an object of $\fc C$.
\end{definition}

When the $\F$-categories in question are chordate, $(s,l)$-natural $\F$-multifunctors coincide with the multimorphisms of \cite[Definition~1.3.3]{verity1992enriched} (when restricted to 2-categories).

\begin{proposition}
	$\F$-categories and $(w', w)$-natural $\F$-multifunctors form a multicategory structure $\FCAT_{w', w}$ on $\FCAT$, for each pair of weaknesses $w' \leq w \in \W$.
\end{proposition}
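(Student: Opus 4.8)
The plan is to exhibit the identity multimorphisms and the composition operation of $\FCAT_{w', w}$, and then to verify the multicategory axioms. The guiding principle is that a $(w', w)$-natural $\F$-multifunctor refines the multimorphism of \textcite[\S1.3]{verity1992enriched} (generalised from $2$-categories to $\F$-categories, with lax $2$-cells) by imposing direction, invertibility, and $w'$-tightness constraints on the comparison $2$-cells; consequently, associativity and unitality will be inherited, and the only genuinely new content is checking that composition respects these extra constraints.

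For identities, given a small $\F$-category $\fc S$, the identity $\F$-functor $1_{\fc S}$ is a unary $(w', w)$-natural $\F$-multifunctor $\fc S \to \fc S$: the functoriality condition of \cref{F-multifunctor} is exactly the statement that $1_{\fc S}$ is an $\F$-functor, and the conditions on $w$-cells are vacuous since there is a single slot. For composition, suppose we are given $(w', w)$-natural $\F$-multifunctors $M_k \colon \fc S_{k, 1}, \ldots, \fc S_{k, m_k} \to \fc T_k$ for $1 \le k \le n$, together with $N \colon \fc T_1, \ldots, \fc T_n \to \fc C$. I would define $N \c (M_1, \ldots, M_n)$ on objects by
\[
(S_{1, 1}, \ldots, S_{n, m_n}) \longmapsto N\big( M_1(S_{1, \bullet}), \ldots, M_n(S_{n, \bullet}) \big),
\]
with its restriction to a single slot $\fc S_{k, l}$ being the composite of the $\F$-functor $M_k(\ldots, -_l, \ldots) \colon \fc S_{k, l} \to \fc T_k$ with the slot-$k$ restriction of $N$, hence again an $\F$-functor. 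For a pair of slots $(k, l) < (k', l')$ the diagonal $w$-cell is defined by cases: if $k = k'$ (so $l < l'$), it is the slot-$k$ restriction of $N$ applied to the diagonal $w$-cell of $M_k$ for the pair $(l, l')$; if $k < k'$, it is the diagonal $w$-cell of $N$ for the pair $(k, k')$, whiskered on either side by the $1$-cells produced by $M_k$ and $M_{k'}$ from the given morphisms (using $w$-/$\wb$-naturality of $M_k$, $M_{k'}$ in their remaining slots). One then checks the four clauses of \cref{F-multifunctor}: the comparison cell is a $w'$-cell whenever one of the two morphisms is tight, because $\F$-functors preserve tightness and both $N$ and the $M_k$ satisfy this; the $w$- and $\wb$-direction clauses, as well as $p$-invertibility, persist because $2$-functors preserve loose $w$-natural transformations and whiskering preserves the orientation and invertibility of $2$-cells; and the cubical compatibility for a triple of slots reduces — by case analysis on how the three slots distribute among the $M_k$ — to the cubical condition for $N$ or for a single $M_k$, combined with the interchange law in $\fc C$.

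Finally, associativity and unitality of this composition hold by the same argument as for Verity's (non-enhanced, lax) multicategory of $2$-categories: all of the data is assembled out of composition of $\F$-functors and whiskering of $2$-cells, operations which are strictly associative and unital, so no coherence is required; alternatively, one observes that the assignment forgetting the refinements (remembering only the underlying families of $\F$-functors and lax comparison $2$-cells) is injective on multimorphisms and preserves composites and identities, so the axioms transfer from \textcite[\S1.3]{verity1992enriched}. The main obstacle will be purely organisational: the well-definedness of composition, and in particular the verification of the direction and cubical-compatibility clauses for diagonal $w$-cells, requires a finite but fiddly case analysis of large pasting diagrams in $\fc C$ when a pair or triple of slots straddles the boundary between two distinct inner multifunctors $M_k$ and $M_{k'}$ — and it is exactly here that the consistent ``$w$ in the later slot, $\wb$ in the earlier slot'' orientation of \cref{F-multifunctor} is what makes the whiskering fit together.
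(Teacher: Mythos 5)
Your proposal is correct and follows essentially the same route as the paper: the paper likewise defines identities and composites as in Verity's cubical multicategory and reduces the proof to checking that the extra $\F$-categorical refinements (tightness of the partial object assignments and the $w'$-/orientation constraints on the comparison 2-cells) are preserved under composition, with associativity and unitality inherited from the underlying (unrefined) multicategory. Your explicit case analysis for the diagonal cells and the observation that the forgetful assignment to Verity's multimorphisms is injective are just a more detailed spelling-out of what the paper leaves to "expanding the definitions".
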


\begin{proof}
	Identities and composites of $(w',l)$-natural $\F$-multifunctors are defined as in \cite[Definition~1.3.3]{verity1992enriched}. It is therefore necessary only to check that identities and composites respect the $\F$-functorial structure of the object assignments, and restrict to $w$-cells appropriately on the 2-cell components, both of which follow easily by expanding the definitions. This defines a multicategory $\FCAT_{w', l}$. The case of $(w', c)$-natural $\F$-multifunctors follows dually. The cases $w = s$ and $w = p$ are straightforward restrictions of the above cases. Finally, it is immediate that the underlying category of unary multimorphisms is simply $\FCAT$.
\end{proof}

\begin{proposition}
	\label{multicategory-symmetry}
	$\FCAT_{w', w}$ is symmetric for $w' \leq w \leq p$.
\end{proposition}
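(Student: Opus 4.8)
The plan is to construct, for each permutation $\varrho$ of $\{1, \ldots, n\}$, the required action on $(w', w)$-natural $\F$-multifunctors by relabelling the inputs, and to observe that the only obstruction to this being well-defined is the directional constraint built into \cref{F-multifunctor} — an obstruction that disappears exactly when $w \leq p$, since then $w = \wb$. Recall from \cref{F-multifunctor} that the $w$-cell assigned to a pair $(s_i, s_j)$ with $i < j$ makes $M(S_1, \ldots, S_i, \ldots, {-}_j, \ldots, S_n)$ into a loose $w$-natural transformation in the later variable, but $M(S_1, \ldots, {-}_i, \ldots, S_j, \ldots, S_n)$ into a loose $\wb$-natural transformation in the earlier variable. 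Permuting the inputs can reverse which of two variables is ``earlier'', and would thus interchange the roles of $w$ and $\wb$; when $w$ is $s$ or $p$ this interchange does no harm (an identity $2$-cell remains an identity $2$-cell, and an invertible $2$-cell is sent to its inverse, which is again invertible), whereas for $w \in \{l, c\}$ it genuinely changes the notion of multifunctor, so no symmetry can exist there. (When $w = w' = s$, all the $w$-cells are identities and the induced tensor is the cartesian product of $\F$-categories, for which symmetry is evident.)

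Concretely, given a $(w', w)$-natural $\F$-multifunctor $M$ from $\fc S_1, \ldots, \fc S_n$ to $\fc C$ and a permutation $\varrho$, define $\varrho M$ from $\fc S_{\varrho 1}, \ldots, \fc S_{\varrho n}$ to $\fc C$ on objects by $(\varrho M)(S_{\varrho 1}, \ldots, S_{\varrho n}) \defeq M(S_1, \ldots, S_n)$, transporting the separate-$\F$-functoriality data of item~(1) of \cref{F-multifunctor} along $\varrho$ in the evident way (an $\F$-functor in the $i$-th variable of $M$ becomes an $\F$-functor in the $\varrho^{-1}i$-th variable of $\varrho M$). For the $w$-cell data of item~(2), attached to a pair of positions $k < l$ in the permuted list: if $\varrho k < \varrho l$, retain the $w$-cell $M(\ldots, s_{\varrho k}, \ldots, s_{\varrho l}, \ldots)$; if $\varrho k > \varrho l$, use instead the $w$-cell $M(\ldots, s_{\varrho l}, \ldots, s_{\varrho k}, \ldots)$ with its direction reversed (that is, its inverse when $w = p$, and itself when $w = s$). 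Conditions (2a)--(2c) of \cref{F-multifunctor} then hold for $\varrho M$: (2a) is symmetric in its two arguments, while (2b) and (2c) are exchanged by the relabelling, which is consistent precisely because $w = \wb$. Condition (2d), the cubical compatibility, is verified by a direct diagram chase: reversing the appropriate subset of faces of each compatibility cube (and inverting the corresponding $p$-cells when $w = p$) preserves commutativity.

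It remains to check that $\varrho({-})$ is natural and compatible with pre- and postcomposition of $\F$-multifunctors, and that the assignment $\varrho \mapsto \varrho({-})$ is functorial. The compatibility with composition is routine, since composition in $\FCAT_{w', w}$ is computed componentwise in a manner that commutes with relabelling inputs, and the identity permutation clearly acts as the identity. Functoriality, $(\varrho' \varrho)({-}) = \varrho'(\varrho({-}))$, is seen by tracking, for each pair of positions, how many of $\varrho$ and $\varrho'$ reverse its relative order: a pair reversed by both is not reversed by the composite, matching the fact that inverting a $p$-cell twice recovers it — this is the point at which invertibility of $p$-cells, equivalently $w \leq p$, is essential. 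I expect the main obstacle to be precisely this bookkeeping: verifying condition (2d) for $\varrho M$ and confirming that the ``parity of reversals'' is globally coherent across all the cubical constraints simultaneously, which is straightforward but delicate.
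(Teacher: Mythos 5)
Your proof is correct and matches the paper's argument in substance: the paper simply cites the symmetry of Verity's multicategory of cubical multifunctors \cite[\S1.3]{verity1992enriched} and observes that the potential noninvertibility of the 2-cell components is the only obstruction, which is precisely the point you identify and then work out explicitly (relabelling inputs, inverting the $p$-cells for order-reversed pairs, and noting that conditions on the two variables are exchanged harmlessly because $w = \wb$ when $w \leq p$). Yours is just the unpacked version of the proof the paper delegates to the citation.
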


\begin{proof}
	Symmetry follows as for the symmetry of the multicategory of bicategories and pseudonatural cubical multifunctors in \cite[\S1.3]{verity1992enriched}, the potential noninvertibility of the 2-cell components of an $\F$-multifunctor being the only obstruction to symmetry.
\end{proof}

We shall now show that $\FCAT_{w', w}$ is closed. Since $\FCAT_{w', w}$ is not symmetric for general $w' \leq w \in \W$, we must exhibit both left- and right-closed structure.

\begin{proposition}
	\label{FCAT-is-closed}
	For each pair of weaknesses $w' \leq w \in \W$, the $\F$-categories $\FFun_{\wbp, \wb}({-}, {-})$ and $\FFun_{w', w}({-}, {-})$ of \cref{functor-F-category} equip $\FCAT_{w', w}$ with left- and right-closed structure respectively.
\end{proposition}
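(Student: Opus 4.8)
The task is to produce, for each $w' \le w \in \W$, natural isomorphisms
\[
\FCAT_{w', w}(\fc X_1, \ldots, \fc X_n, \fc Y; \fc Z) \iso \FCAT_{w', w}(\fc X_1, \ldots, \fc X_n; \FFun_{w', w}(\fc Y, \fc Z))
\]
witnessing right-closedness, and
\[
\FCAT_{w', w}(\fc X, \fc Y_1, \ldots, \fc Y_n; \fc Z) \iso \FCAT_{w', w}(\fc Y_1, \ldots, \fc Y_n; \FFun_{\wbp, \wb}(\fc X, \fc Z))
\]
witnessing left-closedness. The plan is to obtain these as currying bijections, following the multicategorical treatment of the Gray tensor products in \cite[\S1.3]{verity1992enriched} (of which $\FCAT_{s, l}$ is a special case, as noted above) and \cite[I,4]{gray1974formal}. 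The purely 2-categorical combinatorics may therefore be taken for granted, so that the only genuinely new work is to check that the enhanced data --- the distinction between tight and loose morphisms, and between the four weaknesses $w'$, $w$, $\wbp$, $\wb$ --- matches up correctly across the bijection.

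For right-closedness, given a $(w', w)$-natural $\F$-multifunctor $M \colon \fc X_1, \ldots, \fc X_n, \fc Y \to \fc Z$, define $\widehat M \colon \fc X_1, \ldots, \fc X_n \to \FFun_{w', w}(\fc Y, \fc Z)$ by $\widehat M(X_1, \ldots, X_n) \defeq M(X_1, \ldots, X_n, {-})$, which is an $\F$-functor $\fc Y \to \fc Z$ by item (1) of \cref{F-multifunctor}, hence an object of $\FFun_{w', w}(\fc Y, \fc Z)$. For a loose morphism $s_i \colon X_i \lto X_i'$ in $\fc X_i$, the family $M(X_1, \ldots, s_i, \ldots, X_n, {-})$ is, by item (2)(b) of \cref{F-multifunctor} applied to the pair of indices $(i, n+1)$, a loose $w$-natural transformation $\fc Y \to \fc Z$; moreover item (2)(a) makes its naturality $2$-cell at a tight morphism of $\fc Y$ a $w'$-cell, so it restricts to a $w'$-natural transformation on $\fc Y_\tau$ --- that is, it is a loose morphism of $\FFun_{w', w}(\fc Y, \fc Z)$, tight precisely when $s_i$ is. The binary $w$-cell data of $\widehat M$ are the modifications whose components are the $2$-cells $M(X_1, \ldots, s_i, \ldots, s_j, \ldots, X_n, Y)$, and the requirement that these assemble into modifications satisfying the cubical law of item (2)(d) is exactly the assertion that $\widehat M$ is a $(w', w)$-natural $\F$-multifunctor into $\FFun_{w', w}(\fc Y, \fc Z)$. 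Uncurrying, $N \mapsto \bigl((X_1, \ldots, X_n, Y) \mapsto N(X_1, \ldots, X_n)(Y)\bigr)$, is inverse to this, and all of the data is visibly natural in $\fc X_1, \ldots, \fc X_n, \fc Y, \fc Z$; equivalently, one checks that the evaluation multimorphism $\mathrm{ev} \colon \FFun_{w', w}(\fc Y, \fc Z), \fc Y \to \fc Z$ has the requisite universal property.

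For left-closedness one curries the \emph{first} argument of $M \colon \fc X, \fc Y_1, \ldots, \fc Y_n \to \fc Z$, setting $\widehat M(Y_1, \ldots, Y_n) \defeq M({-}, Y_1, \ldots, Y_n)$. The key point is that, for a loose $s_i \colon Y_i \lto Y_i'$ in $\fc Y_i$, the naturality square in $\fc X$ of the transformation $M({-}, \ldots, s_i, \ldots) \colon M({-}, \ldots, Y_i, \ldots) \tto M({-}, \ldots, Y_i', \ldots)$ is the \emph{transpose} (reflection across the main diagonal) of the square appearing in item (2) of \cref{F-multifunctor} for the pair $(1, i+1)$ --- the $\fc X$-direction that was ``horizontal'' there is ``vertical'' here. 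Since transposing a square interchanges $l$-cells with $c$-cells while fixing $s$- and $p$-cells, the ``$\wb$'' in item (2)(c) is precisely what makes this transformation $\wb$-natural, and the ``$w'$-cell'' clause of item (2)(a), once transposed, becomes a ``$\wbp$-cell'' clause; hence the transformation restricts to a $\wbp$-natural transformation on $\fc X_\tau$, i.e. is a loose morphism of $\FFun_{\wbp, \wb}(\fc X, \fc Z)$. The remaining verification (modifications, cubical law, naturality, inverse uncurrying) then proceeds exactly as before.

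The main obstacle is precisely this bookkeeping: pinning down that item (2)(c) of \cref{F-multifunctor}, with its ``$\wb$'', together with the ``$w'$-cell'' clause of item (2)(a), transpose to the conditions defining the loose and tight morphisms of $\FFun_{\wbp, \wb}$ respectively (and correspond without a transpose to those of $\FFun_{w', w}$ on the right-closed side), and that the cubical compatibility of item (2)(d) is equivalent to $\widehat M$ being a bona fide multifunctor valued in a functor $\F$-category (\cref{functor-F-category}). Given the enriched-categorical precedents, none of this should be deep; but it is where all the care lies, and it is the reason the bars appear on exactly one of the two internal homs.
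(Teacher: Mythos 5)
Your proposal is correct and takes essentially the same approach as the paper: an explicit currying/uncurrying bijection in the style of Verity, where the only new content is checking that clauses (2)(a)--(c) of the multifunctor definition match the tightness and $(w', w)$- versus $(\wbp, \wb)$-naturality conditions defining the two internal homs, with the bars arising exactly because transposing the square data swaps $l$- and $c$-cells. The paper writes out only the left-closed case (uncurrying a multifunctor valued in $\FFun_{\wbp, \wb}(\fc S_1, \fc C)$) and obtains right-closure by symmetric reasoning, which is simply the mirror image of your presentation.
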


\begin{proof}
	We must show that there are isomorphisms
	\begin{align}
		\FCAT_{w', w}(\fc S_1, \ldots, \fc S_n; \fc C) & \iso \FCAT_{w', w}(\fc S_2, \ldots, \fc S_n; \FFun_{\wbp, \wb}(\fc S_1, \fc C)) \\
		\label{right-closure} \FCAT_{w', w}(\fc S_1, \ldots, \fc S_n; \fc C) & \iso \FCAT_{w', w}(\fc S_1, \ldots, \fc S_{n - 1}; \FFun_{w', w}(\fc S_n, \fc C))
	\end{align}
	natural in $\fc S_1, \ldots, \fc S_n, \fc C$. The proof is analogous to its correspondent for 2-categories~\cite[Lemma~1.3.4]{verity1992enriched}. Consider a $(w', w)$-natural $\F$-multifunctor $M \colon (\fc S_2, \ldots, \fc S_n) \to \FFun_{\wbp, \wb}(\fc S_1, \fc C)$. Explicitly, $M$ comprises the following data, with \mbox{(1 -- 3)} corresponding to \cref{separate-functoriality}, and (4) corresponding to \cref{w-cell-data}.
	\begin{enumerate}
		\item For each $S_2 \in \fc S_2, \ldots, S_n \in \fc S_n$, an $\F$-functor $M(S_2, \ldots, S_n) \colon \fc S_1 \to \fc C$.
		\item For each loose morphism $s_i \colon S_i \lto S_i'$ in $\fc S_i$ for $1 < i \leq n$, a $\wb$-natural family,
		\[\{ M(S_2, \ldots, s_i, \ldots, S_n)_{S_1} \colon M(S_2, \ldots, S_i, \ldots, S_n)(S_1) \lto M(S_2, \ldots, S_i', \ldots, S_n)(S_1) \}_{S_1 \in \fc S_1}\]
		comprising a family of $\wb$-cells in $\fc C$, for each $s_1 \colon S_1 \lto S_1'$ in $\fc S_1$,
		\[\begin{tikzcd}[column sep=8em,row sep=large]
			{M(S_2, \ldots, S_i, \ldots, S_n)(S_1)} & {M(S_2, \ldots, S_i', \ldots, S_n)(S_1)} \\
			{M(S_2, \ldots, S_i, \ldots, S_n)(S_1')} & {M(S_2, \ldots, S_i', \ldots, S_n)(S_1')}
			\arrow["{M(S_2, \ldots, s_i, \ldots, S_n)_{S_1}}", squiggly, from=1-1, to=1-2]
			\arrow["{M(S_2, \ldots, s_i, \ldots, S_n)_{S_1'}}"', squiggly, from=2-1, to=2-2]
			\arrow[""{name=0, anchor=center, inner sep=0}, "{M(S_2, \ldots, S_i, \ldots, S_n)(s_1)}"', squiggly, from=1-1, to=2-1]
			\arrow[""{name=1, anchor=center, inner sep=0}, "{M(S_2, \ldots, S_i', \ldots, S_n)(s_1)}", squiggly, from=1-2, to=2-2]
			\arrow["{M(S_2, \ldots, s_i, \ldots, S_n)_{s_1}}"{description}, draw=none, from=0, to=1]
		\end{tikzcd}\]
		such that
		\begin{enumerate}
			\item $M(S_2, \ldots, s_i, \ldots, S_n)_{S_1}$ is tight if $s_i$ is tight;
			\item $M(S_2, \ldots, s_i, \ldots, S_n)_{s_1}$ is a $\wbp$-cell if either $s_1$ or $s_i$ is tight.
		\end{enumerate}
		\item For each 2-cell $\sigma_i \colon s_i \tto s_i'$, a modification $M(S_2, \ldots, \sigma_i, \ldots, S_n) \colon M(S_2, \ldots, s_i, \ldots, S_n) \tto M(S_2, \ldots, s_i', \ldots, S_n)$.
		\item For each $S_2 \in \fc S_2, \ldots, S_n \in \fc S_n$ and pair of loose morphisms $s_i \colon S_i \lto S_i'$ in $\fc S_i$ and $s_j \colon S_j \lto S_j'$ in $\fc S_j$ for $1 < i < j \leq n$, a $w$-cell in $\FFun_{\wbp, \wb}(\fc S_1, \fc C)$, hence a family of $w$-cells in $\fc C$, with $S_1 \in \fc S_1$, forming a modification
		\[\begin{tikzcd}[column sep=10em,row sep=large]
			{M(S_2, \ldots, S_i, \ldots, S_j, \ldots, S_n)(S_1)} & {M(S_2, \ldots, S_i', \ldots, S_j, \ldots, S_n)(S_1)} \\
			{M(S_2, \ldots, S_i, \ldots, S_j', \ldots, S_n)(S_1)} & {M(S_2, \ldots, S_i', \ldots, S_j', \ldots, S_n)(S_1)}
			\arrow["{M(S_2, \ldots, s_i, \ldots, S_j, \ldots, S_n)_{S_1}}", squiggly, from=1-1, to=1-2]
			\arrow[""{name=0, anchor=center, inner sep=0}, "{M(S_2, \ldots, S_i, \ldots, s_j, \ldots, S_n)(S_1)}"{description}, squiggly, from=1-1, to=2-1]
			\arrow[""{name=1, anchor=center, inner sep=0}, "{M(S_2, \ldots, S_i', \ldots, s_j, \ldots, S_n)(S_1)}"{description}, squiggly, from=1-2, to=2-2]
			\arrow["{M(S_2, \ldots, s_i, \ldots, S_j', \ldots, S_n)_{S_1}}"', squiggly, from=2-1, to=2-2]
			\arrow["{M(S_2, \ldots, s_i, \ldots, s_j, \ldots, S_n)_{S_1}}"{description}, draw=none, from=1, to=0]
		\end{tikzcd}\]
		such that
		\begin{enumerate}
			\item $M(S_2, \ldots, s_i, \ldots, s_j, \ldots, S_n)_{S_1}$ is a $w'$-cell if either $s_i$ or $s_j$ is tight;
			\item the assignment $M(S_2, \ldots, s_i, \ldots, {-}_j, \ldots, S_n)_{S_1}$ is equipped with the structure of a loose $w$-natural transformation
			\[M(S_2, \ldots, S_i, \ldots, {-}_j, \ldots, S_n)_{S_1} \tto M(S_2, \ldots, S_i', \ldots, {-}_j, \ldots, S_n)_{S_1}\]
			\item the assignment $M(S_2, \ldots, {-}_i, \ldots, s_j, \ldots, S_n)_{S_1}$ is equipped with the structure of a loose $\wb$-natural transformation
			\[M(S_2, \ldots, {-}_i, \ldots, S_j, \ldots, S_n)_{S_1} \tto M(S_2, \ldots, {-}_i, \ldots, S_j', \ldots, S_n)_{S_1}\]
			\item for each loose morphism $s_i \colon S_i \lto S_i'$ in $\fc S_i$, $s_j \colon S_j \lto S_j'$ in $\fc S_j$, and $s_k \colon S_k \lto S_k'$ in $\fc S_k$ for $1 < i < j < k \leq n$, the cubical compatibility condition holds.
		\end{enumerate}
	\end{enumerate}
	By inspection, (1 -- 3) above specifies the data of a $(w', w)$-natural $\F$-multifunctor $(\fc S_1, \ldots, \fc S_n) \to \fc C$ involving the first argument; whilst (4) specifies the data of such a multifunctor in the remaining arguments. Note that the $\wb$- and $\wbp$-cells arising in (2) become $w$- and $w'$-cells under this specification, because the order of arguments is swapped.
	We hence obtain a $(w', w)$-natural $\F$-multifunctor $M' \colon (\fc S_1, \ldots, \fc S_n) \to \fc C$ by the following definitions.
	\begin{align*}
		M'({-}, S_2, \ldots, S_n) & \defeq M(S_1, \ldots, S_n)\ph \\
		M'(S_1, S_2, \ldots, {-}_i, \ldots, S_n) & \defeq M(S_2, \ldots, {-}_i, \ldots, S_n)_{S_1}
	\end{align*}
	It is clear that the assignment $M \mapsto M'$ is bijective and natural in $\fc S_1, \ldots, \fc S_n, \fc C$. The isomorphism \eqref{right-closure} follows by symmetric reasoning.
\end{proof}

Observe that the left-closed structure on $\FCAT_{w', w}$ coincides with the right-closed structure on $\FCAT_{\wbp, \wb}$. Next, we extend \citeauthor{gray1974formal}'s $w$-tensor product of 2-categories to a $(w', w)$-tensor product of $\F$-categories.

\begin{definition}[Tensor product of $\F$-categories]
	\label{tensor-product}
	Let $w' \leq w \in \W$ be weaknesses, and let $\fc S$ and $\fc T$ be small $\F$-categories. Define a small $\F$-category $\fc S \otimes_{w', w} \fc T$ by the following presentation.
	\begin{enumerate}
		\item An object is a pair $(S, T)$ where $S \in \fc S$ and $T \in \fc T$.
		\item Loose morphisms are freely generated by the following, subject to equations expressing functoriality in each argument separately.
		\begin{enumerate}
			\item Loose morphisms $(s, 1) \colon (S, T) \lto (S', T)$ for $s \colon S \lto S'$ in $\fc S$.
			\item Loose morphisms $(1, t) \colon (S, T') \lto (S, T')$ for $t \colon T \lto T'$ in $\fc T$.
		\end{enumerate}
		\item Tightness is determined inductively by the generators: a loose morphism $(s, 1) \colon (S, T) \lto (S', T)$ is tight if $s$ is tight in $\fc S$; a loose morphism $(1, t) \colon (S, T) \lto (S, T')$ is tight if $t$ is tight in $\fc T$.
		\item 2-cells are freely generated by the following, subject to the same equations as in \cite[Theorem~{I,4.9}]{gray1974formal}.
		\begin{enumerate}
			\item 2-cells $(\sigma, 1) \colon (s, 1) \tto (s', 1) \colon (S, T) \lto (S', T)$ for $\sigma \colon s \tto s'$ in $\fc S$.
			\item 2-cells $(1, \tau) \colon (1, t) \tto (1, t') \colon (S, T) \lto (S, T')$ for $\tau \colon t \tto t'$ in $\fc T$.
			\item $w$-cells
			\[\begin{tikzcd}
				{(S, T)} & {(S, T')} \\
				{(S', T)} & {(S', T')}
				\arrow["{(1, t)}", squiggly, from=1-1, to=1-2]
				\arrow[""{name=0, anchor=center, inner sep=0}, "{(s, 1)}"', squiggly, from=1-1, to=2-1]
				\arrow[""{name=1, anchor=center, inner sep=0}, "{(s, 1)}", squiggly, from=1-2, to=2-2]
				\arrow["{(1, t)}"', squiggly, from=2-1, to=2-2]
				\arrow["{\upsilon_{s, t}}"{description}, draw=none, from=0, to=1]
			\end{tikzcd}\]
			for $s \colon S \lto S'$ in $\fc S$ and $t \colon T \lto T'$ in $\fc T$, such that $\upsilon_{s, t}$ is a $w'$-cell if either $s$ or $t$ is tight.
		\end{enumerate}
	\end{enumerate}
	Define $\fc S \otimes_w \fc T \defeq \fc S \otimes_{s, w} \fc T$.
\end{definition}

\begin{theorem}
	\label{FCAT-is-representable}
	Let $w' \leq w \in \W$ be weaknesses. The tensor product $\otimes_{w', w}$ of $\F$-categories exhibits the multicategory $\FCAT_{w', w}$ as representable, and so equips the category $\FCAT$ with closed semicartesian monoidal structure.
\end{theorem}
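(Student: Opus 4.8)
The plan is to invoke \cref{representability-simplified}: since \cref{FCAT-is-closed} already establishes that $\FCAT_{w', w}$ is closed, it suffices to verify the conditions of \cref{representable} only in the case $n = 0$, that is, to check that the unit $\bbn 1$ and the tensor products $\otimes_{w', w}$ of \cref{tensor-product} represent nullary and binary multimorphisms respectively. Once this is done, the general equivalence between representable closed multicategories and closed monoidal categories~\cite[\S9]{hermida2000representable} upgrades the underlying category $\FCAT$ of unary multimorphisms to a closed monoidal category, with associativity and unit coherence obtained for free; and the resulting monoidal structure is semicartesian because the unit $\bbn 1$ is terminal in $\FCAT$. (One should also note in passing that $\fc S \otimes_{w', w} \fc T$ is small, which holds because it is presented by a small amount of generating data whenever $\fc S$ and $\fc T$ are small.)

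The unit is immediate: taking $I \defeq \bbn 1$ with its unique nullary multimorphism, one has $\FCAT_{w', w}(\bbn 1; \fc C) \iso \FCAT_{w', w}(; \fc C)$ naturally in $\fc C$, since an $\F$-functor $\bbn 1 \to \fc C$ and a nullary $(w', w)$-natural $\F$-multifunctor to $\fc C$ both amount to a choice of object of $\fc C$ (\cref{F-multifunctor}).

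For the tensor product, I would equip $\fc S \otimes_{w', w} \fc T$ with the binary multimorphism $\fc S, \fc T \to \fc S \otimes_{w', w} \fc T$ sending $(S, T) \mapsto (S, T)$, the morphisms $(s, 1)$ and $(1, t)$ and 2-cells $(\sigma, 1)$, $(1, \tau)$ to the generators of the same name, and the structure $w$-cell to $\upsilon_{s, t}$, and then show that precomposition with it induces a bijection
\[\FCAT_{w', w}(\fc S \otimes_{w', w} \fc T; \fc C) \iso \FCAT_{w', w}(\fc S, \fc T; \fc C)\]
natural in $\fc S, \fc T, \fc C$. This is a direct comparison of universal properties: because $\fc S \otimes_{w', w} \fc T$ is given by a presentation, an $\F$-functor out of it into $\fc C$ is exactly a choice of image in $\fc C$ for each generating object, loose morphism, and 2-cell, subject to the listed relations; and this data matches, clause by clause, the data of a binary $(w', w)$-natural $\F$-multifunctor $(\fc S, \fc T) \to \fc C$ in \cref{F-multifunctor}. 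Concretely: the images of the generating objects and of the morphisms $(s, 1)$, $(1, t)$ and 2-cells $(\sigma, 1)$, $(1, \tau)$, together with the functoriality relations, encode precisely the separate $\F$-functoriality in each argument; the images of the generators $\upsilon_{s, t}$, constrained to land in $w'$-cells whenever $s$ or $t$ is tight, encode precisely the structure $w$-cells of an $\F$-multifunctor with their tightness clause; the relations of \cite[Theorem~{I,4.9}]{gray1974formal} imposed in \cref{tensor-product} encode precisely the loose $w$- and $\wb$-naturality in each variable together with the cubical compatibility condition; and the inductive definition of tightness in $\fc S \otimes_{w', w} \fc T$ ensures that such an $\F$-functor preserves tightness if and only if the corresponding multifunctor meets the tightness requirements of \cref{F-multifunctor}. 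Naturality of the bijection in $\fc S, \fc T, \fc C$ is then routine.

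The main obstacle I expect is the bookkeeping in this last matching — verifying that the Gray-style relations in \cref{tensor-product} are neither too many nor too few relative to \cref{F-multifunctor}, and that the tight/loose distinction and the weaknesses $w' \leq w$ thread through correctly. Modulo tightness, however, this is exactly the computation already performed for the $w$-tensor product of 2-categories by \textcite[Theorem~{I,4.9}]{gray1974formal} and, multicategorically, by \textcite[\S1.3]{verity1992enriched}, so I would import their verification of the relations essentially verbatim and concentrate only on the additional tightness clauses, which were built into \cref{tensor-product} precisely to mirror those appearing in \cref{F-multifunctor} and \cref{FCAT-is-closed}. With representability established, combining it with \cref{FCAT-is-closed} and Hermida's equivalence yields the asserted closed semicartesian monoidal structure $\otimes_{w', w}$ on $\FCAT$.
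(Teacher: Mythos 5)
Your proposal is correct and follows essentially the same route as the paper's proof: reduce to the $n=0$ case of representability via closedness and \cref{representability-simplified}, observe the unit isomorphism directly from \cref{F-multifunctor}, and establish the binary tensor product by noting that the construction restricts on loose morphisms to Gray's tensor product of 2-categories, so that only the tightness clauses (on the generating morphisms and on $\upsilon_{s,t}$) need checking against \cref{F-multifunctor}. Your added remarks on smallness and on semicartesianness via terminality of $\bbn 1$ are correct but not part of the paper's argument.
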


\begin{proof}
	Recall from \cref{representability-simplified} that, since $\FCAT_{w', w}$ is closed by \cref{FCAT-is-closed}, it suffices to check \cref{representable} in the case $n = 0$. It is immediate from inspection of \cref{F-multifunctor} that there is an isomorphism as follows, thus exhibiting unitality.
	\[\FCAT_{w', w}(\bbn 1; \fc C) \iso \FCAT_{w', w}({}; \fc C)\]
	Next, we shall show that $\otimes_{w', w}$ exhibits a binary tensor product, for which we must exhibit an isomorphism as follows.
	\[\FCAT_{w', w}(\fc S \otimes_{w', w} \fc T; \fc C) \iso \FCAT_{w', w}(\fc S, \fc T; \fc C)\]
	By construction, our $(w', w)$-weak tensor product of $\F$-categories restricts on loose morphisms to the usual $w$-weak tensor product of 2-categories~\cite{gray1974formal}. Therefore, it is enough to show that the tensor product behaves appropriately with respect to tight morphisms.

	For this, observe that there are two places tightness plays a role in \cref{F-multifunctor}: the first in the separate $\F$-functoriality of \cref{separate-functoriality}; the second in the $w$-cell data of \cref{w-cell-data}. The tightness condition in the former corresponds in \cref{tensor-product} to the tightness of $(s, 1)$ and $(1, t)$ when $s$ and $t$ respectively are tight. The latter corresponds to the condition that $\upsilon_{s, t}$ is a $w'$-cell when either $s$ or $t$ is tight.
\end{proof}

\begin{remark}
	Just as $\FFun_s$ is the usual enriched category of enriched functors, $\otimes_s$ is the usual tensor product of enriched categories~\cite[\S1.4]{kelly1982basic}. Consequently, since $\F$ is cartesian, $\otimes_s$ is the cartesian product of $\F$-categories. The description of \cref{tensor-product} can be simplified in this case: $(\fc S \times \fc T)_\lambda = \fc S_\lambda \times \fc T_\lambda$ and a morphism $(s, t)$ is tight just when both $s$ and $t$ are tight.
\end{remark}

\subsection{The multicategory of enhanced 2-sketches}

We now refine the notion of \mbox{$\F$-multifunctor} to the case in which the objects in the domain of the multimorphism are $\F$-sketches, and in which each argument is required to preserve weighted cones.

\begin{definition}
	Let $w' \leq w \in \W$ be weaknesses, and let $\sk S_1, \ldots, \sk S_n, \sk C$ be $\F$-sketches. A \emph{$(w', w)$-natural $\F$-multimodel} $M$ from $\sk S_1, \ldots, \sk S_n$ to $\sk C$, for $n > 0$, is a $(w', w)$-natural \mbox{$\F$-multifunctor} $M \colon (\fc S_1, \ldots, \fc S_n) \to \fc C$ such that, for each $S_1 \in \fc S_1, \ldots, S_n \in \fc S_n$, the $\F$-functor $M(S_1, \ldots, {-}_i, \ldots, S_n) \colon \fc S_i \to \fc C$ is an $\sk S_i$-model. A nullary $(w', w)$-natural \mbox{$\F$-multimodel} to $\sk C$ is simply an object of $\fc C$.
\end{definition}

Binary multimorphisms generalise the classic notions of bimodel for sketches (\cf{}~\cite{freyd1966algebra,borceux1994notion}).

\begin{proposition}
	\label{FCAT-FSK-coreflection}
	$\F$-sketches and $(w', w)$-natural $\F$-multimodels form a multicategory structure $\FSK_{w', w}$ on $\FSK$, for each pair of weaknesses $w' \leq w \in \W$. Furthermore, the forgetful functor $\FSK_{w', w} \to \FCAT_{w', w}$ of multicategories has a \ff{} left adjoint.
	\[\begin{tikzcd}
		{\FCAT_{w', w}} & {\FSK_{w', w}}
		\arrow[""{name=0, anchor=center, inner sep=0}, "{\ph^{\sharp}}", shift left=2, hook, from=1-1, to=1-2]
		\arrow[""{name=1, anchor=center, inner sep=0}, shift left=2, from=1-2, to=1-1]
		\arrow["\dashv"{anchor=center, rotate=-90}, draw=none, from=0, to=1]
	\end{tikzcd}\]
\end{proposition}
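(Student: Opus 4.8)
The plan is to let the multicategory operations of $\FSK_{w', w}$ be those of $\FCAT_{w', w}$ and verify that the model condition is preserved, and then to build the left adjoint explicitly via the free, empty-cone $\F$-sketch.

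For the first assertion, I would take identities and composites of $(w', w)$-natural $\F$-multimodels to be those of the underlying $(w', w)$-natural $\F$-multifunctors in $\FCAT_{w', w}$, so that only the stability of the model condition remains to be checked. The identity $\F$-multifunctor on $\fc S$ restricts, in its single argument, to the identity $\F$-functor $\fc S \to \fc S$, which is an $\sk S$-model, so identities present no difficulty. For composition, the crucial point is that composition in $\FCAT_{w', w}$ --- defined as in \cite{verity1992enriched} --- restricts, in each argument separately, to ordinary composition of the corresponding $\F$-functors; this is exactly the content of the separate-functoriality clause of \cref{F-multifunctor}. Hence, given composable families of $\F$-multimodels and a chosen argument, the restriction of their composite in that argument --- for each way of fixing the remaining arguments at objects --- is a composite of $\F$-functors, each of which is a model of the relevant $\F$-sketch by hypothesis; since morphisms of $\F$-sketches compose (\cref{sketch}), this composite is again a model. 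Thus $\F$-multimodels are closed under the operations of $\FCAT_{w', w}$, so $\FSK_{w', w}$ is a multicategory, and the forgetful assignment $\sk S \mapsto \fc S$ is then a faithful morphism of multicategories $\FSK_{w', w} \to \FCAT_{w', w}$.

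For the second assertion, I would define $\ph^\sharp$ on objects by equipping an $\F$-category $\fc C$ with the empty class of weighted cones, $\fc C^\sharp \defeq (\fc C, \varnothing)$. Since any multimorphism out of an $\F$-sketch having no chosen cones satisfies the model condition vacuously, the $(w', w)$-natural $\F$-multimodels $(\fc C_1^\sharp, \ldots, \fc C_n^\sharp) \to \fc D^\sharp$ are exactly the $(w', w)$-natural $\F$-multifunctors $(\fc C_1, \ldots, \fc C_n) \to \fc D$; thus $\ph^\sharp$ extends to a morphism of multicategories $\FCAT_{w', w} \to \FSK_{w', w}$ that is a bijection on every multi-hom-set, that is, fully faithful, and it satisfies $U \circ \ph^\sharp = \mathrm{id}_{\FCAT_{w', w}}$, where $U$ denotes the forgetful multifunctor. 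It remains only to supply unit and counit: for each $\F$-sketch $\sk S$, the identity $\F$-functor on $\fc S$ is a morphism of $\F$-sketches $\epsilon_{\sk S} \colon \fc S^\sharp \to \sk S$ (again the cone condition on the source is vacuous), and these assemble into a multinatural transformation $\epsilon \colon \ph^\sharp \circ U \tto \mathrm{id}_{\FSK_{w', w}}$; taking the unit to be $\mathrm{id} \colon \mathrm{id}_{\FCAT_{w', w}} \tto U \circ \ph^\sharp$, the triangle identities hold on the nose. This exhibits $\ph^\sharp \dashv U$ with $\ph^\sharp$ fully faithful, so $\ph^\sharp$ presents $\FCAT_{w', w}$ as a coreflective sub-multicategory of $\FSK_{w', w}$.

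I do not anticipate a genuine obstacle: the whole argument is bookkeeping, shortened by two observations --- that composition of $\F$-multifunctors is computed argument-by-argument, and that the free $\F$-sketch on an $\F$-category carries no cones, which is precisely what upgrades $\ph^\sharp$ to a morphism of multicategories and forces it to be fully faithful. The one point demanding a little care is pinning down the meaning of ``left adjoint'' here, namely an adjunction in the 2-category of multicategories, multifunctors, and multinatural transformations; this is why the plan spells it out via a unit, a counit, and the triangle identities above.
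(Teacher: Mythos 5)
Your proof is correct and takes essentially the same approach as the paper: the multicategory structure on $\FSK_{w',w}$ is inherited from $\FCAT_{w',w}$ because weighted-cone preservation is closed under identities and (argument-wise) composition, and the \ff{} left adjoint equips an $\F$-category with the empty class of cones, making the model condition vacuous. The paper's proof is simply a terser version of what you wrote; your explicit unit/counit/triangle-identity verification fills in details the paper leaves implicit.
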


\begin{proof}
	The multicategory structure of $\FCAT_{w', w}$ trivially restricts to multimodels, since weighted cone preservation is closed under identities and composition. The assignment sending each $\F$-sketch to its underlying $\F$-category is trivially functorial and faithful, since we defined the multicategory structure on $\FSK_{w', w}$ in terms of that of $\FCAT_{w', w}$. (Note, though, that this functor does not preserve the closed structure.)

	The fully faithful left adjoint $\ph^{\sharp}$ sends an $\F$-category to the trivial $\F$-sketch structure on the $\F$-category, \ie{} with an empty set of cones.
\end{proof}

\begin{proposition}
	$\FSK_{w', w}$ is symmetric for $w' \leq w \leq p$.
\end{proposition}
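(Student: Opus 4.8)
The plan is to show that the symmetry structure on $\FCAT_{w', w}$ established in \cref{multicategory-symmetry} restricts along the faithful forgetful functor $\FSK_{w', w} \to \FCAT_{w', w}$ of \cref{FCAT-FSK-coreflection} to a symmetry structure on $\FSK_{w', w}$. Since the multicategory structure of $\FSK_{w', w}$ is by definition the restriction of that of $\FCAT_{w', w}$ to those $\F$-multifunctors whose slotwise $\F$-functors are sketch morphisms, it suffices to prove that, for each permutation $\varrho$ of $\{ 1, \ldots, n \}$, the bijection
\[\varrho\ph \colon \FCAT_{w', w}(\fc S_1, \ldots, \fc S_n; \fc C) \to \FCAT_{w', w}(\fc S_{\varrho 1}, \ldots, \fc S_{\varrho n}; \fc C)\]
carries $(w', w)$-natural $\F$-multimodels $(\sk S_1, \ldots, \sk S_n) \to \sk C$ to $(w', w)$-natural $\F$-multimodels $(\sk S_{\varrho 1}, \ldots, \sk S_{\varrho n}) \to \sk C$. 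Granting this, the three axioms of a symmetric multicategory — naturality of $\varrho\ph$ and compatibility with pre- and postcomposition — are inherited immediately from $\FCAT_{w', w}$, since the forgetful functor is faithful and the relevant operations of $\FSK_{w', w}$ are computed exactly as in $\FCAT_{w', w}$.

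For the remaining claim, recall that an $\F$-multifunctor $M$ is a multimodel precisely when, for each $i$ and each choice of objects $S_j \in \fc S_j$ for $j \neq i$, the $\F$-functor $M(S_1, \ldots, {-}_i, \ldots, S_n) \colon \fc S_i \to \fc C$ preserves the specified weighted cones of $\sk S_i$. I would then unwind the construction of $\varrho\ph$, following \cite[\S1.3]{verity1992enriched}: it merely relabels the arguments, transporting the $w$-cell data of $M$ along $\varrho$ — using invertibility of those $w$-cells, available since $w \leq p$, to absorb any change of direction — and leaves the underlying slot $\F$-functors untouched. Concretely, the slot $\F$-functor $(\varrho M)(S_{\varrho 1}, \ldots, {-}_k, \ldots, S_{\varrho n}) \colon \fc S_{\varrho k} \to \fc C$ is literally $M(S_1, \ldots, {-}_{\varrho k}, \ldots, S_n)$. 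Hence $\varrho M$ preserves cones in slot $k$ exactly because $M$ preserves cones in slot $\varrho k$, and since $\varrho$ is a bijection, $\varrho M$ is a multimodel. This yields symmetry of $\FSK_{w', w}$ for $w' \leq w \leq p$.

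I do not anticipate a genuine obstacle; the only delicate point — and the sole place the hypothesis $w \leq p$ is used — is verifying that the symmetry isomorphism of $\FCAT_{w', w}$ leaves the underlying slot $\F$-functors of a multimorphism unchanged, merely permuting their indexing and the ancillary (invertible) $w$-cell data, exactly as in the proof of \cref{multicategory-symmetry}.
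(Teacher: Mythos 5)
Your proposal is correct and follows essentially the same route as the paper: the symmetry of $\FSK_{w',w}$ is inherited from that of $\FCAT_{w',w}$ (\cref{multicategory-symmetry}) because the permutation action merely relabels the slot $\F$-functors, so the multimodel condition -- preservation of weighted cones in each argument -- is manifestly independent of argument order. The paper states this in one line; your write-up just makes the same observation explicit.
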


\begin{proof}
	Follows from the symmetry of $\FCAT_{w', w}$ in \cref{multicategory-symmetry}, preservation of weighted cones being independent of argument order.
\end{proof}

We shall now show that $\FSK_{w', w}$ is closed. Again, since $\FSK_{w', w}$ is not symmetric in general, we must exhibit both left- and right-closed structure. First, we define the suitable internal hom of $\F$-sketches.

\begin{definition}[Internal hom of $\F$-sketches]
	Let $w' \leq w \in \W$ be weaknesses, and let $\sk S$ and $\sk C$ be $\F$-sketches. The $\F$-category $\FMod_{w', w}(\sk S, \sk C)$ of \cref{F-category-of-models} may be equipped with the structure of an $\F$-sketch $\SMod_{w', w}(\sk S, \sk C)$, whose weighted cones $W \tto \FMod_{w', w}(\sk S, \sk C)(X,D-)$ are precisely those whose composite with each evaluation $\F$-functor $\ph(S) \colon \FMod_{w', w}(\sk S, \sk C) \to \fc C$, for $S \in \fc S$, is a weighted cone in the $\F$-sketch $\sk C$. Define $\SMod_w(\sk S, \sk C) \defeq \SMod_{s, w}(\sk S, \sk C)$.
\end{definition}

\begin{proposition}
	\label{FSK-is-closed}
	For each pair of weaknesses $w' \leq w \in \W$, the $\F$-sketches $\SMod_{\wbp, \wb}({-}, {-})$ and $\SMod_{w', w}({-}, {-})$ equip $\FSK_{w', w}$ with left- and right-closed structure respectively.
\end{proposition}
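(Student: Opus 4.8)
The plan is to lift the closed structure of the multicategory $\FCAT_{w', w}$ established in \cref{FCAT-is-closed} to $\FSK_{w', w}$ along the faithful forgetful functor $\FSK_{w', w} \to \FCAT_{w', w}$ of \cref{FCAT-FSK-coreflection}, by checking that the currying bijections of \cref{FCAT-is-closed} restrict to multimodels, with internal homs given by the $\F$-sketch structures $\SMod_{\wbp, \wb}({-}, {-})$ and $\SMod_{w', w}({-}, {-})$ carried respectively by $\FFun_{\wbp, \wb}$ and $\FFun_{w', w}$. I spell out the right-closed case; the left-closed case is the dual argument, exchanging the first and last arguments and $w$ with $\wb$ exactly as in \cref{FCAT-is-closed}.

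Fix $\F$-sketches $\sk S_1, \ldots, \sk S_n, \sk C$. By \cref{FCAT-is-closed} there is a bijection $M \mapsto M^\flat$, natural in all arguments,
\[
	\FCAT_{w', w}(\fc S_1, \ldots, \fc S_n; \fc C) \iso \FCAT_{w', w}(\fc S_1, \ldots, \fc S_{n - 1}; \FFun_{w', w}(\fc S_n, \fc C)),
\]
under which the object assignment of $M^\flat$ is $M^\flat(S_1, \ldots, S_{n - 1}) = M(S_1, \ldots, S_{n - 1}, {-}_n) \colon \fc S_n \to \fc C$. Since the forgetful functor to $\FCAT_{w', w}$ is faithful and $\FMod_{w', w}(\sk S_n, \sk C) \hookrightarrow \FFun_{w', w}(\fc S_n, \fc C)$ is a full sub-$\F$-category, it suffices to show that $M$ is a $(w', w)$-natural $\F$-multimodel $(\sk S_1, \ldots, \sk S_n) \to \sk C$ if and only if $M^\flat$ takes values in $\FMod_{w', w}(\sk S_n, \sk C)$ and, so viewed, is a $(w', w)$-natural $\F$-multimodel $(\sk S_1, \ldots, \sk S_{n - 1}) \to \SMod_{w', w}(\sk S_n, \sk C)$; naturality of the resulting bijection is then inherited.

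For this, split the multimodel condition on $M$ into the instances ``$M(S_1, \ldots, {-}_i, \ldots, S_n)$ is an $\sk S_i$-model'' for $i < n$ and those for $i = n$. The $i = n$ instances say exactly that each $M^\flat(S_1, \ldots, S_{n - 1}) = M(S_1, \ldots, S_{n - 1}, {-}_n)$ is an $\sk S_n$-model, i.e. that $M^\flat$ factors through $\FMod_{w', w}(\sk S_n, \sk C)$; by fullness of the inclusion, this factorisation, when it exists, is again a $(w', w)$-natural $\F$-multifunctor into $\FMod_{w', w}(\sk S_n, \sk C)$. Assuming this, recall that a weighted cone lies in the $\F$-sketch $\SMod_{w', w}(\sk S_n, \sk C)$ precisely when its composite with each evaluation $\F$-functor $\ph(S_n) \colon \FMod_{w', w}(\sk S_n, \sk C) \to \fc C$ is a weighted cone of $\sk C$; since pushing a weighted cone forward along $\F$-functors is functorial, an $\F$-functor $G \colon \fc S_i \to \FMod_{w', w}(\sk S_n, \sk C)$ is an $\sk S_i$-model if and only if $\ph(S_n) \circ G$ is an $\sk S_i$-model for every $S_n \in \fc S_n$. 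Taking $G = M^\flat(S_1, \ldots, {-}_i, \ldots, S_{n - 1})$ and noting that $\ph(S_n) \circ M^\flat(S_1, \ldots, {-}_i, \ldots, S_{n - 1}) = M(S_1, \ldots, {-}_i, \ldots, S_{n - 1}, S_n)$, we conclude that $M^\flat$ is a multimodel into $\SMod_{w', w}(\sk S_n, \sk C)$ if and only if the $i < n$ instances of the multimodel condition on $M$ hold. Combining the two families of instances gives the claimed equivalence, hence right-closure; left-closure is dual.

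The only real content is the observation in the last paragraph: the ``pointwise'' definition of the weighted cones of $\SMod_{w', w}(\sk S, \sk C)$ is precisely what makes ``being a model valued in $\SMod_{w', w}(\sk S, \sk C)$'' equivalent to ``being pointwise a model valued in $\sk C$'', which is exactly what matches the $i < n$ conditions on $M$ with the model condition on the curried multifunctor. Everything else — the compatibility of currying with the inclusion $\FMod_{w', w}(\sk S_n, \sk C) \hookrightarrow \FFun_{w', w}(\fc S_n, \fc C)$, and naturality — transports essentially for free from \cref{FCAT-is-closed} via the faithfulness of \cref{FCAT-FSK-coreflection}, so I expect no genuine obstacle there.
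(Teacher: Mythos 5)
Your proposal is correct and follows essentially the same route as the paper's proof: both lift the currying bijection of \cref{FCAT-is-closed} to multimodels, using the pointwise definition of the weighted cones in $\SMod_{w',w}(\sk S_n, \sk C)$ to match the model conditions in the remaining arguments, and fullness of $\FMod \hookrightarrow \FFun$ to handle morphisms and 2-cells. The only difference is presentational (you treat right-closure first and dualise, the paper does the reverse).
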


\begin{proof}
	By definition, to give a $(w', w)$-weak $\F$-multifunctor $M \colon (\fc S_2, \ldots, \fc S_n) \to \FMod_{\wbp, \wb}(\sk S_1, \sk C)$ is to give a  $(w', w)$-weak $\F$-multifunctor $M \colon (\fc S_2, \ldots, \fc S_n) \to \FFun_{\wbp, \wb}(\fc S_1, \fc C)$ such that each $M(S_2, \ldots, S_n) \colon \fc S_1 \to \fc C$ is an $\sk S_1$-model. Moreover, $M$ is furthermore a $(w', w)$-weak $(\sk S_2, \ldots, \sk S_n)$-multimodel if and only if each $M(S_2, \ldots, {-}_i, \ldots, S_n)(S_1) \colon \fc S_i \to \fc C$ is a model of $\sk S_i$. Via left-closure of $\FCAT$, this is precisely the definition of a  $(w', w)$-weak $\F$-multimodel. The isomorphism of \cref{FCAT-is-closed} defining the left-closed structure of $\FCAT$ trivially restricts on morphisms and 2-cells, because $\F$-categories of models are full sub-$\F$-categories of functor $\F$-categories, thus establishing left-closure. Right-closure follows by symmetric reasoning.
\end{proof}

Next, we show that the $(w', w)$-weak tensor product of $\F$-categories lifts to $\F$-sketches. To define the lifting, it will be helpful to observe that, for $\F$-categories $\fc S$ and $\fc T$, and for each object $T \in \fc T$, there is an $\F$-functor,
\begin{equation}\label{T-constant}
	\fc S \xto\iso \fc S \otimes_{w', w} \bbn 1 \xto{\fc S \otimes_{w', w} T} \fc S \otimes_{w', w} \fc T
\end{equation}
constant in the right-hand argument; and similarly, for each object $S \in \fc S$, an $\F$-functor
\begin{equation}\label{S-constant}
	\fc T \xto\iso \bbn 1 \otimes_{w', w} \fc T \xto{S \otimes_{w', w} \fc T} \fc S \otimes_{w', w} \fc T
\end{equation}
constant in the left-hand argument.

\begin{definition}[Tensor product of $\F$-sketches]
	\label{tensor-of-sketches}
	Let $w' \leq w \in \W$ be weaknesses, and let $\sk S$ and $\sk T$ be $\F$-sketches. Define an $\F$-sketch $\sk S \otimes_{w', w} \sk T$ by equipping $\fc S \otimes_{w', w} \fc T$ with \mbox{$\F$-sketch} structure. For each weighted cone $(W \colon \fc J \to \fc F, D \colon \fc J \to \fc S, X \in \fc S, \gamma \colon W \tto \fc S(X, D{-}))$ in $\sk S$, and each object $T \in \fc T$, we define a $W$-weighted cone in $\fc S \otimes_{w', w} \fc T$ by postcomposing \eqref{T-constant}; similarly, for each weighted cone $(W \colon \fc J \to \fc F, D \colon \fc J \to \fc T, X \in \fc T, \gamma \colon W \tto \fc T(X, D{-}))$ in $\sk T$, and each object $S \in \fc S$, we define a $W$-weighted cone in $\fc S \otimes_w \fc T$ by postcomposing \eqref{S-constant}. Define $\sk S \otimes_w \sk T \defeq \sk S \otimes_{s,w} \sk T$.
\end{definition}

\begin{theorem}
	Let $w' \leq w\ \in \W$ be weaknesses.
	\begin{enumerate}
		\item The tensor product $\otimes_{w', w}$ of $\F$-sketches exhibits the multicategory $\FSK_{w', w}$ as representable, and so equips the category $\FSK$ with closed monoidal structure $(\FSK, \otimes_{w', w}, \bbn 1^\sharp)$.
		\item Furthermore, the coreflection of \cref{FCAT-FSK-coreflection} is monoidal.
	\end{enumerate}
\end{theorem}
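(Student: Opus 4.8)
The plan is to mirror the proof of \cref{FCAT-is-representable}, bootstrapping from the representability of $\FCAT_{w',w}$ established there together with the closedness of $\FSK_{w',w}$ from \cref{FSK-is-closed}. Since $\FSK_{w', w}$ is closed, \cref{representability-simplified} reduces part~(1) to verifying \cref{representable} only in the case $n = 0$, \ie exhibiting the nullary and binary universal multimorphisms. For the unit, $\bbn 1^\sharp$ carries the empty set of cones, so a $(w',w)$-natural $\F$-multimodel out of $\bbn 1^\sharp$ is simply a $(w',w)$-natural $\F$-multifunctor out of $\bbn 1$; the unit isomorphism $\FSK_{w',w}(\bbn 1^\sharp; \sk C) \iso \FSK_{w',w}({}; \sk C)$ therefore follows at once by restricting the corresponding isomorphism for $\FCAT_{w',w}$, both sides being identified with the underlying object of $\fc C$.

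For the binary tensor I would start from the isomorphism $\FCAT_{w',w}(\fc S \otimes_{w',w} \fc T; \fc C) \iso \FCAT_{w',w}(\fc S, \fc T; \fc C)$ of \cref{FCAT-is-representable} and show that it restricts to the desired isomorphism on multimodels. Given a $(w',w)$-natural $\F$-multifunctor $M \colon (\fc S, \fc T) \to \fc C$ with transpose $\hat M \colon \fc S \otimes_{w',w} \fc T \to \fc C$, the key observation is that, by construction of $\otimes_{w',w}$, the partial functors of $M$ are recovered as the composites $M({-}, T) = \hat M \c (\fc S \otimes_{w',w} T)$ and $M(S, {-}) = \hat M \c (S \otimes_{w',w} \fc T)$, using precisely the constant-argument $\F$-functors \eqref{T-constant} and \eqref{S-constant}. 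But \cref{tensor-of-sketches} defines the cones of $\sk S \otimes_{w',w} \sk T$ to be exactly the postcompositions of the cones of $\sk S$ and $\sk T$ along those same $\F$-functors; hence $\hat M$ preserves every cone of $\sk S \otimes_{w',w} \sk T$ if and only if $M({-},T)$ is an $\sk S$-model for every $T \in \fc T$ and $M(S,{-})$ is a $\sk T$-model for every $S \in \fc S$, \ie if and only if $M$ is a $(w',w)$-natural $\F$-multimodel. This matching of the cone-preservation condition with the multimodel condition is the step I expect to be the main obstacle; it amounts to checking that the cones placed on the tensor sketch in \cref{tensor-of-sketches} were chosen precisely so that the $\FCAT$-level isomorphism descends. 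Once this is done, a representable closed multicategory yields a closed monoidal category in the standard way, giving $(\FSK, \otimes_{w', w}, \bbn 1^\sharp)$.

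For part~(2), I would observe that the coreflection $\ph^\sharp \colon \FCAT_{w',w} \to \FSK_{w',w}$, being a morphism of multicategories between representable multicategories, automatically induces a lax monoidal functor on underlying categories, and that it is in fact strong (indeed strict) monoidal because it preserves the representing data on the nose: its value on the unit $\bbn 1$ is by definition $\bbn 1^\sharp$, and since $\fc S^\sharp$ and $\fc T^\sharp$ carry no cones, \cref{tensor-of-sketches} gives $\fc S^\sharp \otimes_{w',w} \fc T^\sharp = (\fc S \otimes_{w',w} \fc T)^\sharp$, with the universal binary multimorphism in $\FSK_{w',w}$ being the image of the one in $\FCAT_{w',w}$. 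Hence the structure comparison maps are identities and $\ph^\sharp$ is (strict) monoidal. This part is essentially formal once part~(1) is in place.
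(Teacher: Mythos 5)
Your proposal is correct and follows essentially the same route as the paper: both reduce representability to the nullary and binary cases via closedness, and both identify multimodels out of $\sk S \otimes_{w',w} \sk T$ with multimodels out of $(\sk S, \sk T)$ by observing that the cones of the tensor sketch are exactly the postcompositions along the constant-argument embeddings, so that cone preservation by the transpose matches the multimodel condition; your part~(2) likewise matches the paper's observation that the tensor of free sketches is free. The only cosmetic difference is that the paper also records explicitly that the forgetful functor is strict monoidal (immediate since the underlying $\F$-category of $\sk S \otimes_{w',w} \sk T$ is $\fc S \otimes_{w',w} \fc T$) and that the counit is monoidal, whereas you focus on $\ph^\sharp$.
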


\begin{proof}
	For the first part, we proceed analogously to \cref{FCAT-is-representable}. It is evident that the $\F$-sketch $\bbn 1^\sharp$ forms a unit for $\FSK_{w', w}$.
	For the binary product, we observe that a $(w', w)$-natural $\F$-multimodel
	$(\sk S \otimes_{w', w} \sk T) \to \sk C$
	is a $(w', w)$-natural $\F$-multifunctor
	$(\fc S \otimes_{w', w} \fc T) \to \fc C$
	that restricts to a model in each argument. By \cref{FCAT-is-representable} and examination of the $\F$-sketch structure on $\fc S \otimes_{w', w} \fc T$, this is equivalently a $(w', w)$-natural $\F$-multifunctor
	$(\fc S, \fc T) \to \fc C$
	that restricts to a model in each argument, which is precisely a $(w', w)$-natural $\F$-multimodel
	$(\sk S, \sk T) \to \sk C$, from which representability follows.

	For the second part, observe that both functors are strict monoidal: the $\F$-category underlying an $\F$-sketch $\sk S \otimes_{w', w} \sk T$ is simply $\fc S \otimes_{w', w} \fc T$; whilst the $(w', w)$-weak tensor product of two free $\F$-sketches is also free by definition; that the counit is monoidal is trivial.
\end{proof}

\section{The symmetry of internalisation}
\label{multiple-perspectives}

In this section we use the theory of enhanced 2-sketches developed in the previous section to explain the multiple perspectives on two-dimensional structures observed in \cref{F-categorical-structures}. For instance, monoidal double categories can be viewed both as pseudomonoids in $\FDblCat s p$ and as pseudocategories in $\FMonCat s p$. Correspondences such as this one are a special case of the symmetry isomorphism \[\FMod_w(\sk S, \FMod_\wb(\sk T, \fc C)) \iso \FMod_\wb(\sk T, \FMod_w(\sk S, \fc C))\]
of \cref{Mod-skew-symmetry-tight} below, which will be the key to our applications, described in \cref{applications}.

\subsection{The main symmetry results}

At its heart, the phenomenon described above arises from the following consequence of the closed structure on the multicategory $\FSK_{w', w}$ of $\F$-sketches.

\begin{proposition}
	\label{Mod-skew-symmetry}
	Let $w' \leq w \in \W$ be weaknesses, and let $\sk S$, $\sk T$ and $\sk C$ be $\F$-sketches. There is an
	isomorphism of $\F$-sketches:
	\[\SMod_{w', w}(\sk S, \SMod_{\wbp, \wb}(\sk T, \sk C)) \iso \SMod_{\wbp, \wb}(\sk T, \SMod_{w', w}(\sk S, \sk C))\]
\end{proposition}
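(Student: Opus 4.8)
The plan is to derive this isomorphism as an immediate instance of the twisted symmetry of a closed multicategory, \cref{closed-multicategory-skew-symmetry}, applied to the closed multicategory $\FSK_{w', w}$ of $\F$-sketches.

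First I would recall from \cref{FSK-is-closed} that $\FSK_{w', w}$ is closed, with left-closed structure given by $\SMod_{\wbp, \wb}({-}, {-})$ and right-closed structure given by $\SMod_{w', w}({-}, {-})$; that is, in the notation of \cref{closed-multicategory-skew-symmetry} instantiated at $\ct M = \FSK_{w', w}$, one has $[A, B]_\ell = \SMod_{\wbp, \wb}(A, B)$ and $[A, B]_r = \SMod_{w', w}(A, B)$ for $\F$-sketches $A, B$. Invoking \cref{closed-multicategory-skew-symmetry} with $X = \sk S$, $Y = \sk T$, and $Z = \sk C$ then yields an isomorphism $[\sk S, [\sk T, \sk C]_\ell]_r \iso [\sk T, [\sk S, \sk C]_r]_\ell$, natural in all three arguments. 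Substituting the explicit descriptions of the internal homs turns the left-hand side into $\SMod_{w', w}(\sk S, \SMod_{\wbp, \wb}(\sk T, \sk C))$ and the right-hand side into $\SMod_{\wbp, \wb}(\sk T, \SMod_{w', w}(\sk S, \sk C))$, which is precisely the claimed isomorphism. Since the isomorphism of \cref{closed-multicategory-skew-symmetry} lives in the underlying category of unary multimorphisms of $\ct M$ --- here $\FSK$ --- it is in particular an isomorphism of $\F$-sketches, as required, and it is moreover natural in $\sk S$, $\sk T$, and $\sk C$.

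There is essentially no computation to carry out; the only point meriting care --- and hence the ``main obstacle'', such as it is --- is the bookkeeping of the duality. One must confirm that the way \cref{FSK-is-closed} assigns the dual weakness pair $(\wbp, \wb)$ to the left-closed structure and the pair $(w', w)$ to the right-closed structure is exactly what makes the argument-swapping twist of \cref{closed-multicategory-skew-symmetry} land on the correctly dualised pairs on each side. This is immediate by inspection: the twist interchanges which of the two internal homs appears on the outside and which on the inside of the iterated hom, so, since the left hom always carries $(\wbp, \wb)$ and the right hom always carries $(w', w)$, the corresponding weakness pairs are automatically interchanged between the two sides of the resulting isomorphism, matching the statement. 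No further argument is needed.
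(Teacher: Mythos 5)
Your proposal is correct and is exactly the paper's argument: the isomorphism is an instance of the twisted symmetry $[X,[Y,Z]_\ell]_r \iso [Y,[X,Z]_r]_\ell$ of \cref{closed-multicategory-skew-symmetry}, applied to the closed multicategory $\FSK_{w',w}$ whose left and right internal homs are identified as $\SMod_{\wbp,\wb}$ and $\SMod_{w',w}$ by \cref{FSK-is-closed}. The bookkeeping of the dual weakness pairs that you flag is indeed the only point requiring attention, and you handle it correctly.
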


\begin{proof}
	This follows immediately from \cref{closed-multicategory-skew-symmetry}, since $\FSK_{w', w}$ is closed by \cref{FSK-is-closed}.
\end{proof}

However, there is a gap between this result and our intended applications, due to the fact that, so far, we lack an understanding of when the weighted cones in the $\F$-sketches appearing above, such as $\SMod_{w', w}(\sk S, \sk C)$, are actually weighted limit cones.
To understand the nature of limits in $\F$-categories of models, the following concept will be key.

\begin{definition}[Creation]
	Let $w' \leq w \in \W$ be weaknesses, and let $\Psi$ be a class of $\F$-weights. $\Psi$ is \emph{$(w', w)$-created} if, for each $\Psi$-limit $\F$-sketch $\sk S$ and each $\Psi$-complete $\F$-category $\fc C$, the $\F$-category $\FMod_{w', w}(\sk S, \fc C)$ is also $\Psi$-complete and the evaluation $\F$-functors $\ph(S) \colon \FMod_{w', w}(\sk S, \fc C) \to \fc C$, for each $S \in \fc S$, preserve and jointly reflect $\Psi$-limits.
\end{definition}

\begin{proposition}
	\label{realised}
	Let $w' \leq w \in \W$ be weaknesses, let $\sk S$ be a $\Psi$-limit $\F$-sketch, and let $\fc C$ be a small $\Psi$-complete $\F$-category, viewed as a $\Psi$-limit $\F$-theory. If $\Psi$ is $(w', w)$-created, then the $\F$-category $\FMod_{w', w}(\sk S, \fc C)$ is $\Psi$-complete and its associated $\Psi$-limit $\F$-sketch is $\SMod_{w', w}(\sk S, \fc C)$.
\end{proposition}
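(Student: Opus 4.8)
The plan is to reduce everything directly to the hypothesis that $\Psi$ is $(w', w)$-created, so that the proof amounts to little more than unwinding the definitions of $\SMod_{w',w}(\sk S, \fc C)$ and of the $\Psi$-limit $\F$-theory associated to a $\Psi$-complete $\F$-category.

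First I would dispatch the completeness claim. Since $\sk S$ is a $\Psi$-limit $\F$-sketch and $\fc C$ is a $\Psi$-complete $\F$-category, the definition of $(w',w)$-creation immediately gives that $\FMod_{w',w}(\sk S, \fc C)$ is $\Psi$-complete and, moreover, that each evaluation $\F$-functor $\ph(S) \colon \FMod_{w',w}(\sk S, \fc C) \to \fc C$, for $S \in \fc S$, both preserves and jointly reflects $\Psi$-limits. These two properties of the family $\{ \ph(S) \}_{S \in \fc S}$ are the only inputs I will use for the rest of the argument.

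Next I would identify the distinguished cones. Recall that viewing the $\Psi$-complete $\F$-category $\FMod_{w',w}(\sk S, \fc C)$ as a $\Psi$-limit $\F$-theory means equipping it with the class of \emph{all} its $\Psi$-weighted limit cones; so it suffices to prove that a weighted cone $\gamma \colon W \tto \FMod_{w',w}(\sk S, \fc C)(X, D{-})$ is a $\Psi$-weighted limit cone if and only if it is a distinguished cone of the $\F$-sketch $\SMod_{w',w}(\sk S, \fc C)$. By construction of $\SMod_{w',w}$, the latter condition says precisely that for every $S \in \fc S$ the composite $\ph(S) \circ \gamma$ is a distinguished cone of $\fc C$ regarded as a $\Psi$-limit $\F$-theory, \ie a $\Psi$-weighted limit cone in $\fc C$; in particular $W \in \Psi$. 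Now, if $\gamma$ is a $\Psi$-weighted limit cone in $\FMod_{w',w}(\sk S, \fc C)$, then, since each $\ph(S)$ preserves $\Psi$-limits, each $\ph(S) \circ \gamma$ is a limit cone in $\fc C$, so $\gamma$ is distinguished in $\SMod_{w',w}(\sk S, \fc C)$. Conversely, if each $\ph(S) \circ \gamma$ is a limit cone in $\fc C$, then, using that $\FMod_{w',w}(\sk S, \fc C)$ is $\Psi$-complete so the limit $\{ W, D \}$ exists, together with joint reflection of $\Psi$-limits by the $\ph(S)$, the cone $\gamma$ is a $\Psi$-weighted limit cone. Hence the two classes of cones coincide, which is exactly the assertion that $\SMod_{w',w}(\sk S, \fc C)$ is the $\Psi$-limit $\F$-theory associated to $\FMod_{w',w}(\sk S, \fc C)$.

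I do not expect a genuine obstacle in this argument: essentially all of the substance is packaged into the creation hypothesis, whose verification for the classes that matter — tight products, tight pullbacks, tight comma objects, and more generally various classes of $\F$-weights — is carried out separately. The one point that needs care is the bookkeeping around the two conventions for regarding an $\F$-category as an $\F$-sketch: it is essential that $\fc C$ is viewed here as the \emph{$\Psi$-limit} $\F$-theory (distinguished cones being the $\Psi$-weighted limit cones) rather than as the full limit sketch, since it is this choice that makes the distinguished cones of $\SMod_{w',w}(\sk S, \fc C)$ line up with the $\Psi$-weighted limit cones of $\FMod_{w',w}(\sk S, \fc C)$.
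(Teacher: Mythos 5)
Your proof is correct and follows essentially the same route as the paper: the completeness claim is immediate from the definition of $(w',w)$-creation, and the identification of the distinguished cones of $\SMod_{w',w}(\sk S, \fc C)$ with the $\Psi$-weighted limit cones of $\FMod_{w',w}(\sk S, \fc C)$ is exactly the paper's argument, with the preservation and joint-reflection halves of the creation hypothesis made explicit. Your closing remark about which sketch structure on $\fc C$ is in play is a fair point of bookkeeping, but nothing more is needed.
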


\begin{proof}
	The first claim is by definition. The second part follows from the fact that, by definition, the weighted cones in the $\F$-sketch $\SMod_{w', w}(\sk S, \fc C)$ are exactly those which are pointwise weighted cones in $\fc C$, which is to say pointwise $\Psi$-limiting cones.  Since $\Psi$ is $(w', w)$-created, these are exactly the $\Psi$-limit cones in $\FMod_{w', w}(\sk S, \fc C)$.
\end{proof}

We may consequently refine \cref{Mod-skew-symmetry} as follows.

\begin{theorem}
	\label{Mod-skew-symmetry-refined}
	Let $w' \leq w \in \W$ be weaknesses, and let $\Psi$ be a class of weights that is both $(w', w)$- and $(\wbp, \wb)$-created. Further let $\sk S$ and $\sk T$ be $\Psi$-limit $\F$-sketches, and let $\fc C$ be a $\Psi$-complete $\F$-category. There is an isomorphism of $\Psi$-complete $\F$-categories,
	\[\FMod_{w', w}(\sk S, \FMod_{\wbp, \wb}(\sk T, \fc C)) \iso \FMod_{\wbp, \wb}(\sk T, \FMod_{w', w}(\sk S, \fc C))\]
	where the objects on either side are the $\F$-functors sending the specified weighted cones to weighted limit cones.
\end{theorem}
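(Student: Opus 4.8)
The plan is to derive the statement by combining the purely formal isomorphism of \cref{Mod-skew-symmetry} — which holds for arbitrary $\F$-sketches and rests on the closedness of $\FSK_{w', w}$ (\cref{FSK-is-closed}) — with two applications of \cref{realised}, which identify the relevant $\F$-sketches as genuine $\Psi$-limit $\F$-theories. The content has all been front-loaded, so this is essentially a matter of assembly.

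Concretely, I would begin by regarding $\fc C$ as a $\Psi$-limit $\F$-theory, \ie as the $\Psi$-limit $\F$-sketch whose specified cones are exactly its $\Psi$-limit cones. Since $\sk T$ is a $\Psi$-limit $\F$-sketch and $\Psi$ is $(\wbp, \wb)$-created, \cref{realised} shows that $\FMod_{\wbp, \wb}(\sk T, \fc C)$ is $\Psi$-complete and that its associated $\Psi$-limit $\F$-sketch is $\SMod_{\wbp, \wb}(\sk T, \fc C)$; by the size conventions of \cref{sect:size} it is moreover small, so \cref{realised} may be applied to it in turn. A second application — with the weaknesses $(w', w)$, the $\Psi$-limit $\F$-sketch $\sk S$, and this $\Psi$-complete $\F$-category in the role of the theory — then shows that $\FMod_{w', w}(\sk S, \FMod_{\wbp, \wb}(\sk T, \fc C))$ is $\Psi$-complete with associated $\Psi$-limit $\F$-sketch $\SMod_{w', w}(\sk S, \SMod_{\wbp, \wb}(\sk T, \fc C))$. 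Interchanging the roles of $(\sk S, w', w)$ and $(\sk T, \wbp, \wb)$, the same reasoning shows that $\FMod_{\wbp, \wb}(\sk T, \FMod_{w', w}(\sk S, \fc C))$ is $\Psi$-complete with associated $\Psi$-limit $\F$-sketch $\SMod_{\wbp, \wb}(\sk T, \SMod_{w', w}(\sk S, \fc C))$.

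It then suffices to apply \cref{Mod-skew-symmetry} with its third sketch taken to be the $\Psi$-limit $\F$-theory $\fc C$, which yields an isomorphism of $\F$-sketches
\[\SMod_{w', w}(\sk S, \SMod_{\wbp, \wb}(\sk T, \fc C)) \iso \SMod_{\wbp, \wb}(\sk T, \SMod_{w', w}(\sk S, \fc C)).\]
An isomorphism of $\F$-sketches is an isomorphism of the underlying $\F$-categories that matches up the classes of specified cones; since the two sketches above are, by the previous step, the $\Psi$-limit $\F$-theories of the two $\Psi$-complete $\F$-categories in the statement, this is precisely an isomorphism of $\Psi$-complete $\F$-categories. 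The description of the objects on either side as the $\F$-functors carrying the specified cones to weighted limit cones is then just the unfolding of the $\FMod$ notation against the $\Psi$-limit $\F$-theory structure on the second argument.

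I do not expect a substantive obstacle here. The only delicate points are bookkeeping ones: ensuring that the intermediate $\F$-categories of models remain small, so that \cref{realised} can be iterated, and keeping track of the identification — applied twice — of each iterated $\FMod$-construction with the underlying $\F$-category of the corresponding iterated $\SMod$-sketch, which is exactly what \cref{realised} provides.
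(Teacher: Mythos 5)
Your proposal is correct and is precisely the argument the paper intends: its proof reads ``Immediate from \cref{Mod-skew-symmetry,realised}'', and your write-up simply makes explicit the two applications of \cref{realised} (to identify each iterated $\FMod$ as the underlying $\Psi$-complete $\F$-category of the corresponding iterated $\SMod$) followed by the sketch-level isomorphism of \cref{Mod-skew-symmetry}. No gap; the smallness bookkeeping you flag is handled by the conventions of \cref{sect:size}, exactly as you note.
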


\begin{proof}
	Immediate from \cref{Mod-skew-symmetry,realised}.
\end{proof}

Identifying the classes of $\F$-weights that are $(w', w)$-created is nontrivial in general and an interesting problem, which we expect to be closely connected to the main results of \cite{lack2012enhanced}. However our main applications concern the relatively simple case in which $w' = s$ and the limits are tight limits: namely, products, pullbacks and comma objects of tight morphisms. We show in \cref{created} that tight limits are $(s, w)$-created for all $w \in \W$. Therefore, we may immediately specialise \cref{Mod-skew-symmetry-refined} into the following form, which is the result that we will use in our applications.

\begin{theorem}
	\label{Mod-skew-symmetry-tight}
	Let $w \in \W$ be a weakness, and let $\Psi$ be a class of weights for tight limits. Further let $\sk S$ and $\sk T$ be $\Psi$-limit $\F$-sketches and let $\fc C$ be a $\Psi$-complete $\F$-category $\fc C$. Then there is an isomorphism of $\Psi$-complete $\F$-categories,
	\[\FMod_w(\sk S, \FMod_\wb(\sk T, \fc C)) \iso \FMod_\wb(\sk T, \FMod_w(\sk S, \fc C))\]
	where the objects on either side are the $\F$-functors sending the specified weighted cones to weighted limit cones.
\end{theorem}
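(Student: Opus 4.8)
The plan is to read this off \cref{Mod-skew-symmetry-refined}. Take $w' = s$ there. Since the involution on $\W$ fixes $s$, we have $\wbp = \overline{s} = s$, and by the convention of \cref{F-category-of-models} that $\FMod_{s, w} = \FMod_w$, the isomorphism of \cref{Mod-skew-symmetry-refined} becomes verbatim
\[\FMod_w(\sk S, \FMod_\wb(\sk T, \fc C)) \iso \FMod_\wb(\sk T, \FMod_w(\sk S, \fc C)),\]
with the objects on either side being the $\F$-functors sending the specified weighted cones to weighted limit cones, exactly as claimed. Thus the only thing to check is that the hypothesis of \cref{Mod-skew-symmetry-refined} is met: that a class $\Psi$ of weights for tight limits is both $(s, w)$- and $(s, \wb)$-created. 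Since $w, \wb \in \W$, this is an instance of the assertion that tight limits are $(s, w)$-created for every $w \in \W$ --- namely \cref{created} --- so, granting that result, the theorem follows at once.

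It remains to indicate how \cref{created} itself would be proved, which is where the content lies. Fix $w \in \W$, a $\Psi$-limit $\F$-sketch $\sk S$ with $\Psi$ a class of tight-limit weights, and a $\Psi$-complete $\F$-category $\fc C$. By \cref{F-category-of-models}, $\FMod_w(\sk S, \fc C)$ is a full sub-$\F$-category of $\FFun_w(\fc S, \fc C)$, and the latter admits tight limits computed argumentwise in $\fc C$; so joint reflection of tight limits by the evaluation $\F$-functors $\ph(S)$ is automatic, and the remaining point is that an argumentwise tight limit of $\sk S$-models is again an $\sk S$-model, i.e.\ it carries the specified cones of $\sk S$ to weighted limit cones. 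The one-dimensional half of this is the standard fact that a limit of a diagram of models is a model. The two-dimensional half --- that the comparison map induced into an argumentwise tight limit is tight exactly when the appropriate cone projections are tight --- is where one uses the characterisation of tight limits in \cref{enhanced-2-limits-as-2-limits} (cone projections jointly detecting tightness), together with the fact that, because $w' = s$, the comparison cells of a loose $w$-natural transformation at a tight morphism of $\sk S$ are $s$-cells and hence interact trivially with the universal cones.

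I expect the main obstacle to be precisely this last bookkeeping inside \cref{created}: keeping careful track of which 2-cells of an argumentwise-tight-limit cone are genuine identities rather than merely $w$-cells, and checking that the induced comparison morphisms land in the tight part exactly as dictated by joint detection of tightness. Everything else --- the reduction to \cref{Mod-skew-symmetry-refined}, the identity $\overline{s} = s$, and the pointwise description of limits in functor $\F$-categories --- is a mechanical unwinding of the definitions.
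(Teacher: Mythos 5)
Your proposal is correct and matches the paper exactly: the proof given there is precisely ``Immediate from \cref{Mod-skew-symmetry-refined,created}'', i.e.\ specialise \cref{Mod-skew-symmetry-refined} to $w' = s$ (using $\overline{s} = s$) and invoke \cref{created} for the creation hypothesis. Your sketch of \cref{created} is also in the same spirit as the paper's appendix proof, which carries out the tightness bookkeeping you anticipate via the formalism of marked lax limits over the $\F$-category of elements of the weight.
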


\begin{proof}
	Immediate from \cref{Mod-skew-symmetry-refined,created}.
\end{proof}

We briefly note a similar result concerning tensor products of $\F$-sketches.

\begin{theorem}
	\label{Mod-tensor-hom-tight}
	Let $w \in \W$ be a weakness, and let $\Psi$ be a class of weights for tight limits. Further let $\sk S$ and $\sk T$ be $\Psi$-limit $\F$-sketches and let $\fc C$ be a $\Psi$-complete $\F$-category $\fc C$. Then there are isomorphisms of $\Psi$-complete $\F$-categories,
	\begin{align*}
		\FMod_w(\sk S \otimes_w \sk T, \fc C) & \iso \FMod_w(\sk S, \FMod_w(\sk T, \fc C)) \\
		\FMod_\wb(\sk S \otimes_w \sk T, \fc C) & \iso \FMod_\wb(\sk T, \FMod_\wb(\sk S, \fc C))
	\end{align*}
	where the objects on either side are the $\F$-functors sending the specified weighted cones to weighted limit cones. In particular, when $w \in \{ s, p \}$, there is an isomorphism:
	\[\FMod_w(\sk S, \FMod_w(\sk T, \fc C)) \iso \FMod_w(\sk S \otimes_w \sk T, \fc C) \iso \FMod_w(\sk T, \FMod_w(\sk S, \fc C))\]
\end{theorem}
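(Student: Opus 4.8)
The plan is to obtain the two displayed isomorphisms as instances of the internalised tensor--hom adjunctions of \cref{closed-multicategory-tensor}, applied to the multicategory $\FSK_{s, w}$, and then to reinterpret the resulting isomorphisms of $\F$-sketches as isomorphisms of $\Psi$-complete $\F$-categories by means of \cref{realised}. By \cref{FSK-is-closed}, $\FSK_{s, w}$ is right-closed with internal hom $\SMod_{s, w}({-},{-}) = \SMod_w({-},{-})$ and left-closed with internal hom $\SMod_{\overline s, \wb}({-},{-}) = \SMod_\wb({-},{-})$ (using that $\overline s = s$); moreover it is representable with tensor product $\otimes_w$, so the tensor $\sk S \otimes_w \sk T$ exists and \cref{closed-multicategory-tensor} applies. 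Instantiating its right-closed case at $\sk S, \sk T, \sk C$ yields
\[\SMod_w(\sk S \otimes_w \sk T, \sk C) \iso \SMod_w(\sk S, \SMod_w(\sk T, \sk C)),\]
and its left-closed case yields
\[\SMod_\wb(\sk S \otimes_w \sk T, \sk C) \iso \SMod_\wb(\sk T, \SMod_\wb(\sk S, \sk C)),\]
both natural in $\sk S, \sk T, \sk C$; observe that the reversal of argument order intrinsic to the left-closed case is precisely the swap of $\sk S$ and $\sk T$ appearing in the second isomorphism of the statement.

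It remains to transport these isomorphisms along \cref{realised}. First note that $\sk S \otimes_w \sk T$ is again a $\Psi$-limit $\F$-sketch: by \cref{tensor-of-sketches} its specified weighted cones are obtained from those of $\sk S$ and of $\sk T$ by postcomposition with constant $\F$-functors, so they carry the same weights, which lie in $\Psi$. Now take $\sk C = \fc C$, regarded as a $\Psi$-limit $\F$-theory. Since tight limits are $(s, w)$- and $(s, \wb)$-created by \cref{created}, \cref{realised} shows that $\FMod_w(\sk T, \fc C)$ is $\Psi$-complete with associated $\Psi$-limit $\F$-sketch $\SMod_w(\sk T, \fc C)$; applying \cref{realised} once more (all the $\F$-categories in sight are small, so the iteration is legitimate) shows that $\FMod_w(\sk S, \FMod_w(\sk T, \fc C))$ is $\Psi$-complete with associated $\Psi$-limit $\F$-sketch $\SMod_w(\sk S, \SMod_w(\sk T, \fc C))$, and that $\FMod_w(\sk S \otimes_w \sk T, \fc C)$ has associated $\Psi$-limit $\F$-sketch $\SMod_w(\sk S \otimes_w \sk T, \fc C)$. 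As an isomorphism of $\F$-sketches whose specified cones are exactly the $\Psi$-limit cones is the same thing as an isomorphism of $\Psi$-complete $\F$-categories preserving $\Psi$-limits, the first $\SMod$-isomorphism above descends to the first displayed isomorphism of the theorem, and the second descends identically, with $\FMod_\wb$ replacing $\FMod_w$ throughout.

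For the final ``in particular'' claim, note that $\wb = w$ when $w \in \{ s, p \}$, since the involution on $\W$ fixes $s$ and $p$; hence the two displayed isomorphisms become $\FMod_w(\sk S \otimes_w \sk T, \fc C) \iso \FMod_w(\sk S, \FMod_w(\sk T, \fc C))$ and $\FMod_w(\sk S \otimes_w \sk T, \fc C) \iso \FMod_w(\sk T, \FMod_w(\sk S, \fc C))$, and splicing them along their common left-hand side gives the three-way isomorphism. (Equivalently, $\FSK_{s, w}$ is symmetric for $w \leq p$ by \cref{multicategory-symmetry}, so its left- and right-hand internal homs coincide.) The one place requiring genuine care is the bookkeeping in the first step: pairing the left-closed structure with $\SMod_\wb$ and the right-closed structure with $\SMod_w$, and tracking the attendant reversal of arguments. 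Beyond this, the argument is a direct assembly of \cref{closed-multicategory-tensor}, \cref{realised}, and \cref{created}, and I do not anticipate any real obstacle.
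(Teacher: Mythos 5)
Your proposal is correct and follows essentially the same route as the paper: both derive the two $\SMod$-isomorphisms from the internalised tensor--hom adjunctions of \cref{closed-multicategory-tensor} applied to the closed representable multicategory $\FSK_{s,w}$, observe that $\sk S \otimes_w \sk T$ remains a $\Psi$-limit $\F$-sketch, and then pass to $\Psi$-complete $\F$-categories via \cref{created} and \cref{realised}. Your bookkeeping of which closed structure (left versus right) pairs with $\SMod_\wb$ versus $\SMod_w$, and the resulting swap of $\sk S$ and $\sk T$, matches the paper exactly.
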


\begin{proof}
	By \cref{closed-multicategory-tensor}, we have isomorphisms of $\F$-sketches as follows.
	\begin{align*}
		\SMod_w(\sk S \otimes_w \sk T, \fc C) & \iso \SMod_w(\sk S, \SMod_w(\sk T, \fc C)) \\
		\SMod_\wb(\sk S \otimes_w \sk T, \fc C) & \iso \SMod_\wb(\sk T, \SMod_\wb(\sk S, \fc C))
	\end{align*}
	Observe that $\sk S \otimes_w \sk T$ is also a $\Psi$-limit $\F$-sketch, since the weights for $\sk S \otimes_w \sk T$ are specified by the weights for $\sk S$ and $\sk T$. Since $\Psi$-limits are $(s, w)$-created by \cref{created}, by \cref{realised} the above isomorphisms become exactly the claimed isomorphisms.
\end{proof}

\subsection{Applications}
\label{applications}

By instantiating \cref{Mod-skew-symmetry-tight} with different $\F$-sketches and weaknesses, we obtain various symmetries of structure, both known and new. In each case, we take $\fc C$ to be the chordate limit $\F$-theory $\chord\Cat$ of categories, functors, and natural transformations.

\begin{example}
	\label{examples-of-symmetries}
	\begin{enumerate}
		\item Taking $\sk S$ to be the $\F$-sketch for pseudomonoids, $\sk T$ to be the \mbox{$\F$-sketch} for pseudocategories, and $w = p$, we recover the observation in \cref{pseudomonoid-examples,pseudocategory-examples} that a monoidal double category is equivalently a pseudomonoid in $\FDblCat s p$ and a pseudocategory in $\FMonCat s p$. By taking $\sk S$ to be the $\F$-sketch for \emph{left-skew} monoids, we obtain two new equivalent definitions of left-skew monoidal double categories.
		\item Taking $\sk S$ to be the $\F$-sketch for pseudomonoids, $\sk T$ to be the $\F$-sketch for pseudocategories, and $w = c$, we observe that a colax monoidal double category is equivalently a pseudomonoid in $\FDblCat s c$ and a pseudocategory in $\FMonCat s l$.
		\item Taking $\sk S$ to be the $\F$-sketch for cartesian pseudomonoids, $\sk T$ to be the $\F$-sketch for pseudocategories, and $w = p$, we observe that a cartesian double category is equivalently a cartesian pseudomonoid in $\FDblCat s p$ and a pseudocategory in $\FCartCat s p$. Taking $w = l$, we observe that a precartesian double category is equivalently a cartesian pseudomonoid in $\FDblCat s l$ and a pseudocategory in $\FCartCat s c$.
		\item Taking $\sk S$ to be the $\F$-sketch for fibrations, $\sk T$ to be the $\F$-sketch for pseudomonoids, and $w = p$, we recover the observation in \cref{pseudocategory-examples,fibration-examples} that a monoidal fibration is equivalently a pseudomonoid in $\FFib s p$ and a fibration in $\FMonCat s p$. By taking $\sk S$ to be the $\F$-sketch for \emph{left-skew} pseudomonoids, we obtain two new equivalent definitions of left-skew monoidal fibrations. Similar comments apply taking $\sk S$ to be the $\F$-sketch for discrete fibrations.
		\item Taking $\sk S$ to be the $\F$-sketch for fibrations, $\sk T$ to be the $\F$-sketch for pseudocategories, and $w = p$, we recover the observation in \cref{pseudocategory-examples,fibration-examples} that a double fibration is equivalently a pseudocategory in $\FFib s p$ and a fibration in $\FDblCat s p$. Similar comments apply taking $\sk S$ to be the $\F$-sketch for discrete fibrations.
		\item Taking $\sk S = \sk T$ to be the $\F$-sketch for pseudomonoids, and $w = l$, we recover the observation in \cref{pseudomonoid-examples} that a duoidal category is equivalently a pseudomonoid in $\FMonCat s l$ and a pseudomonoid in $\FMonCat s c$.
		\item Taking $\fc S$ to be an $\F$-category, and $\sk T$ to be the sketch for pseudomonoids, we observe that $\F$-functors into $\FMonCat{s}{w}$ are equivalently pseudomonoids in $\FFun_\wb(\fc S, \chord\Cat)$, recovering \cite[Proposition~4.4]{moeller2020monoidal}.
		\item \label{h-and-v-intercategories} Taking $\sk S = \sk T$ to be the $\F$-sketch for pseudocategories, and $w = l$, we recover \cite[Theorem~4.1(b \& c)]{grandis2015intercategories}, which states that horizontal intercategories are equivalent to vertical intercategories. We may also characterise the morphisms that arise.
		\begin{itemize}
			\item Loose lax natural transformations between models of $\sk S$ in $\FMod_c(\sk S, \chord\Cat)$ are equivalent to loose colax natural transformations between models of $\sk S$ in $\FMod_l(\sk S, \chord\Cat)$, which are the \emph{colax--lax} morphisms of intercategories~\cite[1241]{grandis2015intercategories}. We thus recover \cite[Theorem~6.2]{grandis2015intercategories}.
			\item Loose colax natural transformations between models of $\sk S$ in $\FMod_c(\sk S, \chord\Cat)$ are the \emph{colax--colax} morphisms \ibid.
			\item Loose lax natural transformations between models of $\sk S$ in $\FMod_l(\sk S, \chord\Cat)$ are the \emph{lax--lax} morphisms \ibid.
			\qedhere
		\end{itemize}
	\end{enumerate}
\end{example}

The final example above provides a conceptual explanation for the observation in \cite[\S7]{grandis2015intercategories} that there are three natural notions of morphism for intercategories. A similar phenomenon occurs for any structure obtained by considering $\sk S = \sk T$.


Finally, we give a couple of examples involving tensor products of $\F$-sketches (\cref{Mod-tensor-hom-tight}), leaving further combinations to the reader.

\begin{example}
	\begin{enumerate}
		\item Taking $\sk S$ to be the $\F$-sketch for pseudomonoids and $\sk T$ to be the $\F$-sketch for pseudocategories, we obtain an $\F$-sketch $\sk S \otimes_p \sk T$ for \emph{monoidal pseudocategories}, whose models in $\chord{\Cat}$ are monoidal double categories.
		\item Taking $\sk S = \sk T$ to be the $\F$-sketch for pseudocategories, and $w = l$, we obtain \mbox{$\F$-sketches} $\sk S \otimes_l \sk S$ and $\sk S \otimes_c \sk S$ for \emph{double pseudocategory objects} in the sense of \cite{grandis2015intercategories}. In both cases, the models in $\chord{\Cat}$ are intercategories, which, in conjunction with \cref{h-and-v-intercategories}, recovers \cite[Theorem~4.1]{grandis2015intercategories}. We may also characterise the morphisms that arise, with respect to \cref{h-and-v-intercategories}.
		\begin{itemize}
			\item Loose lax natural transformations between models of $\sk S \otimes_l \sk S$ or $\sk S \otimes_c \sk S$ are lax--lax morphisms.
			\item Loose colax natural transformations between models of $\sk S \otimes_l \sk S$ or $\sk S \otimes_c \sk S$ are colax--colax morphisms.
			\qedhere
		\end{itemize}
	\end{enumerate}
\end{example}

\begin{remark}
	For pseudo double categories with companions and conjoints, it is sometimes the case that structure on the double category induces structure on the underlying bicategory. For instance, a (braided/symmetric) monoidal double category with companions and conjoints has an associated (symmetric/braided) monoidal bicategory~\cites[Theorem~5.1]{shulman2010constructing}[Theorem~5.12]{hansen2019constructing}. We expect that the theory of $\F$-sketches may provide a way to generally prove such results.
\end{remark}

\section{From 2-sketches to enhanced 2-sketches}
\label{2-sketches}

In this section, we investigate the relationship between enhanced 2-sketches (\ie{} \mbox{$\F$-sketches}) and 2-sketches (\ie $\CAT$-sketches). Our first main result, \cref{free-F-sketch}, shows that it is possible to freely construct an $\F$-sketch from a 2-sketch, relative to a fixed class of $\F$-categorical weights. In particular, this construction recovers our three guiding examples of tight limit $\F$-sketches in \cref{examples-of-F-sketches} -- those for pseudomonoids, pseudocategories, and fibrations -- from their underlying 2-sketches.

We then specialise in \cref{algebraic-2-theories} to the algebraic setting, in which the defining class of limits is the powers. In particular, we revisit earlier work on algebraic 2-theories by \textcite{lack2007lawvere} and explain how it fits into the $\F$-categorical setting. Then, in \cref{flexible-limit-2-sketches}, we turn to the more general but subtler setting of flexible limit 2-sketches, showing that each cloven flexible limit 2-sketch naturally induces an $\F$-sketch with the same $\F$-category of models; this realises a proposal in \cite[\S9]{bourke2021accessible}.

\subsection{Universal enhancements of 2-sketches}
\label{(co)free-F-sketches}

\begin{definition}
	\label{2-sketch}
	A \emph{2-sketch} is a $\CAT$-sketch in the sense of \cref{sketches}. Denote by $\twoSK$ the 2-category of 2-sketches.
\end{definition}

Via change of base for enriched sketches (\cref{change-of-base}), every enhanced 2-sketch $\sk S$ has an underlying 2-sketch $\sk S_\lambda$, and this defines a 2-functor $\ph_\lambda \colon \FSK \to \twoSK$. Explicitly, $\ph_\lambda \colon \FSK \to \twoSK$ sends each $\F$-sketch $\sk S$ to its underlying 2-category $\fc S_\lambda$ of loose morphisms, together with the loose parts of the weights, diagrams, and weighted cones $\gamma_\lambda \colon W_\lambda \tto \fc S_\lambda(X,D_\lambda{-})$ appearing in $\sk S$ (see \cref{enhanced-2-limits-as-2-limits}).
Furthermore, for a fixed class of $\F$-weights $\Psi$, this forgetful 2-functor restricts by \cref{restricted-change-of-base} to a 2-functor $\ph_\lambda \colon \Psi\SK \to \Psi_\lambda\SK$, where $\Psi_\lambda$ comprises the loose parts of the $\F$-weights in $\Psi$.

For instance, if $\Psi$ is the class of tight limits, then $\ph_\lambda$ sends the tight limit $\F$-sketches for pseudomonoids, pseudocategories, and fibrations respectively to the 2-sketches for pseudomonoids, pseudocategories, and fibrations. This process forgets which morphisms are tight. We now describe a general construction which, in the above cases, recovers the tight morphisms from the underlying 2-sketches.

\begin{theorem}
	\label{free-F-sketch}
	Let $\Psi$ be a class of $\F$-weights. The forgetful functor ${\ph_\lambda \colon \Psi\SK_0 \to \Psi_\lambda\SK_0}$ admits a coreflective left adjoint $\ph^\Psi \colon \Psi_\lambda\SK_0 \to \Psi\SK_0$.
\end{theorem}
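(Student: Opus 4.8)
The plan is to construct $\ph^\Psi$ by hand and then verify its universal property directly. Write a $\Psi_\lambda$-limit 2-sketch $\sk A$ as an underlying 2-category $\ct A$ together with specified cones $\gamma_i \colon W_i \tto \ct A(X_i, D_i{-})$ over diagrams $D_i \colon \ct J_i \to \ct A$, where each $W_i$ is the loose part of an $\F$-weight $\hat W_i \colon \fc J_i \to \fc F$ in $\Psi$, so that $(\fc J_i)_\lambda = \ct J_i$. I would define $\fc A^\Psi$ to be the $\F$-category whose loose 2-category is $\ct A$ itself and whose tight part is the wide, locally full sub-2-category of $\ct A$ generated by the morphisms $D_i(j)$ for $j$ a tight 1-cell of $\fc J_i$, together with the cone projections $(\gamma_i)_{J, Y} \colon X_i \to D_i(J)$ for $J \in \fc J_i$ and $Y \in \hat W_i(J)_\tau$. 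This is a legitimate $\F$-category --- the generated 1-cells contain the identities and are closed under composition, and taking all 2-cells between such 1-cells makes the inclusion locally full --- and by construction each $D_i$ lifts to an $\F$-functor $\fc J_i \to \fc A^\Psi$ and each $\gamma_i$ to an $\F$-natural cone $\hat W_i \tto \fc A^\Psi(X_i, D_i{-})$, its components being $\F$-morphisms because cone projections at tight objects are tight and local fullness handles the 2-cells, with $\F$-naturality inherited from $\gamma_i$ (cf.\ \cref{enhanced-2-limits-as-2-limits}). Equipping $\fc A^\Psi$ with these cones yields a $\Psi$-limit $\F$-sketch $\ph^\Psi(\sk A) \defeq \sk A^\Psi$.

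The key structural point is that $(\sk A^\Psi)_\lambda = \sk A$ on the nose: passing to loose parts discards the tight structure and the $\tau$-parts of the weights, returning precisely $\ct A$ equipped with the original cones $\gamma_i$. Hence, once the adjunction $\ph^\Psi \dashv \ph_\lambda$ is established, its unit will be the identity, so $\ph^\Psi$ is fully faithful and the adjunction is coreflective; no separate argument for coreflectivity is needed. It therefore remains only to exhibit, for each $\Psi$-limit $\F$-sketch $\sk S$, a bijection $\Psi\SK_0(\sk A^\Psi, \sk S) \iso \Psi_\lambda\SK_0(\sk A, \sk S_\lambda)$, natural in $\sk A$ and $\sk S$, and the candidate is $F \mapsto F_\lambda$. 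Injectivity is immediate, since an $\F$-functor is determined by its underlying 2-functor. For surjectivity, take a morphism of 2-sketches $G \colon \sk A \to \sk S_\lambda$, that is, a cone-preserving 2-functor $G \colon \ct A \to \fc S_\lambda$; it suffices to show that $G$ sends tight morphisms of $\sk A^\Psi$ to tight morphisms of $\fc S$, for then $G$ lifts uniquely to an $\F$-functor $\sk A^\Psi \to \sk S$, and this lift automatically preserves the specified $\F$-cones, since it shares weight, diagram, apex and underlying 2-cone with a specified $\F$-cone of $\sk S$, data which pin an $\F$-cone down completely.

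Since the tight morphisms of $\sk A^\Psi$ are generated by the two families above, and the tight morphisms of $\fc S$ form a sub-2-category, it is enough to treat the generators, and here cone preservation does the work. Because $G$ preserves $\gamma_i$, its image $G \circ \gamma_i$ is a specified cone of the 2-sketch $\sk S_\lambda$, hence --- by the definition of $\ph_\lambda$ --- the loose part of a specified $\F$-cone $\delta_i \colon \hat W_i \tto \fc S(GX_i, E_i{-})$ of $\sk S$, where $E_i$ is an $\F$-functor with loose part $G \circ D_i$. Thus $G(D_i(j)) = E_i(j)$ is tight in $\fc S$ for each tight $j$ of $\fc J_i$, and $G((\gamma_i)_{J,Y}) = (\delta_i)_{J,Y}$ is tight for each $Y \in \hat W_i(J)_\tau$ because $\delta_i$ is an $\F$-cone (\cref{tight-limits,enhanced-2-limits-as-2-limits}). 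All generators are therefore sent to tight morphisms; naturality of the bijection, and functoriality of $\ph^\Psi$, then follow formally. One can also verify a posteriori that the tight morphisms of $\sk A^\Psi$ are exactly those carried to a tight morphism by \emph{every} cone-preserving 2-functor into \emph{every} $\Psi$-limit $\F$-sketch, giving a choice-free description of the construction.

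The main obstacle is the one step where the argument uses slightly more than $\ph_\lambda$ records: recognising $G \circ \gamma_i$ as the loose part of an $\F$-cone of $\sk S$ with the \emph{same} $\F$-weight $\hat W_i$, and hence the same domain $\F$-category $\fc J_i$, so that ``$j$ tight in $\fc J_i$'' makes sense on the target side. A priori a 2-weight can be the loose part of several distinct $\F$-weights, and the forgetful functor retains only the loose part. This is vacuous whenever the assignment $\Psi \to \Psi_\lambda$ is injective --- in particular for the classes of tight limits $\Psi = \{\chord W\}$ that govern the guiding examples, where the construction makes tight precisely the morphisms occurring in the cone diagrams together with all cone projections, thereby recovering the $\F$-sketches for pseudomonoids, pseudocategories and fibrations of \cref{examples-of-F-sketches} from their underlying 2-sketches --- and in general it can be handled by fixing once and for all a section of $\Psi \to \Psi_\lambda$ and carrying out the construction relative to it. Everything else is a routine unwinding of definitions.
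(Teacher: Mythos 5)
Your construction of $\sk A^\Psi$ --- loose part unchanged, tight morphisms generated by the diagram morphisms at tight $1$-cells of $\fc J_i$ together with the cone projections at objects of $\hat W_i(J)_\tau$ --- is exactly the paper's, and your verification of the adjunction via the bijection $F \mapsto F_\lambda$ (with identity unit, hence coreflective) is just the hom-set form of the paper's counit-and-triangle-identities argument, using cone preservation in the same way to see that the generators land on tight morphisms. The one place you go beyond the paper is in flagging that a $2$-weight may be the loose part of several $\F$-weights in $\Psi$, so that ``the same $\F$-weight $\hat W_i$'' on the target side needs either injectivity of $\Psi \to \Psi_\lambda$ or a chosen section; the paper silently quantifies over all $W \in \Psi$ with the given loose part, which is harmless in the intended examples (where $\Psi = \chord{\Psi'}$ for a class $\Psi'$ of $\CAT$-weights), and your remark is a fair, if minor, refinement rather than a divergence.
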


\begin{proof}
	Let $\sk S$ be a $\Psi_\lambda$-limit 2-sketch. We construct a $\Psi$-limit $\F$-sketch $\sk S^\Psi$ as follows. The underlying 2-category of loose morphisms $(\fc S^\Psi)_\lambda$ is given by $\tc S$. The tight morphisms are generated under identities and composition by the following.

	For each weight $W \colon \fc J \to \fc F$ in $\Psi$, diagram $D \colon \fc J_\lambda \to \tc S$, and weighted cone ${\gamma \colon W_\lambda \tto \tc S(X, D{-})}$ in $\sk S$,
	\begin{enumerate}
		\item if a morphism $f \colon A \lto B \in \fc J$ is tight, the morphism $D(f) \colon D(A) \lto D(B)$ is tight in $\fc S^\Psi$;
		\item given an object $J \in \fc J$ and an object $Y \in W_\tau(J)$, the morphism $\gamma_{J, Y} \colon X \lto D(J)$ is tight in $\fc S^\Psi$.
	\end{enumerate}
	With this choice of tight morphisms for $\fc S^\Psi$, $D$ becomes an $\F$-functor $\fc J \to \fc S^\Psi$ and $\gamma$ becomes a $W$-weighted cone. This determines the choice of cones for $\sk S^\Psi$. Note that we have $(\sk S^\Psi)_\lambda = \sk S$ by definition.

	Let $S \colon \sk S \to \sk S'$ be a morphism of $\Psi_\lambda$-limit 2-sketches, defining a 2-functor ${S^\Psi \colon (\fc S^\Psi)_\lambda \to ((\fc S')^\Psi)_\lambda}$. We must verify that the generating classes of tight morphisms (1) \& (2) in $\sk S^\Psi$ are sent to tight morphisms in $(\sk S')^\Psi$. As above, let $W \colon \fc J \to \fc F$ in $\Psi$, $D \colon \fc J_\lambda \to \tc S$, and $\gamma \colon W_\lambda \tto \tc S(X, D{-})$ in $\sk S$. Since $S$ is a morphism of sketches, $S_{X, D{-}} \c \gamma \colon W_\lambda \tto \ct S'(SX, SD{-})$ is a cone in $\sk S'$. This implies that
	\begin{enumerate}
		\item for each tight morphism $f \colon A \to B$ in $\fc J$, the morphism $SD(f) \colon SD(A) \lto SD(B)$ is tight in $(\sk S')^\Psi$;
		\item for each $J \in \fc J$ and $Y \in W_\tau(J)$, the morphism $S(\gamma_{J, Y}) \colon SX \lto SD(J)$ is tight in $(\sk S')^\Psi$.
	\end{enumerate}

	It remains to show that the functor $\ph^\Psi \colon \Psi_\lambda\SK_0 \to \Psi\SK_0$ defines a left adjoint to the forgetful functor. Let $\sk T$ be a $\Psi$-limit $\F$-sketch. The identity 2-functor on $\fc T_\lambda$ defines an $\F$-functor $(\fc T_\lambda)^\Psi \to \fc T$, since the $\CAT$-enriched cones in $\sk T_\lambda$ were induced by $\F$-enriched cones in $\sk T$, and so the morphisms determined to be tight in $(\fc T_\lambda)^\Psi$ by the procedure above must have been tight in $\fc T$. For the same reason, this $\F$-functor preserves the $\F$-enriched cones in $(\fc T_\lambda)^\Psi$, and consequently defines a morphism of sketches $\varepsilon_{\sk T} \colon (\sk T_\lambda)^\Psi \to \sk T$. The triangle identities follow from the fact that $\varepsilon_{\sk T}$ is induced by the identity functor.
\end{proof}

\begin{remark}[Models for a free $\F$-sketch]
	\label{models-of-free-F-sketches}
	Given the explicit construction of the $\F$-sketch $\sk S^\Psi$ from a 2-sketch $\sk S$, we may give an explicit description of its $\F$-categories of models. To this end, let us refer to the generating classes of tight morphisms (1) and (2) in $\sk S^\Psi$ described in the proof of \cref{free-F-sketch} as \emph{tight diagram morphisms} and \emph{tight cone projections} respectively.

	Given a $\Psi$-limit $\F$-sketch $\sk C$ and $w' \leq w \in \W$ a pair of weaknesses, the $\F$-category $\FMod_{w', w}(\sk S^\Psi, \sk C)$ is described as follows. Its objects are models of $\sk S$ in $\sk C_\lambda$ that send tight diagram morphisms and tight cone projections to tight morphisms in $\fc C$. Given a pair of such models $M, N \colon \sk S \to \sk C_\lambda$, a loose morphism $M \lto N$ is a $w$-natural transformation $\phi \colon M \tto N$ whose components at tight diagram morphisms and tight cone projections are $w'$-natural. A morphism is tight when it is $w'$-natural and has tight components. Finally, its 2-cells are simply modifications.
\end{remark}

\begin{example}
	\label{examples-of-cofree-F-sketches}
	Let $\Psi$ be a class of $\CAT$-weights and let $\chord\Psi$ be the corresponding class of tight $\F$-weights (\cref{tight-limits}). Then $(\chord\Psi)_\lambda = \Psi$ and so we obtain an adjunction $\ph^{\chord\Psi} \colon \Psi\SK_0 \rightleftarrows \chord\Psi\SK_0 \cocolon \ph_\lambda$.
	\begin{itemize}
		\item The free tight product $\F$-sketch on the product 2-sketch for pseudomonoids is precisely the $\F$-sketch for pseudomonoids (\cref{sketch-pseudomonoid}).
		\item The free tight pullback $\F$-sketch on the pullback 2-sketch for pseudocategories is precisely the $\F$-sketch for pseudocategories (\cref{sketch-pseudocategory}).
		\item The free tight comma object $\F$-sketch on the comma object 2-sketch for fibrations is precisely the $\F$-sketch for fibrations (\cref{sketch-fibration}).
		\qedhere
	\end{itemize}
\end{example}

\begin{remark}
	The choice of tight morphisms in \cref{free-F-sketch} is reminiscent of \citeauthor{bastiani1974multiple}'s~\cite{bastiani1974multiple} approach to lax morphisms of sketches mentioned in \cref{lax-morphisms-via-2-categories}, since the morphisms in $\sk S^\Psi$ over which limits are taken (\ie the tight diagram morphisms) are required to be tight. (However, in general, these are not the only morphisms required to be tight, as the class of weights $\Psi$ also determines tightness.)
\end{remark}

To obtain a right adjoint to the forgetful 2-functor $\ph_\lambda \colon \Psi\SK \to \Psi_\lambda\SK$, we must impose the mild additional assumption that the class $\Psi$ of weights contains the induced class of tight weights.

\begin{theorem}
	\label{cofree-F-sketch}
	Let $\Psi$ be a class of $\F$-weights for which $\chord{(\Psi_\lambda)} \subseteq \Psi$. The forgetful 2-functor $\ph_\lambda \colon \Psi\SK \to \Psi_\lambda\SK$ admits a reflective right adjoint $\chord\ph \colon \Psi_\lambda\SK \to \Psi\SK$.
\end{theorem}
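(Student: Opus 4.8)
The plan is to dualise the construction of the coreflective left adjoint in \cref{free-F-sketch}, replacing the minimal choice of tight morphisms by the \emph{chordate} $\F$-category construction. Recall from \cref{chord-and-inchord} that the underlying-2-category 2-functor $\ph_\lambda \colon \FCAT \to \twoCAT$ has a right 2-adjoint $\chord\ph \colon \twoCAT \to \FCAT$, which is moreover a reflection: since any 2-functor into a chordate $\F$-category automatically preserves tightness, one has $\FCAT(\fc A, \chord{\tc K}) \iso \twoCAT(\fc A_\lambda, \tc K)$ 2-naturally, with counit the identity 2-functor $(\chord{\tc K})_\lambda \to \tc K$. I would lift this reflection $\ph_\lambda \dashv \chord\ph$ along the forgetful 2-functors to the 2-categories of sketches.

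Given a $\Psi_\lambda$-limit 2-sketch $\sk S = (\tc S, \Gamma)$, define an $\F$-sketch $\chord{\sk S}$ with underlying $\F$-category $\chord{\tc S}$ and with cone class consisting of \emph{all} $\F$-weighted cones $(V, E, Y, \delta)$ in $\chord{\tc S}$ such that $V \in \Psi$ and whose underlying $\CAT$-weighted cone $(V_\lambda, E_\lambda, Y, \delta_\lambda)$ belongs to $\Gamma$; note that, since $\chord{\tc S}$ is chordate, such an $\F$-cone is determined by its loose part together with the lifted weight $V$. By construction $\chord{\sk S}$ is a $\Psi$-limit $\F$-sketch, and it is routine that $\sk S \mapsto \chord{\sk S}$ is 2-functorial: a 2-sketch morphism $G \colon \sk S \to \sk S'$ induces $\chord G \colon \chord{\tc S} \to \chord{\tc S'}$, which preserves cones because $\chord G$ acts as $G$ on underlying $\CAT$-cones, and $\chord\ph$ is 2-functorial on 2-cells. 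Now fix a $\Psi$-limit $\F$-sketch $\sk T$. An $\F$-sketch morphism $\sk T \to \chord{\sk S}$ is an $\F$-functor $F \colon \fc T \to \chord{\tc S}$ preserving cones; by the reflection above, $F$ corresponds bijectively to a 2-functor $F_\lambda \colon \fc T_\lambda \to \tc S$, and, unwinding the cone class of $\chord{\sk S}$, $F$ sends a cone $(W, D, X, \gamma)$ of $\sk T$ into that class if and only if the $\CAT$-cone $(W_\lambda, F_\lambda D_\lambda, F_\lambda X, F_\lambda \gamma_\lambda)$ belongs to $\Gamma$. Since the cones of $\sk T_\lambda$ are precisely the loose parts of the cones of $\sk T$, this holds for every cone of $\sk T$ exactly when $F_\lambda$ is a 2-sketch morphism $\sk T_\lambda \to \sk S$. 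This yields the bijection $\Psi\SK(\sk T, \chord{\sk S}) \iso \Psi_\lambda\SK(\sk T_\lambda, \sk S)$, whose 2-naturality in $\F$-natural transformations follows from that of the reflection at the $\FCAT$, $\twoCAT$ level.

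To see that $\chord\ph$ is fully faithful, it suffices to check that the counit at $\sk S$, namely $(\chord{\sk S})_\lambda \to \sk S$, is an isomorphism of 2-sketches. Its underlying 2-functor is the identity on $\tc S$, and the cones of $(\chord{\sk S})_\lambda$ are the loose parts of the cones of $\chord{\sk S}$, which all lie in $\Gamma$ by construction; conversely, any cone $(W, D, X, \gamma) \in \Gamma$ has $W \in \Psi_\lambda$, so the hypothesis $\chord{(\Psi_\lambda)} \subseteq \Psi$ provides the tight lift $\chord W \in \Psi$, whence $(\chord W, \chord D, X, \chord\gamma)$ is a cone of $\chord{\sk S}$ with loose part $(W, D, X, \gamma)$. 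Hence the two cone classes coincide, the counit is invertible, and the adjunction is a reflection. The one genuinely delicate point --- and the place where the hypothesis $\chord{(\Psi_\lambda)} \subseteq \Psi$ is used --- is this choice of cone class: it must contain cones for \emph{all} weights $V \in \Psi$ lying over a given cone of $\sk S$, not merely the chordate lifts $\chord W$ of the weights of $\sk S$, since an arbitrary $\Psi$-limit $\F$-sketch $\sk T$ carries cones whose weights are non-tight members of $\Psi$, and the bijection of the previous paragraph would break down were the cone class taken any smaller.
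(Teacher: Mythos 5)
Your proof is correct, but it takes a more explicit route than the paper's and differs in one substantive respect. The paper obtains the adjunction abstractly: by change of base (\cref{change-of-base}), the base-level reflection $\ph_\lambda \dashv \chord\ph$ between $\F$ and $\CAT$ induces a reflective 2-adjunction $\FSK \rightleftarrows \twoSK$ whose right adjoint equips $\chord{\tc S}$ with only the \emph{chordate lifts} $(\chord{W}, \chord{D}, X, \chord{\gamma})$ of the cones of $\sk S$, and the hypothesis $\chord{(\Psi_\lambda)} \subseteq \Psi$ is used only to restrict that adjunction to $\Psi\SK \rightleftarrows \Psi_\lambda\SK$. You instead build the right adjoint by hand and verify the hom-isomorphism and the invertibility of the counit directly. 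The real divergence is your cone class for $\chord{\sk S}$: all $\Psi$-weighted cones of $\chord{\tc S}$ whose loose part lies in $\Gamma$, which strictly contains the class of chordate lifts whenever $\Psi$ contains non-chordate weights. Your closing observation is well taken: since a sketch morphism must, per \cref{sketch}, send each cone to a cone of the target \emph{with the same weight}, a $\Psi$-limit $\F$-sketch $\sk T$ possessing a cone whose weight $W \in \Psi$ is not chordate admits no sketch morphism into an $\F$-sketch all of whose cones carry chordate weights, so the smaller class would not satisfy the required universal property against all of $\Psi\SK$. The two constructions coincide when $\Psi = \chord{(\Psi_\lambda)}$, as for the classes of tight weights in \cref{examples-of-cofree-F-sketches}, but for general $\Psi$ (\eg the class of all $\F$-weights in \cref{chordate-F-sketch}) your enlarged cone class is the one that actually delivers the stated adjunction. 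What your approach gives up is the economy of deriving everything from \cref{change-of-base}; what it buys is a self-contained argument that is verifiably correct at this level of generality.
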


\begin{proof}
	By change of base for enriched sketches (\cref{change-of-base}), the reflective 2-adjunction $\ph_\lambda \colon \FCAT \rightleftarrows \twoCAT \cocolon \chord\ph$ lifts to a reflective 2-adjunction $\ph_\lambda \colon \FSK \rightleftarrows \twoSK \cocolon \chord\ph$ between the 2-category of $\F$-sketches and the 2-category of 2-sketches. Explicitly, the right adjoint $\chord\ph \colon \twoCAT \to \FCAT$ assigns to each $\CAT$-weighted cone $\gamma \colon W \tto {\tc S}(X, D{-})$ in a 2-sketch $\sk S$, the $\F$-weighted cone $\chord{\gamma} \colon \chord W \tto \chord{\tc S}(X, \chord{D}{-})$ with the same components as~$\gamma$.

	Consequently, by \cref{restricted-change-of-base}, the left adjoint restricts to $\Psi\SK \to \Psi_\lambda\SK$, and the right adjoint restricts to $\Psi_\lambda\SK \to \chord{(\Psi_\lambda)}\SK$. Since $\chord{(\Psi_\lambda)} \subseteq \Psi$ by assumption, $\chord{(\Psi_\lambda)}\SK$ is a full sub-2-category of $\Psi\SK$, and so the 2-adjunction above restricts as required.
\end{proof}

\begin{remark}[Models for a cofree $\F$-sketch]
	\label{models-of-cofree-F-sketches}
	Given a $\Psi$-limit $\F$-sketch $\sk C$ and $w' \leq w \in \W$ a pair of weaknesses, the $\F$-category $\FMod_{w', w}(\chord{\sk S}, \sk C)$ is described as follows. Its objects are models of $\sk S$ in $\sk C_\lambda$ that factor through $\fc C_\tau$. Given a pair of such models $M, N \colon \sk S \to \sk C_\lambda$, a loose morphism $M \lto N$ is a $w'$-natural transformation $\phi \colon M \tto N$. A morphism is tight when it has tight components. Finally, its 2-cells are simply modifications.
\end{remark}

\begin{example}
	Taking $\Psi = \varnothing$, we obtain from \cref{free-F-sketch,cofree-F-sketch} the adjoint triple between $\FCAT_0$ and $\twoCAT_0$ described in \cref{chord-and-inchord}.
\end{example}

\begin{example}
	\label{chordate-F-sketch}
	Taking $\Psi$ to be the class of all $\F$-weights, we have that $\Psi_\lambda$ is the class of all $\CAT$-weights, since for every $\CAT$-weight $W \colon \tc J \to \CAT$, we have $W = (\chord W)_\lambda$. Consequently, we obtain from \cref{free-F-sketch,cofree-F-sketch} the adjoint triple below.
	\[\begin{tikzcd}
		{\FSK_0} && {\twoSK_0}
		\arrow[""{name=0, anchor=center, inner sep=0}, "{(-)_\lambda}"{description}, from=1-1, to=1-3]
		\arrow[""{name=1, anchor=center, inner sep=0}, "{\ph^\allweights}"', shift right=3, curve={height=6pt}, from=1-3, to=1-1]
		\arrow[""{name=2, anchor=center, inner sep=0}, "{\chord\ph}", shift left=3, curve={height=-6pt}, from=1-3, to=1-1]
		\arrow["\dashv"{anchor=center, rotate=-90}, draw=none, from=0, to=2]
		\arrow["\dashv"{anchor=center, rotate=-90}, draw=none, from=1, to=0]
	\end{tikzcd}\]

	Note that, given a 2-sketch $\sk S$, the underlying $\F$-category of $\sk S^\allweights$ need not be inchordate, since the left adjoint $\ph^\allweights$ typically introduces new tight morphisms.
	On the other hand, the underlying $\F$-category of $\chord{\sk S}$ is the chordate $\F$-category $\chord{\tc S}$; our notation for the right adjoint is chosen to reflect this.
\end{example}

Given 2-sketches $\sk S$ and $\sk C$, the following clarifies the relationship between models of $\sk S$ in $\sk C$ and models of the free and cofree $\F$-sketches induced by \cref{free-F-sketch,cofree-F-sketch}.

\begin{proposition}
	\label{models-for-(co)free-sketches}
	Let $\Psi$ be a class of $\F$-weights and let $\sk S$ and $\sk C$ be $\Psi_\lambda$-limit 2-sketches. For each pair $w' \leq w \in \W$ of weaknesses, there is a 2-natural isomorphism of 2-categories:
	\begin{equation}\label{models-for-cofree-sketches}
		(\FMod_{w', w}(\sk S^\Psi, \sk C^\Psi))_\lambda \iso (\FMod_{w', w}(\sk S^\Psi, \chord{\sk C}))_\lambda
	\end{equation}
	Furthermore, in the case $w = w'$, there are 2-natural isomorphisms of 2-categories:
	\begin{equation}\label{models-for-2-sketches}
		(\FMod_{w, w}(\sk S^\Psi, \sk C^\Psi))_\lambda \iso (\FMod_{w, w}(\sk S^\Psi, \chord{\sk C}))_\lambda \iso (\FMod_{w, w}(\chord{\sk S}, \chord{\sk C}))_\lambda
	\end{equation}
	in which the objects are in bijection with 2-sketch morphisms $\sk S \to \sk C$, the 1-cells are in bijection with $w$-natural transformations between underlying 2-functors, and the 2-cells are modifications.
\end{proposition}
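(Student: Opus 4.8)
The plan is to unwind the explicit descriptions of $\F$-categories of models provided by \cref{models-of-free-F-sketches,models-of-cofree-F-sketches} and to check, for each stated isomorphism, that the two sides have literally the same objects, loose 1-cells, and 2-cells. The key preliminary remark is that $\sk S^\Psi$ and $\chord{\sk S}$ have the same underlying 2-sketch, namely $\sk S$, and likewise $\sk C^\Psi$ and $\chord{\sk C}$ both have underlying 2-sketch $\sk C$; they differ only in the chosen class of tight morphisms, which is all morphisms in the chordate case and, in the free case, the class generated under identities and composition by the tight diagram morphisms and the tight cone projections. Now, the loose morphisms of a functor $\F$-category $\FFun_{w', w}(\fc A, \fc B)$ — and hence those of a full sub-$\F$-category of models — refer to the codomain only through its 2-category $\fc B_\lambda$ of loose morphisms (the tightness of morphisms in $\fc B$ is irrelevant), and its 2-cells are modifications, which again see only $\fc B_\lambda$. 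Consequently the loose 1-cells and 2-cells of each $\FMod$ occurring in \eqref{models-for-cofree-sketches} and \eqref{models-for-2-sketches} are insensitive to the enhancement chosen on the codomain, and it remains only to compare objects and, for the second half of \eqref{models-for-2-sketches}, to account for the change of domain.

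The one substantive point is that every morphism of 2-sketches $M \colon \sk S \to \sk C$ automatically sends the generating tight morphisms of $\sk S^\Psi$ to tight morphisms of $\sk C^\Psi$. Indeed, let $(W, D, X, \gamma)$ be a specified $\F$-weighted cone of $\sk S^\Psi$, arising from a weight $W \in \Psi$ and a $\CAT$-weighted cone $(W_\lambda, D, X, \gamma)$ of $\sk S$. Since $M$ is a morphism of 2-sketches, $(W_\lambda, M D, M X, M \gamma)$ is a specified $\CAT$-weighted cone of $\sk C$; using the same lift $W \in \Psi$ for it in the construction of $\sk C^\Psi$, the morphisms $M(D(f))$ for $f$ tight in $\fc J$ and $M(\gamma_{J, Y})$ for $J \in \fc J$, $Y \in W_\tau(J)$ are all declared tight. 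These are exactly the images under $M$ of the tight diagram morphisms and tight cone projections of the cone $(W, D, X, \gamma)$, so, $M$ being functorial, $M$ carries all of $(\fc S^\Psi)_\tau$ into $(\fc C^\Psi)_\tau$.

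Granting this, \eqref{models-for-cofree-sketches} is immediate from \cref{models-of-free-F-sketches}: the objects of $(\FMod_{w', w}(\sk S^\Psi, \sk C^\Psi))_\lambda$ are the 2-sketch morphisms $\sk S \to \sk C$ carrying tight diagram morphisms and tight cone projections into $(\fc C^\Psi)_\tau$ — no restriction, by the previous paragraph — while those of $(\FMod_{w', w}(\sk S^\Psi, \chord{\sk C}))_\lambda$ are the 2-sketch morphisms $\sk S \to \sk C$ carrying them into the tight part of $\chord{\tc C}$, which is again no restriction since every morphism of $\chord{\tc C}$ is tight. Both object classes are therefore the class of 2-sketch morphisms $\sk S \to \sk C$, and the loose 1-cells and 2-cells agree by the preliminary remark; the resulting bijections are the evident identities on underlying data, hence manifestly 2-natural in $\sk S$ and $\sk C$.

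For \eqref{models-for-2-sketches}, the first isomorphism is the case $w' = w$ of \eqref{models-for-cofree-sketches}. For the second, note that when $w' = w$ the condition defining a loose 1-cell of $\FMod_{w, w}(\fc A, \fc B)$ — being $w$-natural with $w$-natural restriction to $\fc A_\tau$ — collapses to plain $w$-naturality, so it is immaterial whether the domain carries the tight structure of $\sk S^\Psi$ or that of $\chord{\sk S}$. Combining this with \cref{models-of-cofree-F-sketches}, applied with domain $\chord{\sk S}$ and codomain $\chord{\sk C}$, shows that $(\FMod_{w, w}(\chord{\sk S}, \chord{\sk C}))_\lambda$ has objects the 2-sketch morphisms $\sk S \to \sk C$, 1-cells the $w$-natural transformations between underlying 2-functors, and 2-cells the modifications — precisely the description obtained for the other two terms, which yields the isomorphism together with the concluding sentence of the statement. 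I expect the main obstacle to be nothing more than getting the lemma of the second paragraph exactly right: one must keep track of the fact that a single $\CAT$-cone of $\sk S$ may give rise to several $\F$-cones of $\sk S^\Psi$, one for each lift of its weight lying in $\Psi$, and that the matching lift must be used when exhibiting the image cone in $\sk C^\Psi$.
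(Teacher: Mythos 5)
Your proof is correct and follows the same route as the paper, which simply declares each isomorphism immediate from the explicit descriptions of models for free and cofree $\F$-sketches; you have merely filled in the details the paper leaves implicit. The ``substantive point'' you isolate in your second paragraph (that a 2-sketch morphism $\sk S \to \sk C$ automatically carries the generating tight morphisms of $\sk S^\Psi$ into $(\fc C^\Psi)_\tau$) is in fact already established in the paper's proof of the freeness theorem for $\ph^\Psi$, as part of verifying its functoriality, so nothing new is needed.
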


\begin{proof}
	Each isomorphism is immediate from the explicit description of the $\F$-categories of models for free and cofree $\F$-sketches in \cref{models-of-free-F-sketches,models-of-cofree-F-sketches}.
\end{proof}

\subsection{Algebraic 2-theories}
\label{algebraic-2-theories}

A particular setting of interest is that of two-dimensional universal algebra, in which the relevant class of limits are the finite powers (or, more generally, the $\kappa$-ary powers for a regular cardinal $\kappa$). Power 2-sketches capture many examples of categorical structure, such as monoidal categories and categories with limits of a given shape.

Our goal in this section is to show that our $\F$-categorical approach to finite power 2-sketches subsumes earlier work on the topic by \textcite{lack2007lawvere}, and to revisit a curious result of \textcite{power1999enriched}.

Two-dimensional algebraic theories, which we shall call \emph{algebraic 2-theories} (\cref{algebraic-2-theory}), were introduced and studied by \textcite{power1999enriched}. To recall the definition, let $\FPCat$ denote a skeleton of the full sub-2-category of $\Cat$ spanned by the small finitely presentable categories. Since each object of $\FPCat$ is a finite copower of $1$, its opposite 2-category $\FPCat\op$ admits finite powers.

\begin{definition}
	\label{algebraic-2-theory}
	An \emph{algebraic 2-theory} $S$ comprises a small 2-category $\tc S$ equipped with a finite power-preserving \ioo{} 2-functor $S \colon \FPCat\op \to \tc S$.
\end{definition}

A \emph{model} of $S$ in a 2-category $\tc C$ is a finite power-preserving 2-functor from $\tc S$ to $\tc C$. \textcite[\S5]{power1999enriched} defined a pseudo morphism of models of an algebraic 2-theory to simply be a pseudo natural transformation. However, in later unpublished work with Lack~\cite{lack2007lawvere}, the notion of pseudo morphism was refined, as we now recall.

\begin{definition}
	\label{model-for-an-algebraic-2-theory}
	Let $S \colon \FPCat\op \to \tc S$ be an algebraic 2-theory and let $M, N \colon \tc S \to \tc C$ be models thereof. Let $w \in \W$ be a weakness. A \emph{$w$-weak morphism} from $M$ to $N$ is a $w$-natural transformation $\phi \colon M \tto T$ for which $\phi_S \colon M S \to N S \colon \FPCat\op \to \tc C$ is 2-natural. Denote by $\Mod_w(S, \tc C)$ the 2-category whose objects are models of $S$ in $\tc C$, whose 1-cells are $w$-weak morphisms, and whose 2-cells are modifications.
\end{definition}

We explain how this fits within our approach. Each algebraic theory $S \colon \FPCat\op \to \tc S$ equips $\tc S$ with specified finite powers of $\b1$ and thus describes a finite power 2-sketch structure $\sk S$ on $\tc S$. Denoting by $\Pi$ the class of $\F$-weights for tight finite powers, from \cref{free-F-sketch} we obtain a tight finite power $\F$-sketch $\sk S^\Pi$. We show that $\sk S^\Pi$ is a faithful representation of $S$, in that its models and their morphisms are precisely those described in \cref{model-for-an-algebraic-2-theory}.

\begin{proposition}
	\label{models-of-algebraic-2-theory-are-models-of-F-sketch}
	Let $S \colon \FPCat\op \to \tc S$ be an algebraic 2-theory and let $\tc C$ be a 2-category. For each weakness $w \in \W$, there is an isomorphism of 2-categories:
	\[\Mod_w(S, \tc C) \iso (\FMod_w(\sk S^\Pi, \chord{\sk C}))_\lambda\]
	in which $\sk C$ is the finite power 2-sketch whose cones are given by all finite power cones in~$\tc C$.
\end{proposition}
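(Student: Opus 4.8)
The plan is to unwind both sides of the claimed isomorphism into explicit $2$-categorical data and match them. For the right-hand side I would invoke the description of models of a free $\F$-sketch from \cref{models-of-free-F-sketches} with target the chordate $\F$-sketch $\chord{\sk C}$: since $\chord{\sk C}$ has underlying $\F$-category $\chord{\tc C}$, in which every morphism is tight, and cones $\chord\gamma$ for $\gamma$ ranging over the finite power cones of $\tc C$, this gives that the objects of $(\FMod_w(\sk S^\Pi,\chord{\sk C}))_\lambda$ are models of the $2$-sketch $\sk S$ in $\sk C$ (the tightness-preservation conditions of \cref{models-of-free-F-sketches} being vacuous in a chordate target); that a loose $1$-cell $M \lto N$ is a $w$-natural transformation whose components at the tight diagram morphisms and tight cone projections of $\sk S^\Pi$ are identity $2$-cells; and that its $2$-cells are modifications. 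On the left-hand side, $\Mod_w(S,\tc C)$ consists of finite power-preserving $2$-functors $\tc S \to \tc C$, of $w$-natural transformations $\phi$ with $\phi_S$ $2$-natural, and of modifications (\cref{model-for-an-algebraic-2-theory}).

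I would first match the objects. By construction the cones of $\sk S$ are the $S$-images of the canonical cones of $\FPCat\op$ exhibiting each object $X$ as the power $X\pow \b 1$, whose projections $\pi_x\colon X \to \b 1$ (indexed by objects $x$ of $X$) correspond in $\Cat$ to the functors $\ceil x\colon \b 1\to X$. As $S$ is $\ioo$ and power-preserving, and every object of $\FPCat\op$ — hence every finite power present in $\tc S$ — is of this form, a $2$-functor $\tc S\to\tc C$ sends these cones to power cones in $\tc C$ exactly when it preserves finite powers; so models of $\sk S$ in $\sk C$ are exactly models of $S$ in $\tc C$. Since $2$-cells are modifications on both sides, they agree, and it remains only to match $1$-cells.

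This is the crux. From the construction of $\sk S^\Pi$ in the proof of \cref{free-F-sketch}, the tight morphisms of $\sk S^\Pi$ are generated under identities and composites by the tight diagram morphisms and tight cone projections; for a power cone the weight has domain $\bbn 1$, so its tight diagram morphisms are identities and its tight cone projections are precisely the morphisms $S(\pi_x)\colon X\to \b 1$, for $X$ an object of $\FPCat$ and $x$ an object of $X$. As naturality $2$-cells compose by whiskering and pasting, a $w$-natural $\phi\colon M\tto N$ has identity components at every tight morphism of $\sk S^\Pi$ iff $\phi_{S(\pi_x)}=\mathrm{id}$ for all such $X$ and $x$. It thus suffices to show, for models $M,N$, that this is equivalent to $\phi_S$ being $2$-natural, i.e.\ $\phi_{S(g)}=\mathrm{id}$ for every morphism $g$ of $\FPCat\op$. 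One direction is trivial. For the converse, fix $g\colon W\to V$ in $\FPCat\op$; as $M$ and $N$ preserve the power $V=V\pow \b 1$, the object $NV$ carries the power cone $V\pow N\b 1$ in $\tc C$, so by its two-dimensional universal property $\phi_{S(g)}$ is the identity once each whiskering $N(S(\pi_v))\cdot\phi_{S(g)}$, for $v$ an object of $V$, is. By the composition axiom for $w$-natural transformations together with $\phi_{S(\pi_v)}=\mathrm{id}$, this whiskering equals $\phi_{S(\pi_v)\circ S(g)}=\phi_{S(\pi_v\circ g)}$; but $\pi_v\circ g$ is again a morphism $W\to \b 1$ in $\FPCat\op$, hence one of the projections of $W=W\pow \b 1$, so $\phi_{S(\pi_v\circ g)}=\mathrm{id}$ by hypothesis. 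Hence $\phi_{S(g)}=\mathrm{id}$.

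The main obstacle is exactly this last propagation argument — that triviality of $\phi$ on power-cone projections forces triviality on the whole image of $S$ — and it is where power-preservation of the models is essential, together with the fact that in $\FPCat\op$ every object is a power of $\b 1$, so that precomposing a projection with any morphism again yields a projection. With objects, $1$-cells and $2$-cells of the two $2$-categories identified and the identifications evidently respecting composition and identities, one obtains the desired isomorphism of $2$-categories $\Mod_w(S,\tc C)\iso(\FMod_w(\sk S^\Pi,\chord{\sk C}))_\lambda$, which is the assertion.
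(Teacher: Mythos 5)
Your proposal is correct and follows essentially the same route as the paper's proof: unwind both sides via the description of models of a free $\F$-sketch, observe that the only generating tight morphisms are the power projections $S(\pi_X)$, and then propagate triviality of $\phi$ from the projections to all of $S$ by whiskering $\phi_{S(F)}$ with $N(S(\pi_X))$, using that $\pi_X$ precomposed with any morphism of $\FPCat\op$ is again a projection and that the projections jointly reflect identity $2$-cells by the two-dimensional universal property of powers. The only cosmetic difference is that you work with a morphism $g$ of $\FPCat\op$ where the paper works with the corresponding functor $F$ of $\FPCat$.
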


\begin{proof}
	We start by fixing some notation. Each finitely presentable category $\ct X \in \FPCat$ is the copower $\b 1\copow \ct X$ of $\b 1$ by $\ct X$, whose copower cocone $\copi \colon \ct X \tto \FPCat(\b 1, \ct X)$ sends each object $X \in \ct X$ to the functor $\copi_X \colon \b 1 \to \ct X$ identifying it. Consequently, it is the power cones $S_{\tc X, \b 1} \circ \copi\op \colon \tc X \tto \tc S(S(\tc X), S(\b 1))$ that define the finite power 2-sketch $\sk S$.

	We treat the case $w = l$, the others being identical in form. By \cref{models-of-free-F-sketches}, the 2-category $(\FMod_l(\sk S^\Pi, \chord{\sk C}))_\lambda$ has as objects the models of $S$ in $\tc C$. A 1-cell $\phi \colon M \rightsquigarrow N$ therein is a lax natural transformation whose components are identities at each power projection $\pi_X \colon S(\tc X) \to S(\b 1)$ for $X \in \tc X$. We must show that every such transformation defines a morphism of $\Mod_l(S, \tc C)$, \ie{} that each component $\phi_{S(F)}$ is the identity for each functor $F \colon \tc X \to \tc Y \in \FPCat$; the converse follows trivially, since the power cones are in the image of $S$.

	Since $\phi$ is a lax natural transformation, we have the following equality of 2-cells, for each $X \in \ct X$, so that the 2-cell on the left is an identity.
	\[
	\begin{tikzcd}[column sep=large]
		{M(S(\ct Y))} & {N(S(\ct Y))} \\
		{M(S(\ct X))} & {N(S(\ct X))} \\
		{M(S(\b1))} & {N(S(\b1))}
		\arrow["{\phi_{S(\ct Y)}}", from=1-1, to=1-2]
		\arrow["{M(S(F))}"', from=1-1, to=2-1]
		\arrow["{\phi_{S(F)}}"', shorten <=9pt, shorten >=9pt, Rightarrow, from=1-2, to=2-1]
		\arrow["{N(S(F))}", from=1-2, to=2-2]
		\arrow["{\phi_{S(\ct X)}}"{description}, from=2-1, to=2-2]
		\arrow[""{name=0, anchor=center, inner sep=0}, "{M(S(\pi_X))}"', from=2-1, to=3-1]
		\arrow[""{name=1, anchor=center, inner sep=0}, "{N(S(\pi_X))}", from=2-2, to=3-2]
		\arrow["{\phi_{S(\b1)}}"', from=3-1, to=3-2]
		\arrow["{=}"{description}, draw=none, from=0, to=1]
	\end{tikzcd}
	\quad
	=
	\quad
	\begin{tikzcd}[column sep=large]
		{M(S(\ct Y))} & {N(S(\ct Y))} \\
		\\
		{M(S(\b1))} & {N(S(\b1))}
		\arrow["{\phi_{S(\ct Y)}}", from=1-1, to=1-2]
		\arrow[""{name=0, anchor=center, inner sep=0}, "{M(S(\pi_{F(X)}))}"', from=1-1, to=3-1]
		\arrow[""{name=1, anchor=center, inner sep=0}, "{N(S(\pi_{F(X)}))}", from=1-2, to=3-2]
		\arrow["{\phi_{S(\b1)}}"', from=3-1, to=3-2]
		\arrow["{=}"{description}, draw=none, from=0, to=1]
	\end{tikzcd}
	\]
	It follows from the 2-dimensional universal property of powers that the projections $M(S(\pi_X))$, for each $X \in \ct X$, jointly reflect identity 2-cells. Consequently, $\phi_{S(F)}$ is also an identity, as required. Thus, there is a bijection between the 1-cells in both 2-categories. Finally, the 2-cells in both 2-categories coincide by definition.
\end{proof}

As mentioned above, in \citeauthor{power1999enriched}'s earlier work~\cite[\S5]{power1999enriched}, the notion of pseudo morphism of models was simply that of a pseudonatural transformation, with no further conditions.
In light of our earlier discussion, one might suppose this weaker notion of morphism to be inadequate. However, there is in fact a biequivalence between $\Mod_w(S, \tc C)$ and the 2-category of models, $w$-natural transformations, and modifications, justifying \citeauthor{power1999enriched}'s weaker choice; this observation essentially appears as \cite[Theorem~5.3]{power1999enriched}.

We now explain this biequivalence directly and show that it is peculiar to algebraic 2-theories: for most $\F$-sketches, it is not the case that loose $(s, p)$-natural transformations are equivalent to loose $(p, p)$-natural transformations. The following lemma is the key to establishing the biequivalence for algebraic 2-theories.

\begin{lemma}
	\label{strictification-for-morphisms-of-power-theories}
	Let $S \colon \FPCat\op \to \tc S$ be an algebraic 2-theory, let $\fc C$ be an $\F$-category, and let $M, N \colon \sk S^\Pi \to \fc C$ be models. For each weakness $w \ge p$, every loose $(p, w)$-natural transformation $\phi \colon M \tto N$ is isomorphic to a loose $(s, w)$-natural transformation $M \tto N$.
\end{lemma}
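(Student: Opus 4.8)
The plan is to strictify $\phi$ along the tight morphisms by conjugating it with a suitable invertible modification, exploiting that every tight morphism of $\sk S^\Pi$ is a power cone projection (or an identity). Recall from \cref{examples-of-cofree-F-sketches} and the discussion preceding the lemma that $\sk S^\Pi$ is the free $\Pi$-limit $\F$-sketch on the finite power $2$-sketch $\sk S$, and from the proof of \cref{free-F-sketch} that its tight morphisms are generated under identities and composition by the tight diagram morphisms and tight cone projections. Since the diagram underlying a power weight is indexed by the unit category, there are no nontrivial tight diagram morphisms, so the tight morphisms of $(\fc S^\Pi)_\tau$ are precisely the identities together with the projections $\pi_X \colon S(\ct X) \to S(\b 1)$ for $\ct X \in \FPCat$ and $X \in \ob \ct X$. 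A loose $(s, w)$-natural transformation is therefore exactly one whose naturality $2$-cell at each $\pi_X$ is an identity, and this is the only thing we must arrange beyond what $\phi$ already satisfies.

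First I would fix the object components of the new transformation $\psi$. Since $M$ and $N$ are models, they send the specified tight power cones of $\sk S^\Pi$ to $\F$-weighted limit cones in $\fc C$; by \cref{enhanced-2-limits-as-2-limits} these exhibit $M(S(\ct X))$ and $N(S(\ct X))$ as the powers $\ct X \pow M(S(\b 1))$ and $\ct X \pow N(S(\b 1))$ in $\fc C_\lambda$, with full two-dimensional universal property. Set $\psi_{S(\b 1)} \defeq \phi_{S(\b 1)}$ and $m_{S(\b 1)} \defeq 1$, and for general $\ct X$ let $\psi_{S(\ct X)} \colon M(S(\ct X)) \lto N(S(\ct X))$ be the $1$-cell induced by the $\ct X$-cone whose projection at $X$ is $\phi_{S(\b 1)} \circ M(S(\pi_X))$, with cone $2$-cells obtained by whiskering $\phi_{S(\b 1)}$ with the image under $M$ of the cone $2$-cells of the power $M(S(\ct X))$. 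The one-dimensional universal property gives $N(S(\pi_X)) \circ \psi_{S(\ct X)} = \phi_{S(\b 1)} \circ M(S(\pi_X))$ on the nose. Using $p$-naturality of $\phi$ at the $\pi_X$, the family $\{ \phi_{\pi_X} \}$ of invertible $2$-cells, together with $\phi$'s action on the cone $2$-cells, forms a morphism of $\ct X$-cones; hence the two-dimensional universal property produces a unique, necessarily invertible, $2$-cell $m_{S(\ct X)} \colon \phi_{S(\ct X)} \Rightarrow \psi_{S(\ct X)}$ with $N(S(\pi_X)) \circ m_{S(\ct X)} = \phi_{\pi_X}$ for every $X$.

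Next I would transport $\phi$'s $2$-cell data along $m$: for a loose morphism $f \colon S \lto S'$, define $\psi_f$ by pasting $\phi_f$ with the appropriate whiskers of $m_S^{-1}$ and $m_{S'}$, so that $m$ becomes an invertible modification $\phi \Rrightarrow \psi$ by construction. Because $w \ge p$, conjugating a $w$-cell by invertible $2$-cells again yields a $w$-cell, and the $w$-natural transformation axioms for $\psi$ follow from those for $\phi$ by a routine transport-of-structure argument. It then remains only to compute the naturality $2$-cells of $\psi$ at the tight morphisms: at a tight identity this is trivial, while at $\pi_X$ we obtain $\psi_{\pi_X} = \phi_{\pi_X} \circ \bigl( N(S(\pi_X)) \circ m_{S(\ct X)} \bigr)^{-1} = \phi_{\pi_X} \circ \phi_{\pi_X}^{-1} = 1$. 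Since identities and the $\pi_X$ generate the tight morphisms, $\psi$ restricts to a $2$-natural transformation on $(\fc S^\Pi)_\tau$, so $\psi$ is the required loose $(s, w)$-natural transformation, isomorphic to $\phi$ via the invertible modification $m$.

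I expect the main obstacle to be the bookkeeping of the last two paragraphs: verifying that the induced $1$-cells $\psi_{S(\ct X)}$ and the conjugated $2$-cells $\psi_f$ assemble into a genuine $w$-natural transformation — in particular that the naturality and cubical coherence conditions transport correctly along $m$ — rather than merely a compatible family. The conceptual point, that strictness need only be checked at the power projections and is then immediate from the choice of $m$, is straightforward; the care lies in confirming that nothing is disturbed elsewhere. It is also worth making explicit where the hypothesis $w \ge p$ enters, namely in absorbing the non-identity isomorphisms $m_S$ into the $w$-cells without leaving the class of $w$-cells; this is precisely what fails for a general $\F$-sketch, and hence explains why the ensuing biequivalence is peculiar to algebraic $2$-theories.
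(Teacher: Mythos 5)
Your proposal is correct and follows essentially the same route as the paper's proof: define the new components $\psi_{S(\ct X)}$ via the one-dimensional universal property of the powers so that they commute strictly with the projections, use the two-dimensional universal property together with the invertible cells $\phi_{\pi_X}$ to produce the comparison isomorphisms (your $m$ is the paper's $\upsilon$, up to direction), transport the $w$-natural structure along this modification, and check strictness at the generating tight morphisms. The paper's argument is the same, differing only in that it verifies the cone-morphism compatibility condition by an explicit pasting computation rather than asserting it.
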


\begin{proof}
	We use the same notation as in the proof of \cref{models-of-algebraic-2-theory-are-models-of-F-sketch}, but elide the action of $S$. Define $\varphi_{\b1} \defeq \phi_{\b1} \colon M(\b1) \lto N(\b1)$. For each $\ct X \in \FPCat$, there exists a unique loose morphism $\varphi_{\ct X} \colon M(\ct X) \to N(\ct X)$ making the leftmost and rightmost squares in the following diagram commute for each $f \colon A \to B$ in $\ct X$, by the universal property of the power. (Note in particular that this choice for $\varphi_{\ct X}$ agrees with our choice for $\varphi_{\b1}$ when $\ct X = \b1$, since in that case the unique projection is the identity.)
	\[\begin{tikzcd}
		{M(\ct X)} &&& {N(\ct X)} \\
		\\
		{M(\b1)} &&& {N(\b1)}
		\arrow["{\varphi_{\ct X}}", squiggly, from=1-1, to=1-4]
		\arrow[""{name=0, anchor=center, inner sep=0}, "{M(\pi_A)}"', shift right=3, curve={height=6pt}, from=1-1, to=3-1]
		\arrow[""{name=1, anchor=center, inner sep=0}, "{M(\pi_B)}", shift left=3, curve={height=-6pt}, from=1-1, to=3-1]
		\arrow[""{name=2, anchor=center, inner sep=0}, "{N(\pi_A)}"', shift right=3, curve={height=6pt}, from=1-4, to=3-4]
		\arrow[""{name=3, anchor=center, inner sep=0}, "{N(\pi_B)}", shift left=3, curve={height=-6pt}, from=1-4, to=3-4]
		\arrow["{\varphi_{\b1}}"', squiggly, from=3-1, to=3-4]
		\arrow["{M(\pi_f)}", shorten <=5pt, shorten >=5pt, Rightarrow, from=0, to=1]
		\arrow["{N(\pi_f)}", shorten <=5pt, shorten >=5pt, Rightarrow, from=2, to=3]
	\end{tikzcd}\]
	We shall show that $\varphi_{\ct X} \colon M(\ct X) \to N(\ct X)$ extends to a loose $(s, w)$-natural transformation isomorphic to $\phi$.
	We define an invertible 2-cell as follows (in which $\phi_{\pi_A}$ is invertible since $\pi_A$ is tight).
	\begin{equation}
	\label{upsilon-def}
	\upsilon_A^{\ct X} \defeq
	\begin{tikzcd}
		& {M(\ct X)} \\
		{N(\ct X)} & {M(\b1)} & {N(\ct X)} \\
		& {N(\b1)}
		\arrow["{\varphi_{\ct X}}"', squiggly, from=1-2, to=2-1]
		\arrow["{M(\pi_A)}"{description}, from=1-2, to=2-2]
		\arrow["{\phi_{\ct X}}", squiggly, from=1-2, to=2-3]
		\arrow["{N(\pi_A)}"', from=2-1, to=3-2]
		\arrow["{=}"{description}, draw=none, from=2-2, to=2-1]
		\arrow["{\phi\inv_{\pi_A}}", shorten <=6pt, shorten >=6pt, Rightarrow, from=2-2, to=2-3]
		\arrow["{\varphi_{\b1}}"{description}, squiggly, from=2-2, to=3-2]
		\arrow["{N(\pi_A)}", from=2-3, to=3-2]
	\end{tikzcd}
	\end{equation}
	This satisfies
	\[
	\begin{tikzcd}[column sep=small]
		& {M(\ct X)} \\
		{N(\ct X)} && {N(\ct X)} \\
		& {N(\b1)}
		\arrow["{\varphi_{\ct X}}"', squiggly, from=1-2, to=2-1]
		\arrow["{\phi_{\ct X}}", squiggly, from=1-2, to=2-3]
		\arrow["{\upsilon_B^{\ct X}}", shorten <=24pt, shorten >=24pt, Rightarrow, from=2-1, to=2-3]
		\arrow[""{name=0, anchor=center, inner sep=0}, "{N(\pi_B)}", from=2-1, to=3-2]
		\arrow[""{name=1, anchor=center, inner sep=0}, "{N(\pi_A)}"', curve={height=45pt,pos=.2}, from=2-1, to=3-2]
		\arrow["{N(\pi_B)}", from=2-3, to=3-2]
		\arrow["{N(\pi_f)}"{description}, shorten <=4pt, shorten >=4pt, Rightarrow, from=1, to=0]
	\end{tikzcd}
	\qquad = \qquad
	\begin{tikzcd}[column sep=small]
		& {M(\ct X)} \\
		{N(\ct X)} && {N(\ct X)} \\
		& {N(\b1)}
		\arrow["{\varphi_{\ct X}}"', squiggly, from=1-2, to=2-1]
		\arrow["{\phi_{\ct X}}", squiggly, from=1-2, to=2-3]
		\arrow["{\upsilon_A^{\ct X}}", shorten <=24pt, shorten >=24pt, Rightarrow, from=2-1, to=2-3]
		\arrow["{N(\pi_A)}"', from=2-1, to=3-2]
		\arrow[""{name=0, anchor=center, inner sep=0}, "{N(\pi_A)}"', from=2-3, to=3-2]
		\arrow[""{name=1, anchor=center, inner sep=0}, "{N(\pi_B)}", curve={height=-45pt,pos=.2}, from=2-3, to=3-2]
		\arrow["{N(\pi_f)}"{description}, shorten <=4pt, shorten >=4pt, Rightarrow, from=0, to=1]
	\end{tikzcd}
	\]
	so, by the two-dimensional universal property of the power $N(\ct X)$, we obtain a unique 2-cell $\upsilon^{\ct X} \colon \varphi_{\ct X} \tto \phi_{\ct X}$ for which the following equation holds, and which is furthermore invertible.
	\begin{equation}
	\label{upsilon-prop}
	\upsilon_A^{\ct X} \quad =
	\begin{tikzcd}[column sep=huge]
		{M(\ct X)} & {N(\ct X)} & {N(\b1)}
		\arrow[""{name=0, anchor=center, inner sep=0}, "{\varphi_{\ct X}}", curve={height=-18pt}, squiggly, from=1-1, to=1-2]
		\arrow[""{name=1, anchor=center, inner sep=0}, "{\phi_{\ct X}}"', curve={height=18pt}, squiggly, from=1-1, to=1-2]
		\arrow["{N(\pi_A)}"{description}, from=1-2, to=1-3]
		\arrow["{\upsilon^{\ct X}}"', shorten <=5pt, shorten >=5pt, Rightarrow, dashed, from=0, to=1]
	\end{tikzcd}
	\end{equation}
	By transport of structure along the isomorphism, $\varphi$ uniquely obtains the structure of a loose $(p, w)$-natural transformation for which $\upsilon \colon \varphi \iso \phi$ is a modification.

	Finally, we must verify that $\varphi$ is actually $(s, w)$-natural. For each object $A \in \ct X$, we have that
	\[
	\varphi_{\pi_A} \quad = \quad
	\begin{tikzcd}
		{M(\ct X)} && {N(\ct X)} \\
		\\
		{M(\b1)} && {N(\b1)}
		\arrow[""{name=0, anchor=center, inner sep=0}, "{\varphi_{\ct X}}", curve={height=-12pt}, squiggly, from=1-1, to=1-3]
		\arrow[""{name=1, anchor=center, inner sep=0}, "{\phi_{\ct X}}"{description}, curve={height=12pt}, squiggly, from=1-1, to=1-3]
		\arrow["{M(\pi_A)}"', from=1-1, to=3-1]
		\arrow["{\phi_{\pi_A}}"', shorten <=25pt, shorten >=25pt, Rightarrow, from=1-3, to=3-1]
		\arrow["{N(\pi_A)}", from=1-3, to=3-3]
		\arrow[""{name=2, anchor=center, inner sep=0}, "{\phi_{\b1}}", curve={height=-12pt}, squiggly, from=3-1, to=3-3]
		\arrow[""{name=3, anchor=center, inner sep=0}, "{\varphi_{\b1}}"', curve={height=12pt}, squiggly, from=3-1, to=3-3]
		\arrow["{\upsilon^{\ct X}}"', shorten <=3pt, shorten >=3pt, Rightarrow, from=0, to=1]
		\arrow["{=}"{description}, draw=none, from=2, to=3]
	\end{tikzcd}
	\]
	which is equal to the identity 2-cell by \eqref{upsilon-def} and \eqref{upsilon-prop}.
\end{proof}

\begin{theorem}
	\label{power-biequivalence}
	Let $S \colon \FPCat\op \to \tc C$ be an algebraic 2-theory, and let $\fc C$ be an $\F$-category. For each weakness $w \ge p$, the inclusion
	\[\FMod_{s, w}(\sk S^\Pi, \fc C) \to \FMod_{p, w}(\sk S^\Pi, \fc C)\]
	induces a biequivalence between the underlying 2-categories of loose morphisms, where $\sk S^\Pi$ is the tight finite power $\F$-sketch induced by $S$.
\end{theorem}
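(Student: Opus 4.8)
The plan is to show that the identity-on-objects 2-functor in question is a biequivalence by verifying the two standard criteria: essential surjectivity on objects, and local equivalence of hom-categories. First I would observe that the objects of $\FMod_{s, w}(\sk S^\Pi, \fc C)$ and of $\FMod_{p, w}(\sk S^\Pi, \fc C)$ coincide, since in both cases they are the models of $\sk S^\Pi$ in $\fc C$ — a notion that makes no reference to the chosen class of tight transformations (see \cref{models-of-free-F-sketches}). Moreover the inclusion is a well-defined $\F$-functor: since $s \le p \le w$, an $s$-cell (an identity 2-cell) is in particular a $p$-cell, so every loose $(s, w)$-natural transformation is a loose $(p, w)$-natural transformation, and tightness of morphisms is preserved. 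Thus the inclusion is identity-on-objects and essential surjectivity is immediate.

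It then remains to check that, for each pair of models $M, N$, the induced functor on hom-categories
\[(\FMod_{s, w}(\sk S^\Pi, \fc C))_\lambda(M, N) \to (\FMod_{p, w}(\sk S^\Pi, \fc C))_\lambda(M, N)\]
is an equivalence of categories. For full faithfulness, I would note that in both 2-categories the 2-cells are modifications (\cref{functor-F-category}), and the notion of modification between two loose $w$-natural transformations is insensitive to whether those transformations restrict to $s$-natural or merely $p$-natural families on the tight morphisms; hence this functor is injective on objects and bijective on morphism-sets, so in particular fully faithful. For essential surjectivity of the hom-functor, the key input is \cref{strictification-for-morphisms-of-power-theories}: every loose $(p, w)$-natural transformation $\phi \colon M \lto N$ between models admits an invertible modification $\upsilon \colon \varphi \iso \phi$ with $\varphi$ loose $(s, w)$-natural, so $\phi$ lies in the essential image of the inclusion. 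Combining these observations, the inclusion is a biequivalence.

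There is no real obstacle remaining: the substantive content has been front-loaded into \cref{strictification-for-morphisms-of-power-theories}, whose proof exploits the two-dimensional universal property of powers to reflect identity 2-cells along the power projections $N(S(\pi_A))$. The only work left in this proof is the unwinding of definitions sketched above, together with the routine verification that the inclusion functor respects tightness, which follows from the order relations on $\W$.
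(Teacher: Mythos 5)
Your proof is correct and follows essentially the same route as the paper's: both reduce the claim to \cref{strictification-for-morphisms-of-power-theories} for local essential surjectivity on loose morphisms, with the identity-on-objects and fully-faithful-on-modifications parts being definitional unwinding. The paper's proof additionally records that the strictified transformation $\varphi$ is tight whenever $\phi$ is (using that the tight morphisms of $\sk S^\Pi$ are generated by the power projections), but this extra observation is not needed for the stated claim, which concerns only the underlying 2-categories of loose morphisms.
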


\begin{proof}
	By \cref{strictification-for-morphisms-of-power-theories}, the inclusion is essentially surjective on loose morphisms. Furthermore, if a loose $(p, w)$-natural transformation $\phi \colon M \tto N$ is tight, then the isomorphic $(s, w)$-natural transformation $\varphi \colon M \tto N$ is also tight, because the tight morphisms in $\sk S^\Psi$ are generated by the cone projections for each power, and we know from \cref{strictification-for-morphisms-of-power-theories} that $\varphi$ restricts to the identity on these.
\end{proof}

The situation of \cref{power-biequivalence} is far from being the norm. For instance, if $\sk S$ is an $\F$-sketch that contains nontrivial tight morphisms, there is no reason to expect that every loose $(p, p)$-natural transformation is equivalent to a loose $(s, p)$-natural transformation, because the tight morphisms in $\sk S$ may not be respected.

\begin{example}
	\label{involution-counterexample}
	Let $\fc U$ be the $\F$-category generated from a single object $*$ with a tight involution, \ie an endomorphism $i \colon * \to *$ such that $i \c i = 1_*$. We may equip $\fc U$ with the structure of an $\F$-sketch $\sk U$ in which $i$ exhibits a cone for the diagram $\b1 \to \fc U$ picking out the unique object $*$ (\ie a unary product cone). A model for $\fc U$ in $\chord\Cat$ is a category equipped with an involution (note that every involution is a limiting cone in $\chord\Cat$).

	Now let $M \colon \fc U \to \chord\Cat$ be the model picking out the identity morphism on the terminal 2-category $\b1$ and let $N \colon \fc U \to \chord\Cat$ be the model picking out the involution $\tx{swap} \colon \b I \to \b I$, on the free-standing isomorphism $\b I \defeq \{ 0 \iso 1 \}$, that swaps $0$ and $1$. There are two functors $\b1 \to \b I$, which pick out $0$ and $1$ respectively, and these define $(p, p)$-morphisms from $M$ to $N$, since $0 \iso 1$.
	\[\begin{tikzcd}
		{\b 1} & {\b I} \\
		\b1 & {\b I}
		\arrow["{\phi_*}", from=1-1, to=1-2]
		\arrow[""{name=0, anchor=center, inner sep=0}, "{M(i) = 1_{\b 1}}"', from=1-1, to=2-1]
		\arrow[""{name=1, anchor=center, inner sep=0}, "{\tx{swap} = N(i)}", from=1-2, to=2-2]
		\arrow["{\phi_*}"', from=2-1, to=2-2]
		\arrow["\iso"{description}, draw=none, from=0, to=1]
	\end{tikzcd}\]
	However, there exist no $(s, p)$-morphisms from $M$ to $N$: such a morphism would necessarily make the square above commute, but there exist no such functors $\phi_*$. Consequently, the inclusion
	\[\FMod_{s, p}(\sk U, \chord\Cat) \to \FMod_{p, p}(\sk U, \chord\Cat)\]
	is not a biequivalence on 2-categories of loose morphisms.

	Note that $\sk U$ is a free $\F$-sketch with respect to a class of tight weights (\cref{examples-of-cofree-F-sketches}), so this shows that being free is not enough to guarantee that pseudo morphisms can be strictified in the same way as \cref{power-biequivalence}.
\end{example}

\begin{remark}
	\Cref{involution-counterexample} also demonstrates that the free $\F$-sketch construction of \cref{free-F-sketch} does not preserve Morita equivalence of sketches, in the following sense. Let $\sk U_1 \defeq \sk U_\lambda$, and let $\sk U_2$ be the 2-sketch with the same underlying 2-category as $\sk U_1$ but with no cones. These two 2-sketches have the same 2-category of models in $\Cat$, since the cone preservation condition for $\sk U_1$ is trivial. However, letting $\Psi \defeq \{ \Delta(\b1) \colon \b1 \to \fc F \}$ be the class of weights for  tight unary products, we have that $(\sk U_1)^\Psi$ and $(\sk U_2)^\Psi$ have inequivalent $\F$-categories of models in $\fc F$, since in the former $i \colon * \lto *$ is tight, whereas it is not in the latter. Explicitly, a model of $(\sk U_1)^\Psi$ in $\fc F$ comprises a full embedding $\ct M_\tau \to \ct M_\lambda$ and involutions on $\ct M_\tau$ and $\ct M_\lambda$ rendering the following square commutative; whereas a model of $(\sk U_2)^\Psi$ in $\fc F$ comprises a full embedding $\ct M_\tau \to \ct M_\lambda$ and an involution on $\ct M_\lambda$ satisfying no other conditions.
	\[\begin{tikzcd}
		{\ct M_\tau} & {\ct M_\lambda} \\
		{\ct M_\tau} & {\ct M_\lambda}
		\arrow["{\ct M}", from=1-1, to=1-2]
		\arrow["{I_\tau}"', from=1-1, to=2-1]
		\arrow["{I_\lambda}", from=1-2, to=2-2]
		\arrow["{\ct M}"', from=2-1, to=2-2]
	\end{tikzcd}\qedshift\]
\end{remark}

\begin{example}
	Let $\dc M$ and $\dc N$ be the strict double categories generated from the following double graphs; note that neither double category has any non-identity cells.
	\begin{align*}
		\dc M & \defeq 
		\begin{tikzcd}[ampersand replacement=\&]
			0 \& 1
			\arrow["m", "\shortmid"{marking}, shift left=2, from=1-1, to=1-2]
			\arrow["{m'}"', "\shortmid"{marking}, shift right=2, from=1-1, to=1-2]
		\end{tikzcd} &
		\dc N & \defeq 
		\begin{tikzcd}[ampersand replacement=\&]
			X \& Y \\
			{X'} \& {Y'}
			\arrow["n", "\shortmid"{marking}, from=1-1, to=1-2]
			\arrow["\iso"', from=1-1, to=2-1]
			\arrow["\iso", from=1-2, to=2-2]
			\arrow["{n'}"', "\shortmid"{marking}, from=2-1, to=2-2]
		\end{tikzcd}
	\end{align*}
	We define a strict wobbly pseudo functor $F \colon \dc M \to \dc N$ (in the sense of \cref{sketch-pseudocategory}) as follows, where the isomorphisms mediating the compatibility between the object assignments and the domain and codomain of $F(m)$ and $F(n)$ are uniquely determined.
	\begin{align*}
		F(0) & \defeq X &
		F(1) & \defeq Y &
		F(m) & \defeq n &
		F(m') & \defeq n'
	\end{align*}
	Now observe that there exists no pseudo functor $F' \colon \dc M \to \dc N$ that is isomorphic (as a wobbly pseudo functor) to $F$. Without loss of generality, we could take $F'(0) = X$ and $F'(1) = Y$, and $F'(m) = n$. $F'(m')$ is then forced to have domain $X$ and codomain $Y$, for which the only possible choice is $n$. However, to have an isomorphism of wobbly pseudo functors, we must have an invertible cell $n = F(m') \iso F'(m') = n'$, but none such exists. (Note that, by \cite[Theorem~4.5]{pare2015wobbly}, wobbly double functors may be strictified when their codomain has companions and conjoints of isomorphisms, so it is crucial for our example that $\dc N$ does not satisfy this property.)

	Consequently, taking the $\F$-sketch $\sk C$ for pseudocategories (\cref{sketch-pseudocategory}), the inclusion
	\[\FMod_{s, p}(\sk C, \chord\Cat) \to \FMod_{p, p}(\sk C, \chord\Cat)\]
	is not a biequivalence on 2-categories of loose morphisms.
\end{example}

\subsection{Flexible limit 2-sketches}
\label{flexible-limit-2-sketches}

To capture structures such as regular or exact categories~\cite{barr1971exact}, one needs to move beyond the algebraic setting of finite powers to richer classes of limits, such as PIE limits or flexible limits. In this section, we show how the \mbox{$\F$-categorical} approach clarifies the existing literature on PIE-limit 2-theories. In particular, we introduce \emph{cloven flexible limit 2-sketches} (\cref{cloven-sketch}), which generalise the PIE-limit 2-theories of \cite{bourke2021accessible}, and show that they are subsumed by $\F$-sketches (\cref{model-comparison-for-cloven-sketches}).

We start by giving an example that explains the motivation for the introduction of flexible limit 2-sketches as a generalisation of algebraic 2-theories.

\begin{example}
	\label{regular-category}
	The definition of a regular category involves kernel pairs and their coequalisers. Whilst the existence of kernel pairs can be captured using finite powers of categories and adjunctions between them (both of which are algebraic in nature), coequalisers of kernel pairs cannot be. Rather, to express the existence of such coequalisers in a category $\ct X$, one needs a \emph{partial} left adjoint to the diagonal $\Delta \colon \ct X \to \ct X^\rightrightarrows$, defined only at the kernel pairs. In other words, one requires a left adjoint to the diagonal, relative to the functor sending each morphism to its kernel pair, as depicted on the left below.
	\[\begin{tikzcd}
		& {\ct X} \\
		{\ct X^\to} && {\ct X^\rightrightarrows}
		\arrow[""{name=0, anchor=center, inner sep=0}, "\Delta", from=1-2, to=2-3]
		\arrow[""{name=1, anchor=center, inner sep=0}, "\coeq", from=2-1, to=1-2]
		\arrow["\kerp"', hook, from=2-1, to=2-3]
		\arrow["\dashv"{anchor=center}, shift right=2, draw=none, from=1, to=0]
	\end{tikzcd}
	\hspace{4cm}
	\begin{tikzcd}
		{\kerp \comma \Delta} & {\ct X} \\
		{\ct X^\to} & {\ct X^\rightrightarrows}
		\arrow["{{\pi_2}}", from=1-1, to=1-2]
		\arrow[""{name=0, anchor=center, inner sep=0}, "{{\pi_1}}", from=1-1, to=2-1]
		\arrow["\Delta", from=1-2, to=2-2]
		\arrow[""{name=1, anchor=center, inner sep=0}, "\ell", shift left=5, from=2-1, to=1-1]
		\arrow[shorten <=10pt, shorten >=10pt, Rightarrow, from=2-1, to=1-2]
		\arrow["\kerp"', from=2-1, to=2-2]
		\arrow["\dashv"{anchor=center}, draw=none, from=1, to=0]
	\end{tikzcd}\]
	Unlike ordinary adjoints, relative adjoints are not an equational concept, in that it is not possible to define the data of a relative adjunction in a 2-category purely in terms of objects, 1-cells, and 2-cells. However, in some settings, relative adjunctions coincide with absolute left lifts\footnotemark{}, which may be captured using comma objects.
	\footnotetext{In particular, this is true for functors between ordinary categories (though not, in general, for enriched functors~\cite[Remark~5.9]{arkor2024formal}).}%
	In particular, the left relative adjoint $\coeq$ above exists if and only if the first projection from the comma object on the right above admits a left adjoint with identity unit ~\cite[Theorem~1.17]{street1980semitopological}, from which we obtain $\coeq \defeq \pi_2 \circ \ell$.
	Accordingly, we may capture the existence of coequalisers of kernel pairs in a 2-sketch using comma objects. To express the pullback stability of regular epimorphisms, one further needs inverters. For more details regarding the example of regular categories, see \cite[\S6.5]{bourke2021accessible}.
\end{example}

The kinds of two-dimensional limits involved in \cref{regular-category} includes powers, comma objects, and inverters. These are all examples of the well-behaved class of \emph{PIE limits}~\cite{power1991characterization}, which include many other important examples of limits, such as algebra objects for monads, pseudo limits, and lax limits. Accordingly \citeauthor{bourke2021accessible} defined in \cite[\S9]{bourke2021accessible} the notion of \emph{PIE-limit 2-theory}, as well as the 2-categories of models for such theories. In this setting, a lax morphism of models is a lax natural transformation which has identity components at certain (but not all) cone projections; the precise definition is fairly subtle, and we shall recall it below. A particular subtlety is that the choice of cone projections is not intrinsic to the weight, but rather additional structure. \citeauthor{bourke2021accessible} suggested that the situation could be clarified by working $\F$-categorically, and our main aim in this section is to show how this is done.

In fact, we will work with the slightly larger class of \emph{flexible limits}, which are generated by PIE limits together with splittings of idempotents. Whilst flexible limits are not essential for the development of \cite{bourke2021accessible}, they simplify certain constructions: for instance, flexible limits include pullbacks of normal isofibrations, which are convenient in sketching structures such as exact categories~\cite[\S6.5]{bourke2021accessible}.

We begin by recalling the definition of flexible limits and one of their characterisations.

\begin{theorem}
	\label{flexible-weight-characterisation}
	The following are equivalent for a weight $W \colon \tc J \to \CAT$.
	\begin{enumerate}
		\item $W$ is flexible (\aka PIES)~\cite{bird1989flexible}, \ie $W$ lies in the saturation of the class of weights for \underline products, \underline inserters, \underline equifiers, and \underline splittings of idempotents.
		\item Each connected component of the category of elements $\ob \circ W_0 \colon \tc J_0 \to \SET$ admits a naturally weakly initial object, \ie an object $I$ equipped with a morphism $i_A \colon I \to A$ for each object $A$ such that, for each $f \colon A \to B$, we have $f \c i_A = i_B$.
	\end{enumerate}
\end{theorem}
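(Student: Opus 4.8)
The plan is to recast condition (2) in more familiar terms and then prove the equivalence in that form. Observe that (2) says precisely that the ordinary functor $\ob\circ W_0\colon\tc J_0\to\SET$ is a \emph{projective} object of $[\tc J_0,\SET]$, equivalently a retract of a coproduct of representables. Indeed, a section of the canonical epimorphism $\coprod_{(J,x)}\tc J_0(J,-)\epito\ob\circ W_0$ (the coproduct indexed by all $J\in\tc J_0$ and all $x\in\ob W(J)$) is, by naturality, forced to take its value in a summand that depends only on the connected component of the corresponding element of $\el(\ob\circ W_0)$; unwinding this, such a section amounts exactly to a choice, for each connected component, of a naturally weakly initial object, and conversely. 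So it suffices to prove: $W$ is flexible if and only if $\ob\circ W_0$ is projective.

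The forward implication is formal. Let $P\subseteq[\tc J,\CAT]$ be the class of weights $W$ with $\ob\circ W_0$ projective. Since $\ob\circ(\tc J(J,-))_0=\tc J_0(J,-)$ is representable, $P$ contains the representables. The functor $\ob\circ(-)_0\colon[\tc J,\CAT]\to[\tc J_0,\SET]$ preserves coproducts and retracts; and it sends a (pointwise) coinserter or coequifier to a functor with the same value as the common codomain of the diagram, because a coinserter or coequifier in $\CAT$ leaves the set of objects unchanged. As coproducts and retracts of projectives are projective, $P$ is therefore closed under coproducts, coinserters, coequifiers and splittings of idempotents. Since the flexible weights are, by the definition of the saturation in (1), exactly the closure of the representables in $[\tc J,\CAT]$ under $\Phi$-weighted colimits --- that is, under these four operations --- every flexible weight lies in $P$.

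For the converse, recall that a weight is flexible precisely when it is a retract of a PIE-weight \cite{bird1989flexible,power1991characterization}, so it is enough to exhibit, from a projective $\ob\circ W_0$, such a retract. A section of $\coprod_{(J,x)}\tc J_0(J,-)\epito\ob\circ W_0$ yields a coproduct of representables $\widetilde G$ together with an object-surjective map $\widetilde G\to W$ and a \emph{natural} assignment $s$ of preimages on objects. One builds a flexible weight $V$ from $\widetilde G$ by attaching cells: coinserters realising, for each $K$ and each morphism of $W(K)$ between the images of a pair of objects of $\widetilde G(K)$, a corresponding morphism of $V(K)$; then pushouts forcing parallel morphisms of $V(K)$ with equal image in $W(K)$ to be identified. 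The map $\widetilde G\to W$ extends to $V\to W$, which by construction is pointwise surjective-on-objects, full and faithful; and $s$ extends, using exactly the morphisms introduced at the coinserter stage, to a $2$-natural section $W\to V$, whose functoriality and naturality equations hold because $V\to W$ is faithful. Hence $W$ is a retract of $V$, so flexible.

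I expect this last construction to be the main obstacle: one must arrange the cell-attachments so that $V\to W$ is genuinely surjective, full and faithful and so that the putative section is a well-defined $2$-natural transformation, and this is exactly the point where projectivity of $\ob\circ W_0$ --- a \emph{natural} choice of preimages, rather than a merely pointwise one --- is used. Everything else, namely the reduction to projectivity and the forward implication, is routine. (One could instead organise the converse model-categorically, invoking that the flexible weights are the cofibrant objects of the projective model structure on $[\tc J,\CAT]$ induced by the folk structure on $\CAT$, but the content would be the same.)
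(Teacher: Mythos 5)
Your proof is correct, but it takes a genuinely different route from the paper: the paper's proof is essentially a citation, invoking the known characterisation of flexible weights via the double category of elements (Grandis--Par\'e, Verity) and then unwinding the definition of that double category to identify its horizontal category with $\el(\ob\circ W_0)$. You instead give a self-contained argument, whose pivot --- that condition (2) is precisely projectivity of $\ob\circ W_0$ in $[\tc J_0,\SET]$, i.e.\ being a retract of a coproduct of representables --- is accurate and, in my view, illuminating: it makes the forward implication a one-line closure argument (representables are projective; $\ob\circ(-)_0$ preserves coproducts and retracts and is insensitive to coinserters and coequifiers, since these are bijective on objects in $\CAT$; and by Albert--Kelly the saturation in (1) is the closure of the representables under exactly these colimits). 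The converse is the only place where real work remains: your cellular construction of $V$ is the standard one and does go through (the single simultaneous coequifier suffices because ``equal image in $W(K)$'' is already a congruence, so $V\to W$ comes out pointwise surjective-on-objects and fully faithful, and full faithfulness plus the natural object-level section forces a unique $2$-natural section), but you could shorten it considerably by citing that the counit $q\colon W'\to W$ of the pseudomorphism classifier is a pointwise surjective equivalence from a PIE weight, and then applying projectivity of $\ob\circ W_0$ to $\ob\circ q_0$ directly. What your approach buys is independence from the double-categorical machinery and a conceptual reading of (2) as cofibrancy detected on objects; what the paper's approach buys is brevity, at the cost of outsourcing all the content to the cited results.
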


Here $W_0 \colon \tc J_0 \to \CAT_0$ is the underlying functor of $W$, whilst $\ob \colon \CAT_0 \to \SET$ is the functor sending each small category to its set of objects. Therefore, objects of $\el(\ob \circ W_0)$ are pairs $(J \in \tc J, Y \in W(J))$, and morphisms $f \colon (J, Y) \to (J', Y')$ are 1-cells $f \colon J \to J'$ in $\tc J$ satisfying $W(f)(Y) = Y'$.

\begin{proof}
	A characterisation of flexible weights in terms of their double categories of elements is given in \cites[Theorem~3.2]{grandis2019persistent}[Corollary~2.7.2]{verity1992enriched}: in the former, a weight is flexible if and only if each connected component of the `horizontal category' underlying the double category of elements of the weight admits a naturally weak initial object. Unwinding the definition of the double category of elements~\cite[\S1.2]{grandis2019persistent} reveals its horizontal category to be precisely the category of elements of $\ob \circ W_0$.
\end{proof}

\begin{remark}
	A weight $W \colon \tc J \to \CAT$ is furthermore PIE, \ie in the saturation of the class of weights for \underline products, \underline inserters, and \underline equifiers, if and only if, for each of the weakly initial objects $I$, we have $i_I = 1_I$, which is equivalent to saying that $I$ is actually initial in the connected component~\cites[Corollary~3.3]{power1991characterization}[Corollary~2.7.3]{verity1992enriched}.
\end{remark}

\begin{definition}
	A \emph{cleavage} for a flexible weight $W \colon \tc J \to \CAT$ comprises, for each connected component of $\el(\ob \c W_0)$, a cocone over the identity functor, exhibiting a naturally weak initial object. A \emph{cloven flexible weight} is a flexible weight equipped with a cleavage.
\end{definition}

A cleavage for $W$ thus comprises a family $\Omega_W$ of triples $(J \in \tc J, Y \in W(J), i \colon \Delta((J, Y)) \tto 1)$.
Given a weighted cone $\gamma \colon W \tto \tc C(X, D{-})$, we refer to those cone projections $\gamma_{J, Y} \colon X \to D(J)$ with $(J, Y) \in \Omega_W$ as \emph{initial cone projections}.

\begin{remark}
	\label{groupoid}
	For many flexible weights $W$, each connected component of $\el(\ob \circ W_0)$ admits a \emph{unique} initial object, so that $W$ admits a unique cleavage. For instance, this is the case for products, powers, comma objects, and, more generally, for any PIE weight whose domain $\tc J$ contains no invertible 1-cells. An example of a flexible weight (in fact, a PIE weight) which does not have this property is the weight corresponding to the pseudo or lax limit of a diagram $\tc G \to \CAT$, where $\tc G$ is a groupoid viewed as locally discrete 2-category (since pseudo limits and lax limits are PIE limits, and the category of elements in this case will itself be a groupoid).
\end{remark}

The following definition extends \cite[Definition~9.3]{bourke2021accessible} from PIE-limit 2-theories to flexible-limit 2-sketches. The subtlety in the definition concerns the weak homomorphisms of models.

\begin{definition}
	\label{cloven-sketch}
	A \emph{cloven flexible-limit 2-sketch} $\sk S$ is a limit 2-sketch each of whose weights is flexible, together with a cleavage for each weight. Models of $\sk S$ in a 2-sketch $\sk C$ are simply the usual notion of model, \ie 2-functors $M \colon \tc S \to \tc C$ preserving the chosen weighted cones. For each weakness $w \in \W$, a \emph{$w$-morphism} of models $\phi \colon M \to N$ is a $w$-natural transformation such that, for each initial cone projection $\gamma_{J, Y}$, the component $\phi_{\gamma_{J, Y}}$ is the identity. Together with modifications, these form a 2-category $\Mod_w(\sk S, \sk C)$.
\end{definition}

\begin{example}
	Note that it would not be appropriate in \cref{cloven-sketch} to restrict the $w$-morphisms of models to have identity components for \emph{every} cone projection (rather than the specified initial cone projections).

	For instance, consider the colax limit $B \comma g$ of a 1-cell $g \colon A \to B$ in a 2-category $\tc C$ as recalled in \cref{2-categorical-limits}. Denoting by $\b 1 = \{ * \}$ the terminal category, the weight for a colax limit is given by the 2-functor $W \colon \b 2 \to \CAT$ whose image is the functor $(* \mapsto 1) \colon \b 1 \to \b 2$. There are three cone projections: $\pi_{0, *} \colon B \comma g \to A$ -- corresponding to the single object of $\b 1$ -- and $\pi_{1, 0} \colon B \comma g \to B$ and $\pi_{1, 1} \colon B \comma g \to B$ -- corresponding to the two objects of $\b 2$. Now suppose that $\tc C = \fc C_\lambda$ for some $\F$-category $\fc C$. Then certainly we wish $\pi_{0, *}$ and $\pi_{1, 0}$ to be tight, in accordance with \cref{enhanced-2-limits-as-2-limits} (as below left). However, note that the diagram below right commutes.
	\[
	\begin{tikzcd}
		& {B \comma g} \\
		A && B
		\arrow["{\pi_{0, *}}"', from=1-2, to=2-1]
		\arrow[""{name=0, anchor=center, inner sep=0}, "{\pi_{1, 0}}", from=1-2, to=2-3]
		\arrow["f"', squiggly, from=2-1, to=2-3]
		\arrow[shift right, shorten <=18pt, shorten >=29pt, Rightarrow, from=0, to=2-1]
	\end{tikzcd}
	\hspace{2cm}
	\begin{tikzcd}
		& {B \comma g} \\
		A && B
		\arrow["{\pi_{0, *}}"', from=1-2, to=2-1]
		\arrow["{\pi_{1, 1}}", squiggly, from=1-2, to=2-3]
		\arrow["f"', squiggly, from=2-1, to=2-3]
	\end{tikzcd}
	\]
	Consequently, we do not wish for the cone projection $\pi_{1, 1}$ to be tight, since there is no reason that the precomposition of a loose morphism by a tight morphism should be tight. This is the reason we work with cloven weights, which intrinsically identify those projections that must be tight, rather than arbitrary weights, for which it is not clear in general which projections should be tight.
\end{example}

\begin{example}
	Note that it would also not be appropriate in \cref{cloven-sketch} to restrict the $w$-morphisms of models to have identity components for the initial cone projections associated to \emph{every} cleavage simultaneously.

	For instance, let $\tc G$ be the \emph{free-standing involution}, \ie the 2-category with a single object $G$ and a single non-identity 1-cell $g \colon G \to G$ satisfying $g \c g = 1_G$. Suppose that we have a 2-functor $D \colon \tc G \to \tc C$. The pseudo limit of $D$, if it exists, comprises an object $L$ equipped with a projection $\pi \colon L \to D(G)$ and an isomorphism $\varpi \colon D(g) \c \pi \iso \pi$ such that pasting $\varpi$ with itself is the identity 2-cell on $\pi$.

	Suppose that $\tc C = \fc C_\lambda$ for some $\F$-category $\fc C$. Then we wish the unique cone projection $\pi$ to be tight, so that we have the following situation.
	\[\begin{tikzcd}
		& L \\
		{D(G)} && {D(G)}
		\arrow[""{name=0, anchor=center, inner sep=0}, "\pi"', from=1-2, to=2-1]
		\arrow[""{name=1, anchor=center, inner sep=0}, "\pi", from=1-2, to=2-3]
		\arrow["{D(g)}"', squiggly, from=2-1, to=2-3]
		\arrow["\iso"{description}, shift right=2, draw=none, from=0, to=1]
	\end{tikzcd}\]

	Since $L$ is a conical pseudo limit, it is given by the $(\Delta\b1 \colon \tc G \to \CAT)$-weighted pseudo limit of $D$. Consequently, it is equivalently the $(\Delta\b1)'$-weighted 2-limit of $D$, where $(\Delta\b1)'$ is the pseudo morphism classifier of \cite[\S1.7]{bird1984limits} (see also \cite[\S2.6]{grandis2019persistent}). Explicitly, $(\Delta\b1)'(G)$ is the codiscrete category with two objects: $(1_G, *)$ and $(g, *)$; and $(\Delta\b1)'(g)$ swaps the two objects. Recall from \cref{groupoid} that, since $G$ is a groupoid, $\el(\ob \c (\Delta\b1)'_0)$ is also a groupoid: it is the codiscrete category on two objects: $(G, (1_G, *))$ and $(G, (g, *))$.

	Explicitly, the $(\Delta\b1)'$-weighted 2-limit of $D$ comprises an object $L'$ equipped with two cone projections $\pi_1 \colon L' \to D(G)$ and $\pi_2 \colon L' \to D(G)$ (corresponding to the two objects of $(\Delta\b1)'$), and an isomorphism $\varpi' \colon \pi_1 \iso \pi_2$ (corresponding to the isomorphism between the two aforementioned objects). Furthermore, 2-naturality imposes $D(g) \c \pi_1 = \pi_2$ and $D(g) \c \pi_2 = \pi_1$ together with a pasting condition for $\varpi'$. Consequently, taking $\pi$ to be equal either to $\pi_1$ or to $\pi_2$ (the two choices are equivalent), it is evident that the universal property for $L'$ is the same as that for $L$. Accordingly, we wish to view either $\pi_1$ or $\pi_2$ as the `canonical projection' (which we take to be tight), and the other to be a `derived projection' (which we take to be loose, since it is the composite of a tight morphism with a loose morphism).
\end{example}

An obvious shortcoming with \cref{cloven-sketch} is that a cloven flexible-limit 2-sketch is not simply a 2-sketch satisfying a certain property, but rather a 2-sketch equipped with additional structure, namely the cleavages for each weight.

We shall show that each cloven flexible-limit 2-sketch may be viewed naturally as an $\F$-sketch (without additional structure), and that the 2-categories of models for a cloven flexible-limit 2-sketch are subsumed by the $\F$-categories of models for the corresponding $\F$-sketch. The main ingredient is the following construction, which shows that each cloven flexible weight gives rise to an $\F$-weight, which encodes the requirement that the chosen initial cone projections are required to be tight.

\begin{lemma}
	\label{F-weight-from-cloven-weight}
	Every cleavage $\Omega$ for a flexible weight $W \colon \tc J \to \CAT$ induces an $\F$-weight $W^\Omega \colon \inchord{\tc J} \to \fc F$ such that $(W^\Omega)_\lambda = W$. An $\F$-category $\fc S$ admits $W^\Omega$-weighted limits if and only if $\fc S_\lambda$ admits $W$-weighted limits and the initial cone projections are tight and jointly detect tightness.
\end{lemma}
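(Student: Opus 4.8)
The plan is to construct $W^\Omega$ directly and then to read off the second assertion from the characterisation of $\F$-weighted limits recalled in \cref{enhanced-2-limits-as-2-limits}. The crucial observation for the construction is that, since the only tight 1-cells of $\inchord{\tc J}$ are identities, an $\F$-functor $\inchord{\tc J} \to \fc F$ amounts to no more than a 2-functor $\tc J \to \CAT$, namely its loose part, together with an \emph{arbitrary} choice, for each $J \in \tc J$, of a full subcategory of its value at $J$: there is no further coherence to impose, because there are no non-identity tight morphisms in the domain whose images would have to be morphisms of $\F$. I would therefore take the loose part of $W^\Omega$ to be $W$ itself, and let $W^\Omega(J)_\tau$ be the full subcategory of $W(J)$ spanned by those $Y$ with $(J, Y, i) \in \Omega$ for some cocone $i$. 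Since a full-subcategory inclusion is a full embedding, each $W^\Omega(J)$ is an object of $\F$; preservation of tightness of 1-cells is automatic, as the only tight 1-cells of $\inchord{\tc J}$ are identities; and $(W^\Omega)_\lambda = W$ by construction. Unwinding the definitions of the cleavage $\Omega$ and of initial cone projection, an object $Y$ lies in $W^\Omega_\tau(J)$ precisely when $\gamma_{J, Y}$ is an initial cone projection.

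For the second statement, observe first that, for the same reason, an $\F$-functor $D \colon \inchord{\tc J} \to \fc S$ is exactly a 2-functor $D_\lambda \colon \tc J \to \fc S_\lambda$, so that ``$\fc S$ admits $W^\Omega$-weighted limits'' means precisely that $\{W^\Omega, D\}$ exists for every such $D$. I would then invoke \cref{enhanced-2-limits-as-2-limits} (\ie \cite[Proposition~3.6]{lack2012enhanced}) for the weight $W^\Omega$: the $W^\Omega$-weighted limit of $D$ exists in $\fc S$ if and only if the $W$-weighted 2-limit of $D_\lambda$ exists in $\fc S_\lambda$ and, in addition, its cone projections $\gamma_{J, Y}$ with $Y \in W^\Omega_\tau(J)$ are tight and jointly detect tightness. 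By the identification in the previous paragraph, these two extra conditions say exactly that the initial cone projections are tight and that they jointly detect tightness. Since this equivalence holds for each diagram, $\fc S$ admits all $W^\Omega$-weighted limits if and only if $\fc S_\lambda$ admits all $W$-weighted 2-limits and, for each, the initial cone projections are tight and jointly detect tightness; the right-hand conjunction is well posed because $W$-weighted 2-limits are unique up to isomorphism and both tightness of the projections and the joint-detection property are invariant under such isomorphisms.

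I expect no substantive obstacle: once one recognises that $\inchord{\tc J}$ contributes no tightness constraints, the construction of $W^\Omega$ is essentially forced, and the completeness characterisation is a direct instance of \cite[Proposition~3.6]{lack2012enhanced}. The only genuine bookkeeping is matching membership in $W^\Omega_\tau(J)$ with the notion of initial cone projection induced by the cleavage, which is a matter of unwinding definitions; flexibility of $W$ plays no role beyond guaranteeing that a cleavage $\Omega$ exists in the first place.
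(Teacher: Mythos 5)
Your proposal is correct and matches the paper's own argument: the paper likewise defines $W^\Omega$ by taking $(W^\Omega)_\lambda = W$ and letting $W^\Omega(J)_\tau$ be the full subcategory of $W(J)$ spanned by the objects appearing in $\Omega_W$, noting that the inchordate domain makes tightness-preservation automatic, and then reads off the completeness characterisation directly from \cref{enhanced-2-limits-as-2-limits}. Your additional remarks (that an $\F$-functor out of $\inchord{\tc J}$ is just a 2-functor on loose parts, and that the conditions are invariant under isomorphism of limits) are correct bookkeeping that the paper leaves implicit.
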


\begin{proof}
	The construction of $W^\Omega$ for $W$ a PIE weight is described abstractly in the proof of \cite[Theorem~6.12]{lack2012enhanced}. The proof extends with minor modifications to flexible weights: we spell out the construction. Since the domain of $W^\Omega$ is inchordate, it suffices to define a 2-functor from $\tc J$ to the underlying 2-category of $\fc F$. For $J \in \tc J$, the full inclusion $W^\Omega(J) \in \fc F$ is given by the full subcategory inclusion of $W(J)$ spanned by those objects $Y$ such that $(J, Y) \in \Omega_W$. The action of $W^\Omega$ on 1-cells and 2-cells is given by that of $W$. By definition, this choice satisfies $(W^\Omega)_\lambda = W$. The characterisation of completeness under $W^\Omega$-weighted limits follows directly from the characterisation of general $\F$-limits in \cref{enhanced-2-limits-as-2-limits}.
\end{proof}

\begin{remark}
	\label{non-isomorphic-naturally-initial-objects}
	In the case that a flexible weight admits two cleavages, the two corresponding  $\F$-weights need not be isomorphic.  For instance, let $\tc J$ be the free-standing section--retraction pair, as below. Observe that $A$ is naturally weakly initial and $B$ is initial, so that the constant weight $\Delta\b1 \colon \tc J \to \CAT$ is flexible by \cref{flexible-weight-characterisation}, since $\el(\ob \c W_0) \iso \tc J_0$. The $\Delta\b1$-weighted limit of a 2-functor $D$ is simply given by evaluation at the initial object $B$.
	\[\begin{tikzcd}
		A & B
		\arrow["sr", from=1-1, to=1-1, loop, in=145, out=215, distance=10mm]
		\arrow["r", shift left=2, from=1-1, to=1-2]
		\arrow["s", shift left=2, from=1-2, to=1-1]
	\end{tikzcd}\]
	There are two choices of cleavage associated with $\Delta\b1$, given by $(A, *)$ and $(B, *)$ respectively. Furthermore, the two associated $\F$-weights induced by \cref{F-weight-from-cloven-weight} are clearly not isomorphic: a limit weighted by the former has a strict projection $DB \to DA$, whereas the latter does not.
\end{remark}

With \cref{F-weight-from-cloven-weight} in place, we can now describe the $\F$-sketch associated to a cloven flexible-limit 2-sketch. Just as for algebraic 2-theories (which are particular instances of cloven flexible-limit 2-sketches), we make use of the free limit $\F$-sketch construction of \cref{free-F-sketch} (though, in contrast to the settings of \cref{examples-of-cofree-F-sketches} and \cref{algebraic-2-theories}, the $\F$-weights induced by \cref{F-weight-from-cloven-weight} will not in general be tight.)

\begin{theorem}
	\label{model-comparison-for-cloven-sketches}
	Let $\sk S$ be a cloven flexible-limit 2-sketch, and denote by $\Psi \defeq \{ W^\Omega \mid W \in \sk S \}$ the class of $\F$-weights induced by the cleavages via \cref{F-weight-from-cloven-weight}. For each weakness $w \in \W$ and each limit 2-sketch $\sk C$, there is a 2-natural isomorphism of 2-categories:
	\[\Mod_w(\sk S, \sk C) \iso (\FMod_w(\sk S^\Psi, \chord{\sk C}))_\lambda\]
	where we denote by $\sk S^\Psi$ the free $\Psi$-limit $\F$-sketch associated to the $\Psi_\lambda$-limit 2-sketch underlying $\sk S$.
\end{theorem}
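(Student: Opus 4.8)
The plan is to follow the strategy of the proof of \cref{models-of-algebraic-2-theory-are-models-of-F-sketch}, of which this theorem is a direct generalisation: we unwind the explicit description of $\FMod_w(\sk S^\Psi, \chord{\sk C})$ furnished by \cref{models-of-free-F-sketches} and check that it agrees, on objects, $1$-cells, and $2$-cells, with the $2$-category $\Mod_w(\sk S, \sk C)$ of \cref{cloven-sketch}. First I would record that the hypotheses of \cref{free-F-sketch} are met: since $(W^\Omega)_\lambda = W$ for each weight $W$ of $\sk S$ by \cref{F-weight-from-cloven-weight}, we have $\Psi_\lambda = \{ W \mid W \in \sk S \}$, so the $2$-sketch underlying $\sk S$ is indeed a $\Psi_\lambda$-limit $2$-sketch and $\sk S^\Psi$ is well-defined.

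The step I expect to be the crux — and the only one carrying real content — is the identification of the tight morphisms of $\sk S^\Psi$. By the construction in the proof of \cref{free-F-sketch}, the tight morphisms of $\sk S^\Psi$ are generated under identities and composition by the \emph{tight diagram morphisms} and \emph{tight cone projections} of the weights in $\Psi$. Here one uses crucially that, by \cref{F-weight-from-cloven-weight}, each $W^\Omega$ has \emph{inchordate} domain $\inchord{\tc J}$: its only tight morphisms are identities, so there are no nontrivial tight diagram morphisms; and $(W^\Omega)_\tau(J)$ is the full subcategory of $W(J)$ on the objects $Y$ with $(J, Y) \in \Omega_W$, so the tight cone projections are exactly the \emph{initial cone projections} $\gamma_{J, Y}$ of \cref{cloven-sketch}. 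Consequently the tight morphisms of $\sk S^\Psi$ are precisely the subcategory generated by the initial cone projections.

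With this settled, the rest is bookkeeping. Applying \cref{models-of-free-F-sketches} with $w' = s$ (recall $\FMod_w = \FMod_{s, w}$) and codomain $\chord{\sk C}$: since every morphism of $\chord{\sk C}$ is tight, the tightness condition on objects is vacuous, so the objects are exactly the models of $\sk S$ in $\sk C$; a loose morphism $M \lto N$ is a $w$-natural transformation whose $2$-cell components at the generating tight morphisms are identities, and since a pasting of identity $2$-cells is an identity this is equivalent to its components at all tight morphisms being identities, hence — by the previous paragraph — to its components at all initial cone projections being identities, which is precisely the defining condition of a $w$-morphism in \cref{cloven-sketch}; and $2$-cells on both sides are modifications. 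Thus the identity-on-underlying-data assignment is an isomorphism of $2$-categories, and, being the identity on underlying data, is visibly $2$-natural in $\sk C$. I expect no genuine obstacle: the substance has been pre-packaged into \cref{free-F-sketch,F-weight-from-cloven-weight,models-of-free-F-sketches}, and the delicate point that a cleavage is structure rather than a property is exactly what is absorbed by building $\Psi$ from the chosen cleavages, as \cref{non-isomorphic-naturally-initial-objects} illustrates.
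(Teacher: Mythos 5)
Your proposal is correct and takes essentially the same route as the paper's proof, which likewise reduces everything to the description of models in \cref{models-of-free-F-sketches} together with the two observations that the weights in $\Psi$ have inchordate domains (so there are no nontrivial tight diagram morphisms) and that, by construction of $\Psi$, the tight cone projections are precisely the initial cone projections. The extra bookkeeping you supply (well-definedness of $\sk S^\Psi$, vacuity of the tightness conditions for the chordate codomain, and the passage from generators to all tight morphisms) merely expands the paper's one-line argument.
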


\begin{proof}
	Follows immediately from the description of the models for $\sk S^\Psi$ in \cref{models-of-free-F-sketches}, noting that (1) there are no nontrivial tight diagram morphisms, since every weight has inchordate domain; and that (2) by construction of $\Psi$, the tight cone projections are precisely the initial cone projections associated to $\sk S$.
\end{proof}

Consequently, the study of cloven flexible-limit 2-sketches and their 2-categories of models, whose definitions are rather complex, are naturally subsumed by the study of $\F$-sketches and their $\F$-categories of models.

We conclude by discussing the example of the cloven flexible-limit 2-sketch for regular categories anticipated in \cref{regular-category}.

\begin{example}
	\label{regular-category-sketch}
	As indicated in \cref{regular-category}, regular categories are the models for a PIE-limit 2-sketch $\sk R$, the full details for which may be extracted straightforwardly from the description in \cite[\S6.5]{bourke2021accessible}. The weighted limits involved are powers, comma objects and inverters, and by \cref{groupoid}, each of these admits a unique cleavage.  Therefore by \cref{model-comparison-for-cloven-sketches} we obtain a canonical enhancement of $\sk R$ to an $\F$-sketch $\sk R^\Psi$. Our intention here is to indicate which morphisms in $\sk R$ become tight in the associated $\F$-sketch $\sk R^\Psi$, to illustrate a particular subtlety.

	Before giving the details, we may make an educated guess as to which morphisms should be tight.  Intuitively, in the diagram below,
	\[\begin{tikzcd}
		& {\ct X} \\
		{\ct X^\to} && {\ct X^\rightrightarrows}
		\arrow[""{name=0, anchor=center, inner sep=0}, "\Delta", from=1-2, to=2-3]
		\arrow[""{name=1, anchor=center, inner sep=0}, "\coeq", squiggly, from=2-1, to=1-2]
		\arrow["\kerp"', squiggly, hook, from=2-1, to=2-3]
		\arrow["\dashv"{anchor=center}, shift right=2, draw=none, from=1, to=0]
	\end{tikzcd}\]
	one might expect the morphisms $\coeq$ and $\kerp$ to be loose, since regular functors commute with these only up to coherent isomorphism, and the diagonal $\Delta$ to be tight, since regular functors commute with diagonals strictly.

	However this does not quite match the reality.  Consider the part of the $\F$-sketch $\sk R^\Psi$ depicted below.
	\[\begin{tikzcd}
		{\kerp \comma \Delta} & {\ct X} \\
		{\ct X^\to} & {\ct X^\rightrightarrows}
		\arrow["{{\pi_2}}", from=1-1, to=1-2]
		\arrow[""{name=0, anchor=center, inner sep=0}, "{{\pi_1}}", from=1-1, to=2-1]
		\arrow["\Delta", squiggly, from=1-2, to=2-2]
		\arrow[""{name=1, anchor=center, inner sep=0}, "\ell", shift left=5, squiggly, from=2-1, to=1-1]
		\arrow[shorten <=10pt, shorten >=10pt, Rightarrow, from=2-1, to=1-2]
		\arrow["\kerp"', squiggly, from=2-1, to=2-2]
		\arrow["\dashv"{anchor=center}, draw=none, from=1, to=0]
	\end{tikzcd}\]
	Here the comma object projections $\pi_1$ and $\pi_2$ are tight.  As anticipated, the kernel pair functor $\kerp \colon \ct X^\to \rightsquigarrow \ct X^\rightrightarrows$ is loose. The left adjoint $\ell$ is also loose and hence, as anticipated, the composite $\coeq \defeq \pi_2 \circ \ell \colon \ct X^\to \rightsquigarrow X$ is loose.

	However, the diagonal $\Delta$ is also loose. Whilst at first this may appear unexpected, note that, with respect to models in $\Psi$-complete $\F$-categories, $\Delta$ is treated as though it were tight. In particular, if $M$ is a model of $\sk R^\Psi$ in a $\Psi$-complete $\F$-category, the morphism $M(\Delta)$ will in fact be tight. This follows from the fact that each of its composites with the two tight power projections $X^\rightrightarrows \to X$ are tight (see \cref{enhanced-2-limits-as-2-limits}). Similarly, if $\phi \colon M \to N$ is a $w$-morphism of $\sk R^\Psi$-models in a $\Psi$-complete $\F$-category, the 2-cell component $\phi_\Delta$ will be an identity 2-cell. Abstractly, this behaviour amounts to the fact that $\Delta$ will be tight in the free $\F$-theory on the $\F$-sketch $\sk R^\Psi$ (\cf~\cite[\S6.4]{kelly1982basic}). This is an instance of a general pattern: morphisms that intuitively correspond to mediating morphisms into a limit are not required to be tight in an $\F$-sketch, even though they may be forced to be tight in models.
\end{example}

\section{Future directions}
\label{future-directions}

There are many potential directions in which the theory of enhanced 2-sketches could be taken. We mention a few.
\begin{itemize}
	\item In the one-dimensional setting, the category of models of a limit sketch is locally presentable~\cite[Proposition~1.52]{adamek1994locally}; a similar fact holds for enriched limit sketches~\cite[Proposition~10.4]{kelly1982structures}. However, the theory of local presentability for enriched limit sketches concerns enriched categories of models and their \emph{strict} morphisms; in the two-dimensional setting, we are typically interested in the weaker notions of morphism between models. It remains to be seen whether one may analogously characterise the $\F$-categories of models of $\F$-sketches and their pseudo/lax/colax morphisms. This would refine the theory of locally bipresentable 2-categories developed by \textcite{di2022biaccessible}.
	\item Along similar lines, in \cite[Theorem~9.4]{bourke2021accessible}, it was conjectured that the 2-category of models and pseudo morphisms of any appropriately nice PIE-limit 2-theory would be accessible as a 2-category, and admit certain limits and colimits (\cf~\cite[Definitions~4.1 \& 4.5]{bourke2021accessible}). The proof of this conjecture was deferred until a theory of limit $\F$-theories had been developed.
	\item We have not investigated the construction of the free $\F$-theory generated by an $\F$-sketch: such a construction exists by \cite[\S6.4]{kelly1982basic}, but it remains to be seen how the construction interacts with weak morphisms. This is closely related to the theory of local presentability and duality for $\F$-categories mentioned above.
	\item An open problem is to obtain a deeper understanding of limits in $\F$-categories of models and their $(w', w)$-morphisms, which we expect to be closely connected to the main results of \cite{lack2012enhanced}. The dotted limits of \cite{ko2023dotted} may be a useful tool for this pursuit.
	\item It is likely that there are interesting examples of mixed $\F$-sketches, combining \mbox{$\F$-categorical} limits and colimits, whose models would correspond to a two-dimensional analogue of accessible categories (\cf~\cite[Corollary~2.61]{adamek1994locally}).
	\item The theory of enhanced 2-sketches extends in many cases the theory of \mbox{2-monads} (and, more generally, of $\F$-monads). As such, many of the topics studied in two-dimensional monad theory may be studied in the context of enhanced \mbox{2-sketches}, for instance (co)lax models, weak morphism classifiers, flexibility, and doctrinal adjunction.
	\item The double categorical development of \cite{lambert2024cartesian} likely admits a refinement to a notion of enhanced double category and enhanced double sketch.
	\item $\F$-categories capture 2-categorical structures in which there are two natural notions of morphism, one of which is stricter than the other. However, in many examples, the structure formed by the morphisms is more complex: for instance, monoidal categories have at least four natural notions of morphism: strict, pseudo, lax, and colax (more if one includes notions such as normal monoidal functors). By considering this extra structure, it is possible to sharpen some of our results, such as allowing \cref{Mod-skew-symmetry} to be parameterised by four weaknesses rather than two. This would permit, for instance, the consideration of lax monoidal pseudo double functors between monoidal double categories. This requires generalisation from $\F$-categories to more expressive structures, as also suggested in \cite[\S1]{lack2012enhanced}.
\end{itemize}

\appendix

\section{Creation of certain enhanced 2-limits}
\label{creation-of-certain-F-limits}

In this appendix, we prove a technical result regarding the creation of certain \mbox{$\F$-categorical} limits needed in \cref{multiple-perspectives}. To do so, we make use of the formalism of \emph{marked lax limits}. These are of the same expressive power as $\CAT$-weighted limits and can be more convenient. In particular, this is the case in the proof of \cref{created}.

We begin by recalling the notion of marked lax limit. Let $\fc J$ be an $\F$-category and $\tc C$ be a 2-category. Denote by $\Delta \colon \tc C \to \FFun_{s, l}(\fc J, \chord{\tc C})_\lambda$ the diagonal 2-functor into the 2-category of $\F$-functors $\fc J \to \chord{\tc C}$ (these are equivalently 2-functors $\fc J_\lambda \to \tc C$, as recalled in \cref{chord-and-inchord}) and loose $(s, l)$-natural transformations (\cref{functor-F-category}). The \emph{marked lax limit} of a 2-functor $D \colon \fc J_\lambda \to \tc C$ is given by a partial right adjoint to $\Delta$, \ie an object $\mlim D \in \tc C$ equipped with a universal loose $(s, l)$-natural transformation $\pi \colon \Delta(\mlim D) \tto D$, exhibiting a 2-natural isomorphism as follows.
\begin{equation}
	\tc C(C, \mlim D) \iso \FFun_{s, l}(\fc J, \chord{\tc C})_\lambda(\Delta C, D)
\end{equation}

In the case that $W \colon \tc J \to \CAT$ is a weight, one can construct an $\F$-category of elements $\el(W)$ over which marked lax limits have the same universal property as $W$-weighted limits: in particular, existence and preservation of weighted limits can be understood in terms of marked lax limits. For further details, we refer the reader to \cite{ko2023dotted} and the references therein.

\begin{remark}
	The concept of a marked lax limit has appeared under different names; see \cite{ko2023dotted} for an account of the history of the concept. In particular, in this context \mbox{$\F$-categories} are often referred to as \emph{marked 2-categories}; and the morphisms of $\FFun_{s, l}(\fc J, \chord{\tc C})_\lambda$ are referred to as \emph{marked lax natural transformations}.
\end{remark}

\begin{proposition}\label{created}
	All limits are $(s, s)$-created. Furthermore, tight limits are $(s, w)$-created for all weaknesses $w \in \W$.
\end{proposition}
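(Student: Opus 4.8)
The statement splits into two independent claims, which I would establish in turn.

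For the first claim, that all limits are $(s, s)$-created, observe that $\FMod_{s, s}(\sk S, \fc C) = \FMod(\sk S, \fc C)$ is the ordinary $\F$-enriched category of models of \cref{model}, realised as a full sub-$\F$-category of the functor $\F$-category $[\fc S, \fc C]$. The plan is to invoke the standard fact that weighted limits in enriched functor categories are computed pointwise~\cite[\S3.3]{kelly1982basic}, so that $[\fc S, \fc C]$ is $\Psi$-complete whenever $\fc C$ is, with each evaluation $\F$-functor $\ph(S)$ preserving and jointly reflecting $\Psi$-limits, and then to check that this pointwise construction restricts to the full sub-$\F$-category of models. Given a diagram $D$ into $\FMod(\sk S, \fc C)$ and a weight $W \in \Psi$, the pointwise limit $L$, defined by $L(S) \defeq \{ W, \ph(S) \circ D \}$, is again a model: for each specified weighted cone $(V, E, X, \gamma)$ of $\sk S$ with $V \in \Psi$, commutativity of weighted limits together with the hypothesis that each component of $D$ is a model yields $\{ V, L \circ E \} \iso \{ W, \ph(X) \circ D \} = L(X)$. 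Full faithfulness of $\FMod(\sk S, \fc C) \hookrightarrow [\fc S, \fc C]$ then transports the universal property, and joint reflection is immediate from the pointwise description.

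For the second claim, that tight limits are $(s, w)$-created, the work lies in handling the weak morphisms when $w \neq s$. Recalling from \cref{tight-limits,enhanced-2-limits-as-2-limits} that a tight limit is weighted by a tight weight $\chord W \colon \chord{\tc J} \to \fc F$ and amounts to a $W$-weighted $2$-limit of the underlying diagram of loose morphisms whose cone projections are all tight and jointly detect tightness, I would first prove the statement with the functor $\F$-category $\FFun_{s, w}(\fc S, \fc C)$ in place of $\FMod_{s, w}(\sk S, \fc C)$, and then cut down to models exactly as in the first claim: the pointwise tight limit of a diagram of models is a model by commutativity of weighted limits, the inclusion $\FMod_{s, w}(\sk S, \fc C) \hookrightarrow \FFun_{s, w}(\fc S, \fc C)$ is full and faithful, and the evaluation $\F$-functors preserve and jointly reflect tight limits by construction.

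The crux is therefore the computation of tight limits in $\FFun_{s, w}(\fc S, \fc C)$. Here I would pass to the formalism of marked lax limits recalled above: these have the same expressive power as $\CAT$-weighted limits, but package the one- and two-dimensional universal properties together with the tightness markings into a single adjunction, which makes it far easier to keep track of which morphisms and $2$-cells are forced to be tight. Concretely, given a diagram $D \colon \chord{\tc J} \to \FFun_{s, w}(\fc S, \fc C)$ of $\F$-functors and tight (\ie strictly $2$-natural) transformations, I would set $L(S) \defeq \{ \chord W, \ph(S) \circ D \}$ pointwise, observe that $L$ is an $\F$-functor $\fc S \to \fc C$ — for tight $s$ in $\fc S$ the induced comparison $L(s)$ is tight, since its composites with the tight cone projections are tight and these jointly detect tightness — and then transpose across the marked-lax-limit universal property componentwise to identify the hom-object $\FFun_{s, w}(\fc S, \fc C)(M, L)$ of $\F$ — on loose morphisms, on tight morphisms, and on $2$-cells — with the $\chord W$-weighted limit of the diagram $\FFun_{s, w}(\fc S, \fc C)(M, D{-})$. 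The delicate point, which I expect to be the main obstacle, is checking that the two tightness constraints defining $\FFun_{s, w}$ survive this transposition: that the naturality $w$-cell $\phi_s$ of a loose morphism $\phi \colon M \tto L$ is an identity whenever $s$ is tight precisely when the corresponding $2$-cells of the transposed cone are, and that $\phi$ has tight components precisely when the transposed cone does. Both follow from the enhanced two-dimensional universal property of \cref{enhanced-2-limits-as-2-limits}, which makes the tight cone projections of $L(S)$ jointly detect tightness of morphisms into $L(S)$ and jointly reflect identities among $2$-cells into $L(S)$. With this established, the descent to $\FMod_{s, w}(\sk S, \fc C)$ described above, and the verification that the evaluations preserve and jointly reflect, are routine.
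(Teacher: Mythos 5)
Your proposal is correct and follows essentially the same route as the paper's proof: both parts reduce to pointwise limits in the enriched functor category (using commutation of weighted limits to cut down to models), and the $(s,w)$ case is settled by transposing through the marked lax limit universal property componentwise, with joint reflection of identity 2-cells by the tight cone projections forcing the naturality constraints on $w$-cells, and joint detection of tightness handling the tight components. The only detail you leave implicit, which the paper notes in one line, is that the $w=p$ case additionally uses that the cone projections jointly reflect invertibility of 2-cells.
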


\begin{proof}
	The first part is straightforward and holds over a general base of enrichment. Indeed, $\FMod_s(\sk S, \sk C) \hookrightarrow [\fc S, \fc C]$ is the full sub-$\F$-category of the functor $\F$-category containing the models. Now, by the pointwise nature of limits in enriched functor categories~\cite[\S3.3]{kelly1982basic}, for each $\F$-weight $W$, the $\F$-category $[\fc S, \fc C]$ admits $W$-weighted limits if $\fc C$ does and the evaluation $\F$-functors $\ph(S) \colon [\fc S, \fc C] \to \fc C$ preserve and jointly reflect them. Therefore, it suffices to show that $\FMod_s(\sk S, \sk C) \hookrightarrow [\fc S, \fc C]$ is closed under $W$-weighted limits, which holds since weighted limits commute with weighted limits~\cite[(3.22)]{kelly1982basic}.

	For the second part, observe that the following triangle of $\F$-functors commutes, for each $S \in \fc S$.
	\[\begin{tikzcd}
		{\FMod_s(\sk S, \sk C)} && {\FMod_w(\sk S, \sk C)} \\
		& {\fc C}
		\arrow[hook, from=1-1, to=1-3]
		\arrow["{\ph(S)}"', from=1-1, to=2-2]
		\arrow["{\ph(S)}", from=1-3, to=2-2]
	\end{tikzcd}\]
	Let $W \colon \tc J \to \CAT$ be a weight, and denote by $\chord{W} \colon \chord{\tc J} \to \fc F$ the associated $\F$-weight (\cref{tight-limits}). Since $\chord{\tc J}$ is chordate, each diagram $D \colon \chord{\tc J} \to \FMod_w(\sk S, \sk C)$ factors through $\FMod_s(\sk S, \sk C) \hookrightarrow \FMod_w(\sk S, \sk C)$. Since all limits are $(s, s)$-created, to show that a \mbox{$\chord{W}$-weighted} limit of $D$ is $(s, w)$-created, it is necessary and sufficient to show that its limit in $\FMod_s(\sk S, \sk C)$ is preserved by the inclusion $\FMod_s(\sk S, \sk C) \ffto \FMod_w(\sk S, \sk C)$. Since the inclusion is the identity on tight morphisms, this is equally to say that the inclusion \mbox{2-functor} $\FMod_s(\sk S, \sk C)_{\tau} = \FMod_w(\sk S, \sk C)_{\tau} \hookrightarrow \FMod_w(\sk S, \sk C)_{\lambda}$ preserves $W$-weighted limits, which will follow immediately if we can show that $\FFun_w(\fc S,\fc C)_\tau \hookrightarrow \FFun_w(\fc S,\fc C)_\lambda$ preserves them. In summary, it suffices to show that if $W$ is a weight such that $\fc C_{\tau}$ admits $W$-weighted limits and these are preserved by $\fc C_{\tau} \hookrightarrow C_{\lambda}$, then $\FFun_s(\fc S, \fc C)_\tau \hookrightarrow \FFun_w(\fc S,\fc C)_\lambda$ also preserves them.

	We first treat the case $w = l$, before dealing with the other two cases. Let $\fc J \defeq \el(W)$ be the $\F$-category of elements corresponding to $W$ and let $D \colon \fc J_\lambda \to \FFun_s(\fc S, \fc C)_\tau$ be a 2-functor admitting a marked lax limit $\pi \colon\Delta \mlim D \tto D$.
	Denoting by $J \colon \FFun_s(\fc S, \fc C)_\tau \hookrightarrow \FFun_l(\fc S,\fc C)_\lambda$ the inclusion, consider a loose $(s, l)$-natural transformation $\theta \colon \Delta F \tto JD$, so that its components are morphisms of $\FFun_l(\sk S, \sk C)_\lambda$.
	Applying the evaluation $\F$-functor at each $S \in \fc S$ gives the components of a loose \mbox{$(s, l)$-natural} transformation as below left. Therefore, the universal property of the marked lax limit $(\mlim F)(S)$ in $\fc C_{\lambda}$ induces a unique loose morphism $\tp\theta(S) \colon F(S) \rightsquigarrow (\mlim D)(S)$ satisfying the following equation.
	\[
	\begin{tikzcd}
		&& {DA(S)} \\
		{F(S)} \\
		&& {DB(S)}
		\arrow["{Df(S)}", from=1-3, to=3-3]
		\arrow["{\theta_A(S)}", squiggly, from=2-1, to=1-3]
		\arrow[""{name=0, anchor=center, inner sep=0}, "{\theta_B(S)}"', squiggly, from=2-1, to=3-3]
		\arrow["{\theta_f(S)}"{description}, shorten <=7pt, shorten >=7pt, Rightarrow, from=1-3, to=0]
	\end{tikzcd}
	\quad = \quad
	\begin{tikzcd}
		&&& {DA(S)} \\
		{F(S)} & {(\mlim D)(S)} \\
		&&& {DB(S)}
		\arrow["{Df(S)}", from=1-4, to=3-4]
		\arrow["{\tp\theta(S)}", squiggly, from=2-1, to=2-2]
		\arrow["{\pi_A(S)}", from=2-2, to=1-4]
		\arrow[""{name=0, anchor=center, inner sep=0}, "{\pi_B(S)}"', from=2-2, to=3-4]
		\arrow["{\pi_f(S)}"{description}, shorten <=8pt, shorten >=8pt, Rightarrow, from=1-4, to=0]
	\end{tikzcd}
	\]
	For each loose morphism $s \colon S \lto S'$ in $\fc S$, the two-dimensional universal property of $(\mlim D)(S')$ induces a unique 2-cell $\tp\theta(s)$ satisfying the following equation.
	\[
	\begin{tikzcd}[sep=large]
		{F(S)} & {DA(S)} \\
		{F(S')} & {DA(S')}
		\arrow["{\theta_A(S)}", squiggly, from=1-1, to=1-2]
		\arrow["{F(s)}"', squiggly, from=1-1, to=2-1]
		\arrow["{\theta_A(s)}"{description}, shorten <=4pt, shorten >=4pt, Rightarrow, from=1-2, to=2-1]
		\arrow["{DA(s)}", squiggly, from=1-2, to=2-2]
		\arrow["{\theta_A(S')}"', squiggly, from=2-1, to=2-2]
	\end{tikzcd}
	\quad = \quad
	\begin{tikzcd}[sep=large]
		{F(S)} & {(\mlim D)(S)} & {DA(S)} \\
		{F(S')} & {(\mlim D)(S')} & {DA(S')}
		\arrow["{\tp\theta(S)}", squiggly, from=1-1, to=1-2]
		\arrow["{F(s)}"', squiggly, from=1-1, to=2-1]
		\arrow["{\pi_A(S)}", from=1-2, to=1-3]
		\arrow["{\tp\theta(s)}"{description}, shorten <=8pt, shorten >=8pt, Rightarrow, from=1-2, to=2-1]
		\arrow["{(\mlim D)(s)}"{description}, squiggly, from=1-2, to=2-2]
		\arrow["{DA(s)}", from=1-3, to=2-3]
		\arrow["{\tp\theta(S')}"', squiggly, from=2-1, to=2-2]
		\arrow["{\pi_A(S')}"', from=2-2, to=2-3]
	\end{tikzcd}
	\]
	The lax naturality of $\tp\theta(s)$ follows from the defining equality above. Moreover, if $s$ is tight, then, since $\theta_A$ is a loose lax natural transformation, each 2-cell $\theta_A(s)$ is the identity. Now, since the cone projections $\pi_A(S)$ jointly reflect identity 2-cells, $\tp\theta(s)$ is also the identity, so that $\tp\theta \colon F \rightsquigarrow \mlim D$ is a morphism of $\FMod_l(\sk S, \sk C)$ such that $\pi \circ \Delta\tp\theta = \theta$. Moreover, by construction, $\tp\theta$ is the unique morphism giving such a factorisation. The 2-dimensional universal property of the marked lax limit is easily verified.

	For the case $w = p$, one simply observes that, since the cone projections $\pi_A(S)$ jointly reflect invertibility of 2-cells, $\tp\theta(s)$ will be invertible if each $\theta_A(s)$ is. In the case $w = c$, the directions of $\tp\theta(s)$ and $\theta_A(s)$ are reversed, but everything else remains the same.
\end{proof}

\printbibliography

\end{document}